\newcommand{\executeiffilenewer}[3]{%
 \ifnum\pdfstrcmp{\pdffilemoddate{#1}}%
 {\pdffilemoddate{#2}}>0%
 {\immediate\write18{#3}}\fi%
}
\newcommand{%
 \executeiffilenewer{.svg}{.pdf}%
 {inkscape -z -D --file=.svg %
 --export-pdf=.pdf --export-latex}%
 \input{.pdf_tex}%
}[1]{%
 \executeiffilenewer{#1.svg}{#1.pdf}%
 {inkscape -z -D --file=#1.svg %
 --export-pdf=#1.pdf --export-latex}%
 \input{#1.pdf_tex}%
}
\newtheorem{theorem}{Theorem}[section]
\newtheorem{proposition}[theorem]{Proposition}
\newtheorem{lemma}[theorem]{Lemma}
\newtheorem{observation}[theorem]{Observation}
\newtheorem{conj}[theorem]{Conjecture}
\theoremstyle{remark}
\newtheorem{remark}{Remark}[section]
\newtheorem{definition}{Definition}[section]
\newtheorem*{theorem*}{Theorem}
\theoremstyle{plain}      
\newcommand{\Z}{\mathbb{Z}}
\newcommand{\R}{\mathbb{R}}
\newcommand{\N}{\mathbb{N}}
\newcommand{\C}{\mathbb{C}}
\newcommand{\HH}{\mathbb{H}^2}
\newcommand{\HHtrois}{\mathbb{H}^3}
\renewcommand{\hom}{\operatorname{Hom}}
\newcommand{\mcg}{\operatorname{Mod}}
\newcommand{\tr}{\operatorname{Tr}}
\newcommand{\eu}{\operatorname{eu}}
\newcommand{\ad}{\operatorname{Ad}}
\newcommand{\PSL}{\mathrm{PSL}_2(\mathbb{R})}
\newcommand{\PSLC}{\mathrm{PSL}_2(\mathbb{C})}
\newcommand{\PSLtild}{\widetilde{\mathrm{PSL}}_2(\mathbb{R})}
\newcommand{\SLdeuxR}{\mathrm{SL}_2(\mathbb{R})}
\newcommand{\sldeux}{\mathrm{sl}_2(\mathbb{R})}
\newcommand{\pu}{\mathrm{pu}}
\newcommand{\PU}{\mathrm{PU}}
\newcommand{\Span}{\operatorname{Span}}
\newcommand{\Aut}{\operatorname{Aut}}
\newcommand{\Out}{\operatorname{Out}}
\newcommand{\Inn}{\operatorname{Inn}}
\newcommand{\Hom}{\operatorname{Hom}}
\newcommand{\Sp}{\operatorname{Sp}}
\newcommand{\Sex}{\operatorname{Sex}}
\newcommand{\re}{\operatorname{Re}}
\newcommand{\im}{\operatorname{Im}}
\newcommand{\ModSo}{\operatorname{Mod}(S_o)}
\newcommand{\XX}{X^\times}
\newcommand{\Diff}{\operatorname{Diff}}
\newcommand{\croix}{\times}
\title[Hourglass representations]{Six-point configurations in the hyperbolic plane and ergodicity of the mapping class group}
\author{Julien March\'e}
\address{Sorbonne Universit\'es, UPMC Univ.\ Paris 06, Institut de Math\'ematiques de Jussieu-Paris Rive Gauche,
UMR 7586, CNRS, Univ.\ Paris Diderot, Sorbonne Paris Cit\'e, 75005 Paris, France}
\email{julien.marche@imj-prg.fr}
\author{Maxime Wolff}
\address{Sorbonne Universit\'es, UPMC Univ.\ Paris 06, Institut de Math\'ematiques de Jussieu-Paris Rive Gauche,
UMR 7586, CNRS, Univ.\ Paris Diderot, Sorbonne Paris Cit\'e, 75005 Paris, France}
\email{maxime.wolff@imj-prg.fr}
\begin{document}

\begin{abstract}
Let $X$ be the space of isometry classes of ordered sextuples of points in the hyperbolic plane such that the product of the six corresponding rotations of angle $\pi$ is the identity. This space $X$ is closely related to the PSL$_2(\mathbb{R})$-character variety of the genus 2 surface $\Sigma$. In this article we study the topology and the natural symplectic structure on $X$, and we describe the action of the mapping class group of $\Sigma$ on $X$. This completes the classification of the ergodic components of the character variety in genus 2 initiated in \cite{Mod2}. 
\vspace{0.2cm}
MSC Classification: 58D29, 57M05, 20H10, 30F60. 
\end{abstract}

\maketitle
\section{Introduction and statements}

\subsection{A simple dynamical system}

The hyperbolic plane $\HH$ is naturally identified to the subspace
of $\PSL$ consisting of matrices (up to sign) of trace $0$, via the
map which associates to a point $x\in\HH$ the rotation $s_x$ of angle $\pi$.

In this article we will be studying the following space of
{\em configurations of sextuples},
\[ \Sex = \left\lbrace
(x_1,\ldots,x_6)\in\left(\HH\right)^6\,|\, s_{x_6}\cdots s_{x_1}=1
\right\rbrace, \]
and its quotient $X=\Sex/\PSL$ by the natural diagonal action.

Given a configuration $(x_1,\ldots,x_6)\in\Sex$, and $i\in\{1,\ldots,6\}$,
we may perform a {\em leapfrog move} $L_i$, which consists in replacing
$x_i$ with $x_{i+1}$ (in cyclic notation) and $x_{i+1}$ with
$s_{x_{i+1}}(x_i)$. In this move, $x_i$ ``comes to $x_{i+1}$ and pushes it
by the same motion'': this evokes the leapfrog game played by children,
although it may be more accurate to think of a move in Chinese checkers.
This move leaves invariant the four other points, as well as the
product $s_{x_{i+1}}s_{x_i}$, hence preserves $\Sex$.
We will denote by $\ModSo$ the group generated by these leapfrog moves;
this notation will become clear later.

As a simple example of configuration $(x_1,\ldots,x_6)\in\Sex$ we may
choose $x_1$, $x_3$ and $x_5$ arbitrarily and set $x_2=x_1$, $x_4=x_3$
and $x_6=x_5$. Such a configuration, as well as all the elements of their
$\ModSo$-orbits, will be called {\em pinched} configurations.
If moreover $x_1=x_3=x_5$ we call it a {\em singular configuration}:
these configurations yield the only singular point of $X$. We will
denote by $X^*$ the space of isometry classes of non-singular
configurations, and by $\XX$ the space of
isometry classes of configurations of six points which do not lie
in the same geodesic line in~$\HH$.

It will be a simple observation that $X$ has three connected components;
we will denote by $X_0$ the one containing the isometry class of singular
configurations. We will prove the following statements:

\begin{theorem}\label{SexMinimal}
  Let $x\in X_0$ be an isometry class of non-pinched configurations.
  There is a sequence $(\gamma_n)_{n\geq 0}$ in $\ModSo$
  such that the sequence $([\gamma_n\cdot x])_{n\geq 0}$ converges
  to the isometry class of the singular configurations. The
  sequence is provided by a geometric algorithm.
  \end{theorem}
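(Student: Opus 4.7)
The plan is to introduce a non-negative continuous complexity function $\Phi\colon X_0 \to [0,\infty)$ whose zero locus is exactly the $\PSL$-orbit of the singular configuration, and then at each non-singular non-pinched point of $X_0$ to prescribe a leapfrog move (or short word in the $L_i$) that strictly decreases $\Phi$. Iterating this prescription produces the required sequence $\gamma_n$, and convergence of $[\gamma_n\cdot x]$ to the singular class is equivalent to $\Phi(\gamma_n\cdot x)\to 0$.

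A natural candidate for $\Phi$ combines two ingredients. The translation length sum
\[ T(x) = \ell(s_{x_2}s_{x_1}) + \ell(s_{x_4}s_{x_3}) + \ell(s_{x_6}s_{x_5}) = 2\bigl(d(x_1,x_2)+d(x_3,x_4)+d(x_5,x_6)\bigr) \]
vanishes precisely on the representatives where consecutive pairs coincide (forcing the pinched structure), while a residual term such as $D(x)=d(x_1,x_3)+d(x_3,x_5)+d(x_5,x_1)$ measures how far such a pinched representative is from being singular. Setting $\Phi=T+D$ (or a suitable smoothing), one has $\Phi=0$ exactly on the singular orbit.

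The crux is to exhibit the move. Writing $A_j = s_{x_{2j}}s_{x_{2j-1}}$, the relation $A_3A_2A_1 = 1$ distinguishes the components of $X$: in the extremal components it forces a coherently oriented Fuchsian triple with disjoint axes (a rigid pair-of-pants picture), whereas in $X_0$ the axes are permitted to intersect or overlap. I would exploit this non-rigidity in a case-based geometric algorithm. When two axes overlap, a leapfrog transported across the overlap partially cancels a translation and strictly decreases $T$; when $T$ is already small compared to $D$, a different leapfrog (a conjugate of some $L_i$ designed as an analogue of a Dehn twist about the dual curve $a_j$) brings two of the odd-indexed points closer, decreasing $D$. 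The geometric algorithm is precisely the rule selecting which move to apply based on the mutual position of the three axes.

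The main obstacle, and where I expect most of the work to lie, is to ensure the decrease of $\Phi$ is uniform enough to drive $\Phi(\gamma_n\cdot x)$ to $0$ rather than to a strictly positive infimum. The strategy is to prove, for each $c>0$, a uniform bound $\varepsilon(c)>0$ on the decrease at configurations with $\Phi\geq c$, using compactness of the relevant moduli of axis configurations together with explicit hyperbolic trigonometry in each geometric case; then iteration of strict decreases bounded below forces $\Phi\to 0$. A secondary subtlety is that the pinched locus is closed and $\ModSo$-invariant, so a non-pinched orbit never enters it and must instead approach the singular point as a topological limit, in line with the statement and explaining the role of the non-pinched hypothesis.
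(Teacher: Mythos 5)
Your overall strategy --- a nonnegative size function on $X_0$ driven to zero by prescribed leapfrog moves --- is the same as the paper's, which works with $A(z)=a_{12}+a_{34}+a_{56}$ and $B(z)=a_{23}+a_{45}+a_{61}$ in place of your $T$ and $D$. But the step you yourself identify as the crux, the uniform lower bound $\varepsilon(c)>0$ on the decrease of $\Phi$ over $\{\Phi\ge c\}$, cannot hold. The pinched locus is closed, $\ModSo$-invariant, and meets $\{\Phi\ge c\}$ (a non-singular pinched representative has $T=0$ but $D>0$); on it the orbit closure need not contain the singular class at all (discrete pinched configurations have closed orbits), so no word in the $L_i$ decreases $\Phi$ there, and by continuity the achievable decrease at nearby non-pinched points tends to $0$. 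The paper's treatment of the degenerate case makes this concrete: when $x_5=x_6$ the other four points lie on a line, and the only way to contract them is Euclid's algorithm on the ratio $d(x_1,x_2)/d(x_1,x_4)$, which converges precisely because the non-pinched hypothesis forces this ratio to be irrational, at a rate governed by its diophantine type. So compactness of sublevel sets gives you nothing here; the paper instead proves a multiplicative contraction of $A$ in the triangle and parabolic cases and handles the near-pinched and aligned configurations by a separate, intrinsically non-uniform argument.

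A second gap is that your case analysis misses the configuration that absorbs most of the paper's effort: inside $X_0$ the three axes of $s_{x_2}s_{x_1}$, $s_{x_4}s_{x_3}$, $s_{x_6}s_{x_5}$ may be pairwise disjoint in $\overline{\HH}$, forming with their common perpendiculars a \emph{skew} right-angled hexagon (case (SKH) of Lemma~\ref{tricho}). There the basic moves $L_1,L_3,L_5$ only slide $x_i,x_{i+1}$ along their common axis and leave $T$ (hence $A$) unchanged; nothing ``cancels a translation''. The paper must use the composite half-twist $(L_1L_2L_1)^2$, which conjugates the first three points by $s_{x_3}s_{x_2}s_{x_1}$, together with a reflection-line computation showing that $\cosh(a_{34})$ drops by an explicit amount controlled by $\min(a_{12},a_{56})$, plus a further operation (Skh2) to make $B$ decrease. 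Without an analogue of these composite moves and estimates, your algorithm stalls on an open set of $X_0$.
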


\begin{theorem}\label{SexErgodique}
  The group $\ModSo$ acts on $X_0$ ergodically.
\end{theorem}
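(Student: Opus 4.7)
The strategy is to combine the geometric minimality of Theorem \ref{SexMinimal} with a local ergodicity statement at the singular configuration. Since $\ModSo$ preserves the natural symplectic volume on $X_0$, it suffices to show that any measurable $\ModSo$-invariant function $f\colon X_0\to\R$ is essentially constant. One checks first that the pinched configurations form a lower-dimensional subvariety of $X_0$, hence have zero measure; thus Theorem \ref{SexMinimal} applies $\mu$-almost everywhere and produces, for a.e.\ $x\in X_0$, a sequence $\gamma_n\in\ModSo$ with $\gamma_n\cdot x$ converging to the singular isometry class $x_\ast$.

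The second step uses Lusin's theorem to choose, for each $\epsilon>0$, a compact $K_\epsilon\subset X_0$ with $\mu(X_0\setminus K_\epsilon)<\epsilon$ on which $f$ is continuous. For $\mu$-a.e.\ $x$, the sequence $\gamma_n\cdot x$ lands in $K_\epsilon$ infinitely often while converging to $x_\ast$; since $f(\gamma_n\cdot x)=f(x)$, the value $f(x)$ is recovered as a limit of values of $f|_{K_\epsilon}$ evaluated at points approaching $x_\ast$ along a definite direction in the tangent cone. In this way $f$ has, for a.e.\ $x$, a well-defined essential boundary value $f_\ast(v)$ depending only on the asymptotic direction $v\in T_{x_\ast}X_0$ of the approach $\gamma_n\cdot x\to x_\ast$.

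The main task is then to show that $f_\ast(v)$ does not depend on $v$. For this I would perform a local analysis at $x_\ast$: identify a neighborhood of $x_\ast$ in $X_0$ with the quotient of an open subset of the symplectic tangent space $T_{x_\ast}X_0$ by the (finite or compact) stabilizer of $x_\ast$, and then exhibit elements of $\ModSo$ fixing $x_\ast$ whose linearizations generate a rich linear group on $T_{x_\ast}X_0$. Natural candidates are compositions of leapfrog moves that correspond to Dehn twists along the simple closed curves of $\Sigma$ that degenerate at the singular configuration. The hard part is to show that their derivatives at $x_\ast$ generate a subgroup of the linear symplectic group that acts ergodically on $T_{x_\ast}X_0$ (for instance, a Zariski-dense subgroup, to which one can apply classical results of Moore type, or at the very least a subgroup acting topologically transitively on the fibers of any invariant measurable map). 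This requires explicit computation of the linearizations of the leapfrog moves at a highly degenerate configuration and a density argument for the resulting representation; this is where all the technical weight of the proof will lie.

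Once this local ergodicity is established, $f_\ast$ is almost surely constant along the tangent directions, so $f$ is essentially constant on the measurable set of points whose orbit accumulates at $x_\ast$. By the first step this set is conull in $X_0$, and the ergodicity of the $\ModSo$-action follows.
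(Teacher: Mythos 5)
Your overall architecture --- use Theorem \ref{SexMinimal} to push almost every orbit to the singular class, then do a local analysis there --- does match the paper's strategy, but the local analysis you propose contains a genuine gap, and it sits exactly where you admit ``all the technical weight'' lies. Your bridge from the invariance of $f$ to a boundary function $f_\ast$ on the tangent cone is broken in several places. The points $\gamma_n\cdot x$ converge to the single point $x_\ast$, which carries no mass, so Lusin's theorem gives no reason for them to meet $K_\epsilon$ infinitely often; the sequence need not have an asymptotic direction; and even granting both, invariance of $f_\ast$ under the \emph{linearized} action does not follow from invariance of $f$ under the actual nonlinear action without a renormalization argument you have not supplied. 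Moreover a neighbourhood of $x_\ast$ is not ``an open subset of the symplectic tangent space modulo a compact stabilizer'': it is a cone over a quadric (the locus $q=0$ for a Hermitian form of signature $(2,2)$, modulo $S^1$), and the paper points out the further subtlety that $\ModSo=B_6(S^2)=\mcg(\Sigma)/[\varphi]$ does not even act on the Zariski tangent space at $[\rho_0]$, the hyperelliptic involution acting there by $-1$.

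What the paper actually does near the singular class is different in kind. It uses the Goldman twist flows $\Phi_\gamma^t$ attached to partition curves $\gamma$: these are $2\pi$-periodic, their time-$\Theta_\gamma$ map is the half-twist, an element of $\ModSo$, and since $\Theta_\gamma$ is almost everywhere irrational any invariant function is almost everywhere invariant under the full flows. This is the step that converts invariance under a discrete group into invariance under continuous flows, and your proposal has no substitute for it. The technical heart is then Proposition \ref{prop_nonintegrable}: near $[\rho_0]$ every $\rho(\gamma)$ is elliptic, the trace functions have explicit quadratic Taylor expansions $\pm 8q_\gamma$ (Proposition \ref{prop_taylor}), and in the model $T^*_+\mathcal{L}$ of the Lagrangian Grassmannian the resulting Hamiltonian directions generate a completely non-integrable distribution; the orbit theorem gives local transitivity of the flows, hence local constancy of $f$, which propagates over the connected full-measure set $U$ by Theorem \ref{SexMinimal} and Observation \ref{UConnexe}. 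To salvage your route you would need an actual equidistribution or renormalization statement for orbits approaching $x_\ast$, plus a proof that the linearized $\Sp(4,\Z)$-type action on the quadric cone is ergodic; as written, both the construction of $f_\ast$ and its invariance are unproved, and the former is the essential missing idea.
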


Along the way, we will show that $X_0$ is homeomorphic to a conical
neighbourhood of its singularity and derive its homeomorphic type.

The space $X_0$, being a real algebraic variety, has a natural (Lebesgue)
class of measures for which it makes sense, as in
Theorem \ref{SexErgodique}, to say that the
action of $\ModSo$ is
ergodic. Moreover, $X_0^*$ also has a natural symplectic structure,
related to its interpretation as a character variety.

\subsection{Sextuples and representation spaces}

Let $\Sigma$ be a genus two surface,
let $\Gamma$ denote the fundamental group of $\Sigma$,
and let $X(\Gamma)=\Hom(\Gamma,\PSL)/\PSL$ be the space
of morphisms of $\Gamma$ in $\PSL$ up to conjugacy.
A representation $\rho\colon\Gamma\rightarrow\PSL$ is called
{\em elementary} if it has a finite orbit in $\overline{\HH}$.
Equivalently, $\rho$ is non-elementary if its image
is Zariski-dense in $\PSL$.
We denote by $\XX(\Gamma)$ the space of conjugacy
classes of non-elementary representations. By work of W.~Goldman
\cite{Goldman84} this is a smooth $6$-dimensional symplectic manifold.

Let $\mcg(\Sigma)$ be the mapping class group of $\Sigma$.
By the Dehn-Nielsen-Baer theorem, this group may be viewed as the
quotient $\Out^+(\Gamma)=\Aut^+(\Gamma)/\Inn(\Gamma)$ of
orientation-preserving
automorphisms of $\Gamma$ up to inner automorphisms.
A class $[\varphi]\in\Out^+(\Gamma)$ acts on a conjugacy class
$[\rho]\in X(\Gamma)$ by the formula
$[\varphi]\cdot[\rho]=[\rho\circ\varphi^{-1}]$.
In genus two, $\mcg(\Sigma)$ has a special element, the hyperelliptic
involution, which generates its center.

The Euler class
$\eu\colon X(\Gamma)\rightarrow\lbrace -2,-1,0,1,2\rbrace$
measures the obstruction of lifting the representations
$\Gamma\rightarrow\PSL$ to the
universal cover $\PSLtild$. By work of W.~Goldman \cite{Goldman88},
for each $k\in\lbrace -2,-1,1,2\rbrace$, the set $\XX_k(\Gamma)$
of classes of representations of Euler class~$k$ is connected,
and we proved in \cite{Mod2} that the set $\XX_0(\Gamma)$ splits into two
disjoint open sets $X_0^+(\Gamma)$ and $X_0^-(\Gamma)$,
that we denoted by $\mathcal{M}_+$ and $\mathcal{M}_-$.
The hyperelliptic involution fixes $X_0^+(\Gamma)$ pointwise, whereas it
acts on $X_0^-(\Gamma)$ as the conjugation by orientation-reversing
isometries of the plane.
A consequence of \cite{Mod2}, Proposition~1.2, is that both
$X_0^+(\Gamma)$ and $X_0^-(\Gamma)$ are connected.
We will discuss briefly this connectedness in Section~3.3.

If $\varphi\in\Diff_+(\Sigma)$ represents the hyperelliptic involution,
the quotient $S_o$ of $\Sigma$ by the action of $\varphi$ has
the structure of a spherical orbifold with six points of order~$2$.
Let $\Gamma_o$ denote its orbifold fundamental group.
As $\Gamma_o$ has the natural following presentation
\[\Gamma_o=\langle c_1,c_2,c_3,c_4,c_5,c_6 \, | \, c_i^2=1, c_1\cdots c_6=1
\rangle,\]
there is an obvious identification between $\Sex$ and
the space $\Hom'(\Gamma_o,\PSL)$ of morphisms which do not kill
any of the $c_i$'s, hence the space $X$ 
is in bijection with the character variety $X(\Gamma_o)=\Hom'(\Gamma_o,\PSL)/\PSL$.

Now if $\pi\colon\Sigma\rightarrow S_o$ is the quotient by the
action of $\varphi$, the natural map
$\pi_*\colon\Gamma\rightarrow\Gamma_o$
induces a map $\pi^*\colon X(\Gamma_o)\rightarrow X(\Gamma)$,
which restricts to a canonical identification between
$\XX_0$ and $X_0^+(\Gamma)$.
Furthermore, this identification is equivariant for the action of
the group $\mcg(\Gamma_o)$ of leapfrog moves on $\XX_0$, and the
action of $\mcg(\Gamma)$ on~$X_0^+(\Gamma)$.

\subsection{Dynamics of the mapping class group on $\PSL$-characters in genus two}

In \cite{Mod2} we studied the dynamics of $\mcg(\Sigma)$ on $\XX(\Gamma)$,
leaving behind the component $X_0^+(\Gamma)$. Namely, we proved that
$\mcg(\Sigma)$ acts ergodically on each of the components
$\XX_{-1}(\Gamma)$, $\XX_1(\Gamma)$ and $X_0^-(\Gamma)$,
and proved the related result that
every representation in these connected components sends some simple
closed curve to a non-hyperbolic element of $\PSL$.
The proof of the ergodicity in \cite{Mod2} is strongly related to the
existence of {\em non-separating} simple closed curves mapped to
non-hyperbolic elements, which we proved for almost every representation
in these components. By Proposition~1.2 of \cite{Mod2}, the same
technique cannot be applied to representations in $X_0^+(\Gamma)$.

An easy consequence of Theorem~\ref{SexMinimal} is that
every representation in $X_0^+(\Gamma)$ sends some
separating simple closed curve either to the identity or to an elliptic
element of $\PSL$. Then the techniques for proving the ergodicity
of $\mcg(\Sigma)$ are more involved than in \cite{Mod2}.
Together with the results of \cite{Mod2}, Theorems~\ref{SexMinimal}
and \ref{SexErgodique} yield the following statements:

\begin{theorem}\label{BowditchGenre2}
  Let $\rho\colon\Gamma\rightarrow\PSL$ be a representation mapping
  every {\em simple} closed curve to a hyperbolic element. Then
  $\rho$ is faithful and discrete.
\end{theorem}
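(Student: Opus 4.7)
The plan is to argue the contrapositive, combining the decomposition of $\XX(\Gamma)$ into connected components with the known ``non-hyperbolic simple closed curve'' statements on each non-Teichm\"uller component, together with a direct analysis of elementary representations. Assume $\rho$ is not both faithful and discrete; we must produce a simple closed curve on $\Sigma$ whose $\rho$-image is non-hyperbolic.

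First I would dispose of elementary representations. If the image preserves a point of $\HH$ it lies in a compact torus and every non-trivial element is elliptic, so any essential simple closed curve works. If it fixes a point of $\partial\HH$, the image lies in an affine subgroup, whose commutator subgroup consists of translations (parabolics); since any separating simple closed curve lies in $[\Gamma,\Gamma]$, it has non-hyperbolic image. In the remaining elementary case the image stabilises a geodesic without fixing its endpoints, defining a non-trivial character $\psi\colon\Gamma\to\Z/2$ recording whether an element swaps the two boundary points. If $\psi$ is trivial the image is abelian and any separating simple closed curve maps to the identity; otherwise I would pick a simple closed curve representing a primitive class in $H_1(\Sigma;\Z)$ whose mod-$2$ reduction is dual to $\psi$, so that its image is a half-turn rotation, hence elliptic.

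For non-elementary $\rho$, $[\rho]\in\XX(\Gamma)$ lies in one of Goldman's five components. The extremal components $\XX_{\pm 2}(\Gamma)$ consist entirely of Fuchsian uniformisations of the closed surface $\Sigma$, which are faithful and discrete and send every essential simple closed curve to a hyperbolic isometry; our hypothesis rules these out. The results of \cite{Mod2} handle $\XX_{\pm 1}(\Gamma)$ and $X_0^-(\Gamma)$: in each case every representation admits a non-separating simple closed curve with non-hyperbolic image. This leaves only $X_0^+(\Gamma)$.

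The main obstacle, and the reason the present paper is needed, is this last component. Via the equivariant identification $X_0\cong X_0^+(\Gamma)$, let $x\in X_0$ represent $[\rho]$. If $x$ is pinched, some pair $x_i=x_{i+1}$ yields a relation $s_{x_{i+1}}s_{x_i}=1$, which via $\pi_*\colon\Gamma\to\Gamma_o$ corresponds to a separating simple closed curve in $\Sigma$ mapped to the identity. If $x$ is not pinched, Theorem~\ref{SexMinimal} produces $\gamma_n\in\ModSo$ with $\gamma_n\cdot x$ converging to the singular class, at which every separating simple closed curve $\delta\subset\Sigma$ with $\pi_*\delta=c_{i_1}c_{i_2}c_{i_3}$ is sent to a threefold product of a single order-two rotation, hence to an elliptic element. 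Since the elliptic locus in $\PSL$ is open (the condition $|\tr|<2$), for $n$ large the simple closed curve $\gamma_n^{-1}\cdot\delta$ is mapped by $\rho$ to an elliptic element, completing the contrapositive.
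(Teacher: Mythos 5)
Your overall route is exactly the paper's: dispose of elementary representations by hand, invoke Goldman's characterisation of the extremal components, quote \cite{Mod2} for $\XX_{\pm1}(\Gamma)$ and $X_0^-(\Gamma)$, and use Theorem~\ref{SexMinimal} to handle $X_0^+(\Gamma)$. The elementary analysis and the reduction to $X_0^+(\Gamma)$ are fine (one harmless slip: in the pinched case the curve $a_1$ with $\pi_*(a_1)=c_1c_2$ that gets killed is non-separating, not separating — but a non-hyperbolic image is all you need).

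The last step, however, contains a genuine error. A separating simple closed curve $\delta\subset\Sigma$ lying over a partition curve satisfies $\pi_*\delta=(c_{i_1}c_{i_2}c_{i_3})^2$, not $c_{i_1}c_{i_2}c_{i_3}$ (the latter is not even in $\ker(\Gamma_o\to\Z/2)=\Gamma$). At the singular configuration $\delta$ is therefore sent to $s_0^2=1$, the identity --- which is \emph{not} elliptic and does not lie in the open set $\lbrace|\tr|<2\rbrace$. So openness of the elliptic locus does not close the proof: continuity only gives $\tr\rho(\gamma_n^{-1}\cdot\delta)\to 2$, and a priori the trace could approach $2$ from above, i.e., through hyperbolic elements. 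The gap is filled by Proposition~\ref{Prop12Mod2} (Proposition~1.2 of \cite{Mod2}), which the paper quotes for exactly this purpose: on $X_0^+(\Gamma)$ the trace of $[\rho(a),\rho(b)]$ with $i(a,b)=1$ always lies in $(-\infty,2]$ and equals $2$ only when the commutator is trivial; combined with the convergence of the trace to $2$ this places it in $(-2,2]$ for large $n$, so the image of $\gamma_n^{-1}\cdot\delta$ is elliptic or the identity, in either case non-hyperbolic. (Equivalently, argue directly: once all six points lie within $\varepsilon$ of one another, $\frac12|\tr(s_{x_3}s_{x_2}s_{x_1})|=\sinh(h)\sinh(d(x_1,x_2))<1$, so $s_{x_3}s_{x_2}s_{x_1}$ is elliptic or a half-turn and its square is elliptic or trivial.) This is precisely why the paper states the consequence of Theorem~\ref{SexMinimal} as ``identity or elliptic'' rather than ``elliptic''.
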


\begin{theorem}\label{GoldmanGenre2}
  The mapping class group $\mcg(\Sigma)$ acts ergodically on each
  connected component of non-extremal Euler class of $\XX(\Gamma)$.
\end{theorem}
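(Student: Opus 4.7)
The plan is simply to assemble Theorem~\ref{SexErgodique} with the three cases already treated in \cite{Mod2}. By \cite{Mod2}, $\mcg(\Sigma)$ acts ergodically on each of $\XX_{-1}(\Gamma)$, $\XX_1(\Gamma)$ and $X_0^-(\Gamma)$, so only the component $X_0^+(\Gamma)$ remains to be addressed.

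For this last component, I would transport Theorem~\ref{SexErgodique} along the canonical identification $\pi^*\colon \XX_0 \to X_0^+(\Gamma)$ recalled in Section~1.2, where $\XX_0 = \XX \cap X_0$ is the open subset of $X_0$ consisting of configurations of six points not lying on a common geodesic. By construction $\pi^*$ is a diffeomorphism of smooth real-algebraic varieties intertwining the $\ModSo$-action with the $\mcg(\Sigma)$-action, the latter factoring through the quotient by the hyperelliptic involution (which by Section~1.2 acts trivially on $X_0^+(\Gamma)$). Consequently $\pi^*$ preserves the Lebesgue measure class.

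Theorem~\ref{SexErgodique} provides ergodicity of $\ModSo$ on $X_0$; the complement $X_0\setminus \XX_0$ is a proper real-algebraic $\ModSo$-invariant subset and therefore of measure zero. Ergodicity on $X_0$ thus restricts to $\XX_0$ and transports through $\pi^*$ to $X_0^+(\Gamma)$, completing the proof. The only substantive step beyond invoking the previous results is to verify the measure-theoretic and equivariance compatibility of $\pi^*$, which is routine from its construction as the pullback induced by $\pi_*\colon\Gamma\to\Gamma_o$; the genuinely difficult content has been absorbed into Theorem~\ref{SexErgodique} itself.
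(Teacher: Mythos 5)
Your proposal is correct and follows exactly the route the paper takes: the paper derives Theorem~\ref{GoldmanGenre2} by combining the ergodicity on $\XX_{-1}(\Gamma)$, $\XX_1(\Gamma)$ and $X_0^-(\Gamma)$ from \cite{Mod2} with Theorem~\ref{SexErgodique}, transported to $X_0^+(\Gamma)$ via the equivariant identification $\pi^*$ of Section~1.2 (discarding the measure-zero set of aligned configurations). Nothing is missing.
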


Theorem~\ref{BowditchGenre2} gives an affirmative answer to a question
of B.~Bowditch (see \cite{Bowditch}, question~C) in the genus two case, while
Theorem~\ref{GoldmanGenre2} proves a conjecture of W.~Goldman in the
genus two case.

If a representation $\rho\colon\Gamma\rightarrow\PSL$ sends a separating simple
curve to an elliptic element, we may think of the restriction of
$\rho$ on the fundamental group of each of the two one-holed tori
as the holonomy of a conic hyperbolic structure on a torus with one cone
point.
Thus we may think geometrically of
a generic representation in $X_0^+(\Gamma)$ as two such tori glued along their cone
points. For this reason, we like to call {\em hourglass} these representations.

\subsection{Brief outline of the proofs}

The dynamical system of sextuples of points in $\HH$ acted on by leapfrog
moves is simple enough to find, for every possible non-pinched configuration,
an explicit sequence of leapfrog moves
which decreases the sum $\sum_{i=1}^6d(x_i,x_{i+1})$. This is done
case by case, and leads to the proof of Theorem \ref{SexMinimal}.
This also yields a geometric algorithm which, given any non-elementary
representation of $\Gamma_o$ in $\PSL$, decides whether it is
discrete; thus extending the results of \cite{Gilman} and \cite{GilmanMaskit}
to the group $\Gamma_o$.

Theorem \ref{SexMinimal} enables to reduce the proof of Theorem \ref{SexErgodique} to a
neighbourhood of the singular representation as in \cite{FunarMarche}. This neighbourhood has several
natural, simple and useful interpretations. First, as a set of limits of
sextuple configurations in $\HH$, it may be thought of as a set of
configurations of six points in the Euclidean plane, satisfying extra conditions.
Second, following \cite{Weil64} or \cite{Goldman84}, it can be interpreted in terms of the first
cohomology group of $\Gamma_o$ in $\sldeux$ with coefficients twisted
by the adjoint action of the singular representation.
This leads to a third interpretation as an open set in
the cotangent bundle of the Grassmannian of Lagrangians in the 
symplectic vector space $H_1(\Sigma,\R)$. Each of these
three models bears a natural symplectic structure, and we prove that
the natural symplectic structure on $X_0^+(\Gamma)$ converges to the
relevant natural symplectic form on each model, at the singular
representation.

The idea is then to use the Dehn twists along the separating curves
which are mapped to elliptic elements.
The strategy of the proof, as in \cite{GoldmanXia11}, is to prove
that if $[\rho]$ is sufficiently
close to the singular class of representations, the corresponding
twist flows are transitive on a neighbourhood of $[\rho]$.
In the situation at hand, we do not show whether these twist flows
generate the space of all directions around our representations
(contrarily to \cite{GoldmanXia11} or \cite{Mod2}),
but by using the third model we prove that their directions
generate a completely non-integrable distribution of directions,
hence these flows are indeed transitive; this leads to the proof of
Theorem \ref{SexErgodique}.

\subsection{Organisation of the article}
We introduce some notation in Section \ref{SectionSex} and relate our simple dynamical system
to the dynamics of the mapping class group in genus 2. In Section \ref{SectionAlgo}, we prove
Theorem \ref{BowditchGenre2}. Section \ref{neighbourhood} is devoted to the neighborhood of
the singular configuration whereas Section \ref{SectionDyn} contains the proof of Theorem \ref{GoldmanGenre2}.

\subsection{Acknowledgements}
This work was partially supported by the french ANR ModGroup ANR- 11-BS01-0020 and SGT ANR-11-BS01-0018.
The second author acknowledges support from U.S. National Science Foundation grants DMS 1107452, 1107263, 1107367 ``RNMS: Geometric Structures and Representation Varieties'' (the GEAR Network). We would like to thank Tian Yang and Dick Canary for their kind interest.

\section{Configurations of sextuples}\label{SectionSex}

The aim of this section is to expand on the relation, mentionned
in the introduction, between the space of sextuple configurations
and the $\PSL$-character variety of the surface of genus two.
We will first elaborate on the presentation of the
marked groups $\Gamma$ and $\Gamma_o$, in order to see the group
of leapfrog moves as a mapping class group. It is actually isomorphic
to the 6-strands braid group of the sphere.
We will then recall some elementary drawings relating products of
half-turns; these reminders will be useful later on.
We will then expand on the natural map between $X(\Gamma_o)$ and
$X(\Gamma)$,
and finally exhibit a complete list of types of sextuple configurations,
which will be used in the following section.

\subsection{Markings of the groups $\Gamma$ and $\Gamma_o$}\label{SubsecMarkings}

With suitable markings,
the groups $\Gamma$ and $\Gamma_o$ admit the following presentations,
\[\Gamma_o=\langle c_1,c_2,c_3,c_4,c_5,c_6 \, | \, c_i^2=1, c_1\cdots c_6=1
\rangle,\]
\[ \Gamma=\langle a_1,b_1,a_2,b_2\,|\,[a_1,b_1][a_2,b_2]=1\rangle, \]
and the morphism $\pi_*$ associated to the quotient
by the hyperelliptic involution is defined as follows:
\begin{equation}\label{PiStar}
\pi_*(a_1)=c_1 c_2, \quad\pi_*(b_1)=c_3 c_2,\quad
\pi_*(a_2)=c_4 c_5, \quad \pi_*(b_2)=c_6 c_5.
\end{equation}
Figure \ref{FigureMarquages} is meant to help the reader with the above
conventions for presenting the groups $\Gamma$ and $\Gamma_o$.
It should be noted here that, since the product in a fundamental group
uses concatenation of paths, words in these groups are to be
read from left to right, and our convention for the
commutator here is:
$[a,b]=aba^{-1}b^{-1}$.
\begin{figure}[hbt]
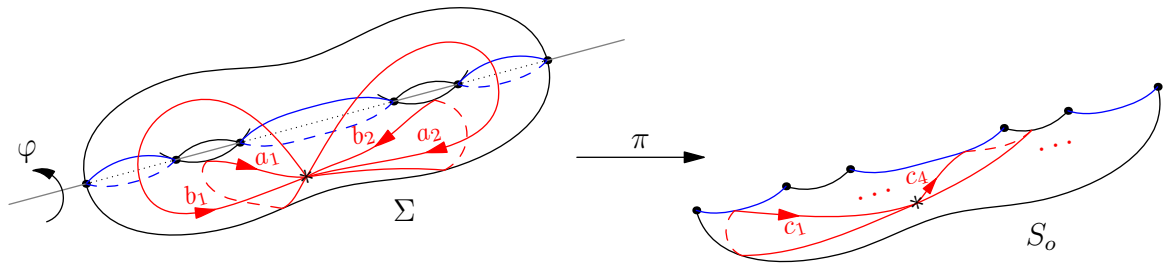

\begin{center}
\begin{asy}
  import geometry;

  picture Sigma;
  //
  //
  point p1 = (0,0), p2 = (35,0), p3 = (60, 0);
  point p4 = (120, 0), p5 = (145,0), p6 = (180,0);

  path contourbas = p1{down}..(50,-30){right}..(90,-26){right}..(130,-30){right}..(180,0){up};
  path contourhaut= p1{up}..(45,30){right}..(90,24){right}..(135,30){right}..(180,0){down};

  dot (Sigma,p1); dot(Sigma,p2); dot(Sigma,p3);
  dot(Sigma,p4); dot(Sigma,p5); dot(Sigma,p6);
  draw(Sigma,contourbas);
  draw(Sigma,contourhaut);

  //
  path hautroug = p2{dir(33)}..p3{dir(-33)};
  path batroug = (p2+(-5,4))..p2..p3..(p3+(5,4));
  draw(Sigma,hautroug); draw(Sigma,batroug);
  path hautroud = p4{dir(33)}..p5{dir(-33)};
  path batroud = (p4+(-5,4))..p4..p5..(p5+(5,4));
  draw(Sigma,hautroud); draw(Sigma,batroud);

  //
  path hautanseg = p1{dir(33)}..p2{dir(-33)};
  path basanseg = p1{dir(-33)}..(17,-4)..p2{dir(33)};
  draw(Sigma,hautanseg,blue); draw(Sigma,basanseg,dashed+blue);
  path hautansed = p5{dir(33)}..p6{dir(-33)};
  path basansed = p5{dir(-33)}..(163,-4)..p6{dir(33)};
  draw(Sigma,hautansed,blue); draw(Sigma,basansed,dashed+blue);
  path hautansem = p3{dir(33)}..(90,6)..p4{dir(-33)};
  path basansem = p3{dir(-33)}..(90,-5)..p4{dir(33)};
  draw(Sigma,hautansem,blue); draw(Sigma,basansem,dashed+blue);

  //
  draw(Sigma,(-30,0)--p1,grey); draw(Sigma,p1--p2,dotted);
  draw(Sigma,p2--p3,grey); draw(Sigma,p3--p4,dotted);
  draw(Sigma,p4--p5,grey); draw(Sigma,p5--p6,dotted);
  draw(Sigma,p6--(210,0),grey);
  //
  draw(Sigma,(-18,-10){dir(5)}..(-18,10){dir(170)},Arrow);
  label(Sigma,"$\varphi$",(-18,10),N);

  //
  point base = (81,-19);
  label(Sigma,"$*$",base);

  //
  //
  picture So;
  draw(So, contourbas);
  dot (So,p1); dot(So,p2); dot(So,p3); dot(So,p4); dot(So,p5); dot(So,p6);
  path SoTroug=p2{dir(-33)}..p3{dir(33)};
  path SoTroud=p4{dir(-33)}..p5{dir(33)};
  draw(So,SoTroug); draw(So,SoTroud);
  draw(So,basanseg,blue); draw(So,basansed,blue); draw(So,basansem,blue);
  label(So,"$*$",base);

  //
  //
  path b1=base{dir(110)}..(45,20){left}..(20,-4){down}..base{dir(6)};
  draw(Sigma,b1,red,Arrow(Relative(0.78)));
  label(Sigma,"\footnotesize $b_1$",relpoint(b1,0.78),dir(128),red);
  path a2=base{dir(45)}..(140,17){right}..(160,0){down}..base{left};
  draw(Sigma,a2,red,Arrow(Relative(0.76)));
  label(Sigma,"\footnotesize $a_2$",relpoint(a2,0.76),dir(70),red);

  point P1=relpoint(contourbas,0.4);
  point P2=relpoint(SoTroug,0.45);
  path a11=base{dir(180+40)}..P1{dir(180+10)};
  path a12=P1{dir(180-25)}..(0.5*(P1+P2)+(-6,-4))..P2{dir(30)};
  path a13=P2{dir(-10)}..base{dir(-10)};
  draw(Sigma,a11,red);
  draw(Sigma,a12,red+dashed);
  draw(Sigma,a13,red,Arrow(Relative(0.5)));
  label(Sigma,"\footnotesize $a_1$",relpoint(a13,0.5),0.05*dir(43),red);

  point P3=relpoint(contourbas,0.7);
  point P4=relpoint(SoTroud,0.6);
  path b21=base{dir(-7)}..P3{dir(-20)};
  path b22=P3{dir(12)}..(P4+(7,-6))..P4{dir(180-30)};
  path b23=P4{dir(180+14)}..(0.5*(P4+base)+(0,-3.5))..base{dir(180+15)};
  draw(Sigma,b21,red);
  draw(Sigma,b22,red+dashed);
  draw(Sigma,b23,red,Arrow(Relative(0.5)));
  label(Sigma,"\footnotesize $b_2$",relpoint(b23,0.5),0.4*dir(120),red);

  //
  //
  point P5=relpoint(contourbas,0.12);
  point P6=relpoint(basanseg,0.4);

  path q11=base{dir(188)}..P5{dir(165)};
  path q12=P5{dir(140)}..P6{dir(30)};
  path q13=P6{dir(-20)}..base{dir(4)};

  draw(So,q11,red);
  draw(So,q12,red+dashed);
  draw(So,q13,red,Arrow(Relative(0.35)));
  label(So,"\footnotesize $c_1$",relpoint(q13,0.35),0.4*SSW,red);
  label(So,"$\cdots$",base,2.6*dir(145),red);

  point P7=relpoint(basansem,0.72);
  point P8=relpoint(SoTroud,0.4);
  path q41=base{dir(35)}..P7{dir(15)};
  path q42=P7{dir(-10)}..P8{dir(30)};
  path q43=P8{dir(210)}..base{dir(190)};
  draw(So,q41,red,Arrow(Relative(0.44)));
  draw(So,q42,red+dashed);
  draw(So,q43,red);
  label(So,"\footnotesize $c_4$",relpoint(q41,0.45),0.8*dir(160),red);
  label(So,"$\cdots$",P8,1.8*dir(-60),red);

  add(rotate(15)*Sigma);
  add(rotate(15)*So,(230,-10));
  draw((185,10)--(233,10),Arrow);
  label("$\pi$",((185+233)/2,10),N);

  label("$\Sigma$",(120,-10));
  label("$S_o$",(360,-20));
\end{asy}
\end{center}
\caption{Markings of the groups $\Gamma$ and $\Gamma_o$}
\label{FigureMarquages}
\end{figure}
On the other hand, we prefer to think of $\PSL$ as acting on $\HH$
on the left, hence we prefer to read words in $\PSL$ from right to left.
For this reason, we will take the convention that morphisms
$\rho\colon\Gamma\rightarrow\PSL$ should be defined as satisfying
the relation $\rho(\alpha\beta)=\rho(\beta)\rho(\alpha)$ for all
$\alpha,\beta\in\Gamma$.
We will also denote, for $A,B\in\PSL$, $[A,B]=B^{-1}A^{-1}BA$.
This convention is reminiscent of
\cite{Mod2} or~\cite{GalloKapovichMarden}.

Every positive self-diffeomorphism $\psi$ of $\Sigma$
commutes, up to isotopy, with the hyperelliptic involution $\varphi$,
hence descends to a diffeomorphism of the sphere with six marked points.
This defines an isomorphism between the quotient
$\mcg(\Sigma)/[\varphi]$ and the group $B_6(S^2)$, the
6-strands braid group of the sphere. This group
is generated by the ``standard'' generators, often denoted by $\sigma_i$,
as schematised in Figure \ref{FigureLeapfrog}.
\begin{figure}[hbt]
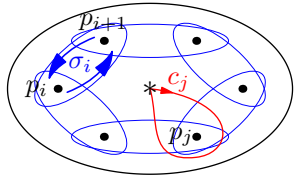

\begin{center}
\begin{asy}
  import geometry;

  unitsize(1pt,0.6pt);

  point base = (0,0);
  real r=35, R=54;
  point P1=r*dir(0), P2=r*dir(60), P3=r*dir(120);
  point P4=r*dir(180), P5=r*dir(240), P6=r*dir(300);
  path cercle=circle(base,R);
  path Cerc1=shift(0.5*(P2+P3))*yscale(0.6)*xscale(1.7)*circle((0,0),r/2);
  path Cerc2=rotate(60,(0,0))*Cerc1, Cerc3=rotate(120,(0,0))*Cerc1;
  path Cerc4=rotate(180,(0,0))*Cerc1, Cerc5=rotate(240,(0,0))*Cerc1;
  path Cerc6=rotate(300,(0,0))*Cerc1;

  path Fleche1=yscale(1.55)*xscale(1.55)*arc((0,0),r/1.4,123,170);
  path Fleche2=rotate(180,(0.5*(P3+P4)))*Fleche1;

  path qjrev=base{dir(-80)}..(P6+(0,-13))..(P6+(10,0))..base{dir(170)};
  path qj=reverse(qjrev);

  draw(cercle);
  draw(Cerc1,0.2pt+blue); draw(Cerc2,0.2pt+blue); draw(Cerc3,0.2pt+blue);
  draw(Cerc4,0.2pt+blue); draw(Cerc5,0.2pt+blue); draw(Cerc6,0.2pt+blue);
  dot(P1); dot(P2); dot(P3); dot(P4); dot(P5); dot(P6);
  draw(Fleche1,0.6pt+blue,Arrow);
  draw(Fleche2,0.6pt+blue,Arrow);
  label("$*$",base);
  label("\footnotesize $p_j$",P6,0.3*W);
  draw(qj,red,Arrow(Relative(0.1)));
  label("\footnotesize $c_j$",relpoint(qj,0.1),0.3*N,red);
  label("\footnotesize $p_i$",P4,W);
  label("\footnotesize $p_{i+1}$",P3,N);
  label("\footnotesize $\sigma_i$",(0.5*(P3+P4)),blue);
\end{asy}
\end{center}
\caption{Standard generators of $B_6(S^2)$}
\label{FigureLeapfrog}
\end{figure}
As they are depicted in Figure \ref{FigureLeapfrog} the diffeomorphisms
$\sigma_i$ fix the base point of $S_o$ hence act as automorphisms of
$\Gamma_o$; we can read:
${\sigma_i}_*(c_i)=c_{i+1}$ and
${\sigma_i}_*(c_{i+1})=c_{i+1}^{-1}c_ic_{i+1}=c_{i+1}c_ic_{i+1}$.
Hence, the action of $\sigma_i$ on representations
$\Gamma_o\rightarrow\PSL$ coincides with the action of the
leapfrog move $L_i$.

In addition to the leapfrog moves corresponding to the $\sigma_i$,
we will often use the cyclic permutation
$\sigma_5\sigma_4\cdots\sigma_1$, which acts on sextuples by
permutation, $(x_1,\ldots,x_6)\mapsto(x_2,\ldots,x_1)$,
as well as the ``half-twist'' $(\sigma_1\sigma_2\sigma_1)^2$,
which replaces $c_1$, $c_2$ and $c_3$ by their conjugates by
$c_1c_2c_3$, thus which acts on sextuples by the formula
$(x_1,\ldots,x_6)\mapsto(f(x_1),f(x_2),f(x_3),x_4,x_5,x_6)$
where $f=s_{x_3}s_{x_2}s_{x_1}$. We call it this way because it is
the image, in $\mcg(S_o)$, of the half-Dehn twist along the
separating curve $[a_1,b_1]$.

Let us insist, finally, that the map
$\mcg(\Sigma)/[\varphi]\rightarrow B_6(S^2)=\mcg(S_o)$ is explicit,
and it is easy to translate an explicit sequence of leapfrog moves into
an explicit sequence of Dehn twists on $\Sigma$. Namely, in
the left part of Figure \ref{FigureMarquages}, consider the three
blue closed curves, and the two black curves making the two handles
of $\Sigma$. It is well-known that the five corresponding Dehn twists
generate $\mcg(\Sigma)$.
The five Dehn twists along these five curves, ordered
from left to right, descend respectively to
$\sigma_1$, \ldots, $\sigma_5$ in $\mcg(S_o)$.

\subsection{Products of three half-turns and commutators}\label{SubsecTroiSym}

Let $x_1,x_2,x_3\in\HH$ and let $s_i$, $i=1,2,3$, be the
half-turn around $x_i$.
If $x_1,x_2,x_3$ are on a same line, then $s_1,s_2,s_3$ can be thought
of as isometries of the real line, and the product $s_3s_2s_1$ is a
half-turn around a point easy to spot on this line (see Figure
\ref{FigureProd3Comm}, left).
If $x_1,x_2,x_3$ are not on a same line, then $x_3$ is at some distance
$h$ from the line $(x_1,x_2)$.
Then $s_3s_2s_1=s_3s_2's_1'$, where $s_2'$ is the half-turn around the
closest point, $x_2'$, to $x_3$ on the line $(x_1,x_2)$, and $s_1'$
is a half-turn around the point $x_1'\in(x_1,x_2)$ chosen so that
$s_2s_1=s_2's_1'$.
Now $s_3s_2s_1$ is the composition of two explicit reflections,
$r_2r_1$ (see Figure \ref{FigureProd3Comm}, right).
\begin{figure}[htb]
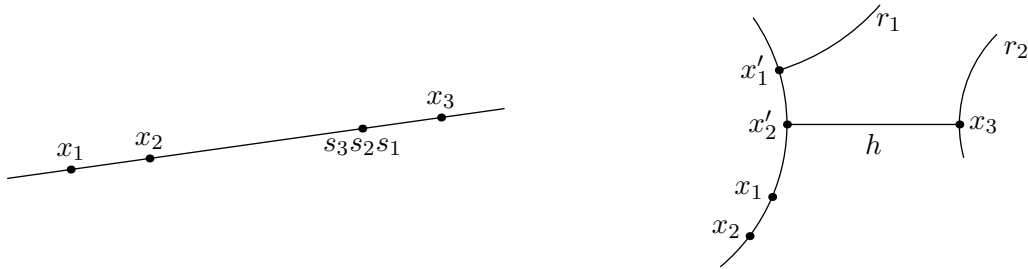

\begin{center}
\begin{asy}
  import geometry;

  point origine1=(0,-17), origine2=(270,0); picture pic, pic2;

  real ecart12=10, ecartmilieu=27, depass=8; pair vecteur = 3*dir(8);
  point p0 = -depass*vecteur, p1 = (0,0), p2 = (ecart12)*vecteur;
  point p3 = (2*ecart12+ecartmilieu)*vecteur;
  point C = (ecart12+ecartmilieu)*vecteur;
  point p5 = (depass+2*ecart12+ecartmilieu)*vecteur;

  draw (pic, p0--p5);
  dot(pic, p1); label(pic, "\small $x_1$", p1, N);
  dot(pic, p2); label(pic, "\small $x_2$", p2, N);
  dot(pic, p3); label(pic, "\small $x_3$", p3, N);
  dot(pic, C); label(pic, "\small $s_3s_2s_1$", C, S);
  add(pic, origine1);

  real hh = 65; point x2p = (0,0), x3 = (hh,0);
  real RayonGauch = 70, RayonDroit = 48, Angl1p = 17;
  point x1p = (-RayonGauch,0)+RayonGauch*dir(Angl1p);
  point x1 = (-RayonGauch,0)+RayonGauch*dir(-23);
  point x2 = (-RayonGauch,0)+RayonGauch*dir(-37);
  point hautgauch = (-RayonGauch,0)+RayonGauch*dir(35);
  point basgauch = (-RayonGauch,0)+RayonGauch*dir(-50);

  point finlignr1 = (35,45); path lignr1 = (x1p{dir(Angl1p)}..finlignr1);

  point hautdroit = (hh+RayonDroit,0)+RayonDroit*dir(135);
  point basdroit = (hh+RayonDroit,0)+RayonDroit*dir(195);

  path ligngauch = (basgauch..x2..x1..x2p..x1p..hautgauch);
  path ligndroit = (hautdroit..x3..basdroit);

  draw (pic2,ligngauch); draw (pic2,ligndroit);
  draw (pic2,lignr1); draw (pic2,x2p--x3);
  dot(pic2,x2p); label(pic2,"\small $x_2'$", x2p, W);
  dot(pic2,x1p); label(pic2,"\small $x_1'$", x1p, W);
  dot(pic2,x1); label(pic2,"\small $x_1$", x1, WNW);
  dot(pic2,x2); label(pic2,"\small $x_2$", x2, WNW);
  dot(pic2,x3); label(pic2,"\small $x_3$", x3, E);
  label(pic2,"\small $r_2$", hautdroit, SE);
  label(pic2,"\small $r_1$", finlignr1, SSE);
  label(pic2,"\small $h$", (hh/2,0),S);
  add(pic2,origine2);
\end{asy}
\end{center}
\caption{Products of three half-turns (left: the centres are
on a line; right: they are in generic position)}
\label{FigureProd3Comm}
\end{figure}
Depending on whether $\sinh(h)\sinh(d(x_1,x_2))$ is less, equal
or greater than $1$, the hyperbolic motion $s_3s_2s_1$ is elliptic,
parabolic or hyperbolic (this follows from the classical formulas in
hyperbolic geometry, see eg \cite{Buser}, page 454, formulas 2.3.1 and 2.3.4).
It is noteworthy that this quantity $\sinh(h)\sinh(d(x_1,x_2))$ is equal to
$\frac{1}{2}\tr(s_3s_2s_1)$ and is invariant under permutations of
the $x_i's$. It is often called the $\Delta$-invariant of the
triangle $x_1,x_2,x_3$.

This can be applied to describe the geometry of commutators of hyperbolic elements
of $\PSL$ whose axes intersect in $\HH$.
If $A$, $B$ are two such isometries,
let $x_2$ be the intersection point of these axes (take $x_2$ to be any
point on this line, if $A$ and $B$ have the same axis).
There exists a unique point $x_1$ on the axis of $A$ such that,
if $s_1$, $s_2$ are the half-turns around $x_1$, $x_2$ we have
$A=s_2s_1$. Similarly, there exists a unique point $x_3$, on the axis of $B$,
such that $B=s_2s_3$, where $s_3$ is the half-turn around $x_3$.
Now,
\begin{equation}\label{TroisSymCommut}
[A,B]=B^{-1}A^{-1}BA=(s_3s_2s_1)^2.
\end{equation}
With this in head, Figure \ref{FigureProd3Comm}
gives a geometric picture of commutators of hyperbolic elements with
crossing axes.


\subsection{Sextuple configurations as representations of $\Gamma$}

The aim of this paragraph is to prove the following correspondence:
\begin{proposition}\label{correspondence}
  The $\PSL$-equivariant map
  $\pi^*\colon \Sex\rightarrow \Hom(\Gamma,\PSL)$
  descends, by restriction, to a homeomorphism 
  $\pi^*\colon \XX(\Gamma_o)\rightarrow X_{-2}(\Gamma)\cup X_0^+(\Gamma)\cup X_{2}(\Gamma)$.
\end{proposition}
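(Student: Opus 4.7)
The plan is to address four points in sequence: well-definedness of $\pi^*$, preservation of non-elementariness, identification of the image as $X_{-2}\cup X_0^+(\Gamma)\cup X_2$, and injectivity together with bicontinuity.

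For well-definedness, I set $A_1=s_{x_2}s_{x_1}$, $B_1=s_{x_2}s_{x_3}$, $A_2=s_{x_5}s_{x_4}$, $B_2=s_{x_5}s_{x_6}$ as dictated by the formulas of Section~\ref{SubsecMarkings}, and apply the identity~(\ref{TroisSymCommut}) of Section~\ref{SubsecTroiSym} to compute
\[
[A_1,B_1]=(s_{x_3}s_{x_2}s_{x_1})^2, \qquad [A_2,B_2]=(s_{x_6}s_{x_5}s_{x_4})^2.
\]
The constraint $s_{x_6}\cdots s_{x_1}=1$ forces $s_{x_6}s_{x_5}s_{x_4}=(s_{x_3}s_{x_2}s_{x_1})^{-1}$, whence $[A_2,B_2][A_1,B_1]=1$, which is precisely the surface relation in the right-to-left convention of the paper. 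The $\PSL$-equivariance is immediate, so $\pi^*$ descends to characters. Preservation of non-elementariness is then a general fact: $\pi_*(\Gamma)$ is the index-$2$ normal subgroup of $\Gamma_o$ consisting of even-length words in the $c_i$'s (the kernel of the parity homomorphism), and a finite-index subgroup of a non-elementary subgroup of $\PSL$ has the same limit set.

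For the image, I would observe that for $\rho=\pi^*(\rho_o)$, conjugation by the involution $\tau:=\rho_o(c_1)=s_{x_1}$ realizes on $\rho(\Gamma)$ the hyperelliptic automorphism $\varphi_*$ of $\Gamma$ (which is nothing but conjugation by $c_1$ in $\Gamma_o$, transported through~$\pi_*$). Every character in the image is thus fixed by the hyperelliptic involution, with the twisting realizable by a half-turn in $\PSL$. By \cite{Mod2}, this locus is exactly $X_{-2}\cup X_0^+\cup X_2$: on $X_0^-$ the twisting must be an orientation-reversing isometry of $\HH$ and cannot be realized inside $\PSL$, while on $X_{\pm 1}$ no such twisting exists by a parity obstruction tied to the Euler class. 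Ruling out $X_{\pm 1}$ is the main delicate point; a cross-check is that the three connected components of $\Sex$ (announced in the introduction) are expected to map to the three Euler-class strata, anchored by the singular configuration $x_1=\cdots=x_6$ which gives the trivial representation in $\overline{X_0^+}$.

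For injectivity, suppose $\pi^*(\rho_o)$ and $\pi^*(\rho_o')$ are $\PSL$-conjugate; after replacing $\rho_o'$ by a conjugate, assume $\pi^*(\rho_o)=\pi^*(\rho_o')=\rho$, and set $T_i:=\rho_o'(c_i)\rho_o(c_i)\in\PSL$. Normality of $\pi_*(\Gamma)$ in $\Gamma_o$ gives $c_i\gamma c_i\in\pi_*(\Gamma)$ for every $\gamma\in\pi_*(\Gamma)$; equating $\rho_o(c_i\gamma c_i)=\rho_o'(c_i\gamma c_i)$ in the paper's convention and using $c_i^2=1$ yields $T_i\rho(\gamma)=\rho(\gamma)T_i$, so $T_i$ centralizes the Zariski-dense group $\rho(\Gamma)$ and is therefore trivial. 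Hence $\rho_o'=\rho_o$, proving injectivity on characters. Surjectivity is the inverse construction: given $[\rho]$ in the target, the previous paragraph supplies the involution $\tau$; one then sets $\rho_o(c_1):=\tau$ and defines the remaining $\rho_o(c_i)$ from the known values of $\rho(a_j),\rho(b_j)$, checking the relations $c_i^2=1$ and $c_1\cdots c_6=1$ by construction. Continuity of $\pi^*$ is polynomial, and the inverse is continuous because $\tau$ is uniquely pinned down by the algebraic identity $\tau\rho(\gamma)\tau=\rho(\varphi_*(\gamma))$ in the non-elementary regime; this delivers the desired homeomorphism.
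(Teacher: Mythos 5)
Your overall skeleton is reasonable and several pieces are sound: the commutator computation $[A_1,B_1]=(s_{x_3}s_{x_2}s_{x_1})^2$ via Equation~(\ref{TroisSymCommut}), the index-two argument for non-elementarity, and the injectivity argument via the centralizer of the Zariski-dense image (the paper instead builds an explicit inverse, but your centralizer argument is correct and clean). However, there is a genuine gap exactly at the point you yourself flag as delicate: the exclusion of $X_{\pm1}(\Gamma)$ from the image. Your proposed mechanism --- that the locus of characters fixed by the hyperelliptic involution with twisting realized by a half-turn of $\PSL$ is precisely $X_{-2}(\Gamma)\cup X_0^+(\Gamma)\cup X_2(\Gamma)$ --- is not a correct characterization, and \cite{Mod2} does not assert it. Characters of Euler class $\pm1$ are \emph{also} fixed by the hyperelliptic involution via an inner twisting $\tau\in\PSL$ (the Euler class obstructs only the orientation-reversing option, which is what distinguishes $X_0^-$), and since $\varphi_*$ can be taken to be conjugation by $c_1$ with $c_1^2=1$, that $\tau$ automatically satisfies $\tau^2=1$. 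Such a $\rho$ does extend to $\Gamma_o$; what fails is that some $\rho_o(c_i)$ must then be trivial, so the extension does not land in $\Sex$. Your ``parity obstruction'' gestures at the right phenomenon but is never turned into an argument. The paper's actual argument is a one-liner you are missing: $\rho([a_1,b_1][a_2,b_2])$ is exhibited as the square $(s_{x_6}\cdots s_{x_1})^2$, so choosing arbitrary lifts of the $s_{x_i}$ to trace-zero matrices in $\SLdeuxR$ lifts the relator to $(\pm I)^2=I$; hence $\rho$ lifts to $\SLdeuxR$ and its Euler class is even.

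A second, related weakness is surjectivity. Granting the existence of the involution $\tau$ with $\tau\rho(\gamma)\tau=\rho(\varphi_*(\gamma))$, you still must check that \emph{all six} elements $\rho_o(c_i)$ are nontrivial half-turns (otherwise you do not get a point of $\Sex$); this is precisely where Proposition~\ref{Prop12Mod2} is needed, and your proposal never invokes it. The paper's route is different and more concrete: for $[\rho]$ in the target, $\tr[\rho(a_1),\rho(b_1)]\in(-\infty,2]$ forces $\rho(a_1),\rho(b_1)$ to be hyperbolic with crossing axes, and the three-half-turn decomposition of Section~\ref{SubsecTroiSym} then manufactures $x_1,\ldots,x_6$ explicitly (with a marking change in the degenerate case $[\rho(a_1),\rho(b_1)]=1$). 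You should either adopt that construction or supply the missing verification that your $\rho_o(c_i)$ are all nontrivial.
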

In the above statement we write ``homeomorphism'' because we are not
yet concerned with the rich structure of these spaces, but of course
$\pi^*$ carries their structures.

For completeness let us recall the following statement from \cite{Mod2}:
\begin{proposition}[Part of Proposition 1.2 of \cite{Mod2}]\label{Prop12Mod2}
  Let $[\rho]\in X_0^+(\Gamma)$. For every non-separating curve $a$, $\rho(a)$
  is hyperbolic or the identity. For every simple curves $a$, $b$
  such that $i(a,b)=1$, the trace of the commutator
  $[\rho(a),\rho(b)]$ is in $(-\infty,2]$. It is $2$ if and only
  if $[\rho(a),\rho(b)]=1$.
\end{proposition}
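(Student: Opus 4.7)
The plan is to combine the change-of-coordinates principle on $\Sigma$ with the identification of $X_0^+(\Gamma)$ as $\pi^*(\XX(\Gamma_o))$ supplied by Proposition~\ref{correspondence}. Both assertions are invariant under the action of $\mcg(\Sigma)$ on characters, and because the hyperelliptic involution generates the centre of $\mcg(\Sigma)$, its fixed locus $X_0^+(\Gamma)$ is stable setwise under this action. Since $\mcg(\Sigma)$ acts transitively on isotopy classes of non-separating simple closed curves, and transitively on oriented pairs $(a,b)$ of simple closed curves with $i(a,b)=1$ (a classical consequence of the change-of-coordinates principle), the two claims reduce to the case $a=a_1$, respectively $(a,b)=(a_1,b_1)$, for the marking chosen in Section~\ref{SubsecMarkings}.

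Writing $[\rho]=\pi^*[\tilde\rho]$ with $\tilde\rho(c_i)=s_{x_i}$ and using \eqref{PiStar} together with our reversed convention $\rho(\alpha\beta)=\rho(\beta)\rho(\alpha)$, one obtains
\[\rho(a_1)=s_{x_2}s_{x_1},\qquad \rho(b_1)=s_{x_2}s_{x_3}.\]
A product of two half-turns of $\HH$ is the identity if the two centres coincide, and otherwise the hyperbolic translation of length $2\,d(x_1,x_2)$ along the geodesic joining them. This establishes the first assertion.

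For the second, formula~\eqref{TroisSymCommut} applied to $A=\rho(a_1)$, $B=\rho(b_1)$ yields
\[[\rho(a_1),\rho(b_1)]=(s_{x_3}s_{x_2}s_{x_1})^2\]
in $\PSL$. Lifting each half-turn $s_{x_i}$ to $S_i\in\SLdeuxR$ with $S_i^2=-I$ and redoing the same algebraic computation gives $-(S_3S_2S_1)^2$ in $\SLdeuxR$. Writing $t=\tr(S_3S_2S_1)$ and using $\tr(M^2)=\tr(M)^2-2$ for any $M\in\SLdeuxR$, I conclude that
\[\tr[\rho(a_1),\rho(b_1)]=2-t^2\leq 2,\]
with equality iff $t=0$. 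By Cayley--Hamilton, $t=0$ forces $(S_3S_2S_1)^2=-I$, whence the commutator equals $I$ in $\SLdeuxR$ and is therefore trivial in $\PSL$.

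I expect the reduction by change of coordinates to be the most delicate point: it requires both the classical transitivity of $\mcg(\Sigma)$ on marked pairs of intersecting simple curves and the invariance of $X_0^+(\Gamma)$ under $\mcg(\Sigma)$, the latter being a consequence of the centrality of the hyperelliptic involution. The trace computation itself is elementary, but the sign in $-(S_3S_2S_1)^2$ is what matters: it converts the universal lower bound $\tr(M^2)\geq -2$ for $M\in\SLdeuxR$ into the upper bound $\tr[A,B]\leq 2$ asserted by the proposition.
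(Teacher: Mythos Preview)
Your argument is circular. In this paper Proposition~\ref{Prop12Mod2} is not proved; it is imported from \cite{Mod2} precisely in order to establish Proposition~\ref{correspondence}. Look at the proof of Proposition~\ref{correspondence}: to show that $\pi^*$ surjects onto $X_0^+(\Gamma)$, one starts from $[\rho]\in X_0^+(\Gamma)$ and needs to know that $\rho(a_1)$ and $\rho(b_1)$ are hyperbolic with crossing axes---equivalently, that $\tr[\rho(a_1),\rho(b_1)]\in(-\infty,2]$---before the points $x_1,x_2,x_3$ can even be defined. That input is exactly Proposition~\ref{Prop12Mod2}. So you cannot invoke Proposition~\ref{correspondence} to write $[\rho]=\pi^*[\tilde\rho]$ as the first step of a proof of Proposition~\ref{Prop12Mod2}.

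What your computation does establish, and correctly, is the easy direction: every class in the \emph{image} of $\pi^*$ satisfies the stated trace constraints. Your identity $\tr[\rho(a_1),\rho(b_1)]=2-t^2$ with $t=\tr(S_3S_2S_1)$, and the Cayley--Hamilton argument for the equality case, are both fine. But the actual content of Proposition~\ref{Prop12Mod2} lies in the converse---that a non-elementary, Euler-class-zero representation fixed pointwise by the hyperelliptic involution must obey these constraints---and for that one needs an argument independent of the sextuple description, as carried out in \cite{Mod2}. Within the logical structure of this paper the proposition is a cited input, not a consequence.
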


\begin{proof}[Proof of Proposition \ref{correspondence}]
  Let $z=(x_1,\ldots,x_6)\in\Sex$, and let $\rho=\pi^*(z)$.
  From Equation (\ref{PiStar}) and the equality $s_{x_6}\cdots s_{x_1}=1$ we compute, formally,
  \[
  \rho([a_1,b_1][a_2,b_2])=(s_{x_6}s_{x_5}s_{x_4})^2(s_{x_3}s_{x_2}s_{x_1})^2
  =(s_{x_6}\cdots s_{x_1})^2. \]
Moreover, the square
  enables to lift $\rho$ to a representation in $\SLdeuxR$;
  this means that $\rho$ has even Euler class.
  Now the representation of $\Gamma_o$ defined by $z$ is elementary
  if and only if $x_1,\ldots,x_6$ are on the same line, and restricting
  a representation to an index two subgroup does not change elementarity.
  This implies that the image of $\XX(\Gamma^o)$ is contained in
  $X_{-2}(\Gamma)\cup X_0^+(\Gamma) \cup X_{2}(\Gamma)$, because, by its construction, the
  conjugacy class of $\rho$ is invariant under the hyperelliptic involution.

  Conversely, let $[\rho]\in X_{-2}(\Gamma)\cup X_0^+(\Gamma) \cup X_{2}(\Gamma)$.
  Suppose for simplicity that $[\rho(a_1),\rho(b_1)]\neq 1$.
  Recall that for $A,B\in\PSL$, the trace of $[A,B]$ is
  well defined, and it is in $(-\infty,2]$ if and only if $A$, $B$
  are hyperbolic and their axes cross each other.
  Then $\rho(a_1)$ and $\rho(b_1)$ are hyperbolic with crossing axes;
  they define three points $x_1$, $x_2$ and $x_3$
  exactly as in the last paragraph of Section \ref{SubsecTroiSym}.
  Similarly, define
  $x_4$, $x_5$ and $x_6$ corresponding to $\rho(a_2)$ and $\rho(b_2)$.
  Then we readily check that $\pi^*(x_1,\ldots,x_6)=\rho$.

  Now if $[\rho(a_1),\rho(b_1)]=1$, we may (for example) first apply an
  explicit automorphism $\psi$ of $\Gamma$ to change
  $(a_1,b_1)$ into $(a,b)$ with
  $a,b\in\{a_1,b_1,a_2,b_2\}$ and $i(a,b)=1$, so that
  $[\rho(a),\rho(b)]\neq 1$ (otherwise $\rho$ would be elementary),
  make the above construction, and come back to the original marking by
  applying $\psi^{-1}$.
  Thus, we have constructed a map
  $X_{-2}(\Gamma)\cup X_0^+(\Gamma) \cup X_{2}(\Gamma)\rightarrow \XX(\Gamma_o)$,
  and we easily check that this map, and $\pi^*$, are the
  inverse to each other.
\end{proof}


\subsection{Different types of sextuple configurations}\label{SubsecTricho}

The aim of this paragraph is to have in head a picture of every
possible configuration of sextuples
$z=(x_1,\ldots,x_6)\in\Sex$, and to set up some notation for the use
of the following section.

In the defining equality we may group the terms two by two,
$(s_{x_6}s_{x_5})(s_{x_4}s_{x_3})(s_{x_2}s_{x_1})=1$. For all
$i\in\{1,\ldots,6\}$ we will write $a_{i,i+1}=d(x_i,x_{i+1})$,
with cyclic notation.
If, say, $x_1=x_2$ then the above relation implies that
$x_3$, $x_4$, $x_5$, $x_6$
are on the same line. In this case, it is easy to deform $z$
among sextuple configurations into a singular configuration.
Now we want to describe the generic configurations, in which
$a_{i,i+1}\neq 0$ for all $i$. We then denote by $D_{i,i+1}$
the line joining $x_i$ and $x_{i+1}$.
Note that $s_{i+1}s_i$ is a hyperbolic translation along that line.

It is elementary and classical to picture the product of
two given hyperbolic motions, say $s_{x_2}s_{x_1}$ and $s_{x_4}s_{x_3}$.
If their axes $D_{12}$ and $D_{34}$ cross each other in $\HH$,
we decompose each of these two motions into two rotations of angle
$\pi$, one of them being around their intersection point (as in
Paragraph \ref{SubsecTroiSym}). The resulting product is the product
of the two other half turns. If $D_{12}$ and $D_{34}$ are disjoint in
$\HH\cup\partial\HH$ we decompose each of the two motions into two
reflections along lines, one of them being the common perpendicular
$H_{23}$ of $D_{12}$ and $D_{34}$. Then $s_{x_6}s_{x_5}$ is the product
of the reflections along the two other lines, which, therefore, cannot
intersect each other in $\HH\cup\partial\HH$. It follows that the
lines $D_{12}$, $D_{34}$ and $D_{56}$, together with the respective
common perpendiculars $H_{23}$, $H_{45}$ and $H_{61}$, form a
right-angled hexagon, which may be regular or skew, depending on whether
the directions of the motions $s_{x_2}s_{x_1}$ and $s_{x_4}s_{x_3}$
agree or disagree (this makes sense for instance by parallel transport
along $H_{23}$). The only remaining case is when $D_{12}$ and $D_{34}$
meet at $\partial\HH$; the motions $s_{x_2}s_{x_1}$ and $s_{x_4}s_{x_3}$
are then contained in a parabolic subgroup of $\PSL$.
The following lemma summarizes the above discussion.
\begin{lemma}\label{tricho}
  Let $z=(x_1,\ldots,x_6)\in\Sex^\times$ be a non-aligned configuration
  with $x_1\neq x_2$, $x_3\neq x_4$ and
  $x_5\neq x_6$. Then one of the following holds.
  \begin{itemize}
  \item[(TRI)] The lines $D_{12}$, $D_{34}$ and $D_{56}$ form a triangle
    with lengths $a_{12}$, $a_{34}$ and $a_{56}$.
  \item[(PAR)] The lines $D_{12}$, $D_{34}$ and $D_{56}$ meet at (at least)
    one common point in $\partial\HH$. In this case, one of the lengths
    $a_{12}$, $a_{34}$ and $a_{56}$ is the sum of the two others.
  \item[(SKH)] The lines $D_{12}$, $D_{34}$ and $D_{56}$ are pairwise disjoint
    in $\overline{\HH}$ and together with their respective common
    perpendiculars they form a skewed right-angled hexagon. In this case,
    one of the lengths $a_{12}$, $a_{34}$ and $a_{56}$ is greater than
    the sum of the two others.
  \item[(HEX)] The lines $D_{12}$, $D_{34}$ and $D_{56}$ are pairwise
    disjoint in $\overline{\HH}$ and with their respective common
    perpendiculars they form a right-angled hexagon.
  \end{itemize}
\end{lemma}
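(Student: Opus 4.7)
The plan is to study the relative position of the three axes $D_{12}$, $D_{34}$, $D_{56}$ from the identity
\[ s_{x_6}s_{x_5}\cdot s_{x_4}s_{x_3}\cdot s_{x_2}s_{x_1}=1, \]
using that each $s_{x_{i+1}}s_{x_i}$ is a nontrivial hyperbolic translation along $D_{i,i+1}$ of translation length $2a_{i,i+1}$. I split according to the position of $D_{12}$ relative to $D_{34}$; the three generic possibilities for distinct geodesics (crossing in $\HH$, sharing one ideal endpoint, ultraparallel) are exhaustive, and in each of them the equation determines $s_{x_6}s_{x_5}$ and hence the line $D_{56}$.

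If $D_{12}\cap D_{34}=\{p\}\subset\HH$, I decompose as in Section~\ref{SubsecTroiSym}: $s_{x_2}s_{x_1}=s_ps_{y_1}$ and $s_{x_4}s_{x_3}=s_{y_3}s_p$, with $y_1\in D_{12}$ and $y_3\in D_{34}$ at respective distances $a_{12}$ and $a_{34}$ from $p$. The relation then reads $s_{x_6}s_{x_5}=s_{y_1}s_{y_3}$, so $D_{56}$ is the line through $y_1,y_3$ and $a_{56}=d(y_1,y_3)$: the points $p,y_1,y_3$ form the desired triangle, giving case~(TRI). If instead $D_{12}$ and $D_{34}$ share an ideal endpoint $\xi\in\partial\HH$, both translations, and hence their product, belong to the Borel subgroup $B_\xi$ fixing $\xi$; thus $s_{x_6}s_{x_5}\in B_\xi$ is forced to be a translation along a line through $\xi$, so $D_{56}$ also passes through $\xi$. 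Projecting $B_\xi$ onto its translation-length quotient $\R$ maps each $s_{x_{i+1}}s_{x_i}$ to $\pm 2a_{i,i+1}$, the sign recording the direction of translation at $\xi$, so the relation becomes $\pm 2a_{12}\pm 2a_{34}\pm 2a_{56}=0$; one length is therefore the sum of the other two, which is case~(PAR).

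It remains to treat the case where $D_{12}$ and $D_{34}$ are ultraparallel, with common perpendicular $H_{23}$. I decompose $s_{x_2}s_{x_1}=r_{H_{23}}r_{L_1}$ and $s_{x_4}s_{x_3}=r_{L_3}r_{H_{23}}$ with $L_1\perp D_{12}$ and $L_3\perp D_{34}$, so that $s_{x_4}s_{x_3}\cdot s_{x_2}s_{x_1}=r_{L_3}r_{L_1}$ equals the nontrivial hyperbolic element $(s_{x_6}s_{x_5})^{-1}$. The classification of products of two reflections rules out $L_1=L_3$ (identity), $L_1$ and $L_3$ crossing in $\HH$ (rotation) and meeting at $\partial\HH$ (parabolic); the only possibility is that $L_1$ and $L_3$ are ultraparallel, and $D_{56}$ is then their common perpendicular. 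Applying the same decomposition to the pairs $(D_{34},D_{56})$ and $(D_{56},D_{12})$ produces the remaining perpendiculars $H_{45}$ and $H_{61}$, so the six geodesics arrange into a right-angled hexagon. Whether $L_1$ and $L_3$ lie on opposite sides of $H_{23}$ (orientations of the three translations compatible around the hexagon) or on the same side distinguishes the convex case~(HEX) from the skew case~(SKH), and the standard trigonometric identities for a skew right-angled hexagon yield, in the latter, the inequality $a_{i,i+1}>a_{j,j+1}+a_{k,k+1}$ for some $i$.

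The main obstacle is this final dichotomy: turning the combinatorics of ``which side of $H_{23}$'' each $L_i$ lies on into the precise length statements of (HEX) and (SKH) requires carefully tracking the signs of the translations $s_{x_{i+1}}s_{x_i}$ through each decomposition into reflections and then invoking the hexagon identities. Degenerate subcases such as $L_1=L_3$, coincidences $D_{12}=D_{34}$, or aligned $L_i$'s are easily excluded because they would force some $s_{x_{i+1}}s_{x_i}$ to be trivial (contradicting $a_{i,i+1}\neq 0$) or the six points to be collinear (contradicting $z\in\Sex^\times$).
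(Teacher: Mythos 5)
Your argument is correct and follows essentially the same route as the paper's (the discussion in Section~\ref{SubsecTricho} preceding the lemma): a trichotomy on the relative position of $D_{12}$ and $D_{34}$, with the half-turn decomposition through the intersection point in the crossing case, the decomposition into reflections through the common perpendicular $H_{23}$ in the ultraparallel case, and confinement to the parabolic (Borel) subgroup in the ideal-tangency case. Your explicit sign computation in the translation-length quotient for (PAR) and your handling of the degenerate coincidences are slightly more detailed than the paper's prose, but the underlying proof is the same, including the fact that the (HEX)/(SKH) length dichotomy is ultimately delegated to the standard right-angled hexagon identities.
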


In each of the three situations (TRI), (PAR) and (SKH) it is easy to
deform $z$ among sextuple configurations into a singular configuration;
these cases, together with the degenerate cases above
and the aligned configurations, form the
connected component $X_0$. Of course, we will denote by $\Sex_0$ the
set of corresponding sextuple configurations.
Each case comes with two possible orientations (and a choice of
repelling or attracting point in case (PAR)). In case (HEX), these two
orientations
correspond to the two components of
$X_{\pm 2}(\Gamma)$ via
Proposition~\ref{correspondence}.
\begin{figure}[ht]
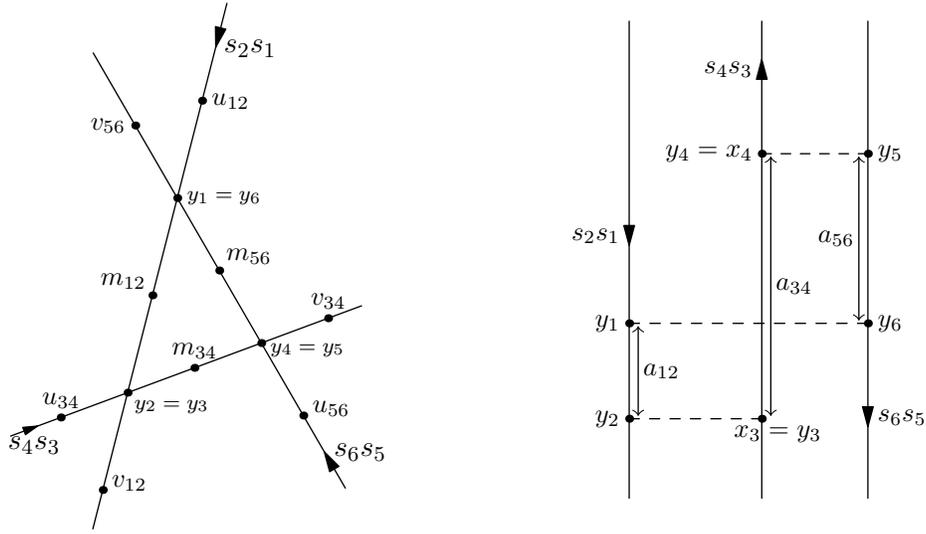

\begin{center}
\begin{asy}
  import geometry;
  unitsize(1pt);

  picture TRI; real redux=0.72;
  pair A1=redux*(13,51), A2=redux*(22,-38), A3=A1+A2;

  // 
  point y3=(0,0), m1=y3+A1, v1=y3-A1, y2=m1+A1, u1=y2+A1;
  dot(TRI,y3); dot(TRI,m1); dot(TRI,v1); dot(TRI,y2); dot(TRI,u1);
  draw(TRI,(u1+A1)--(v1-0.4*A1), Arrow(position=1/12));
  label(TRI,"\small $s_2s_1$",u1+A1/2,dir(12));
  // 
  point v3=y2-A2, m3=y2+A2, y1=m3+A2, u3=y1+A2;
  dot(TRI,v3); dot(TRI,m3); dot(TRI,y1); dot(TRI,u3);
  draw(TRI,(u3+A2)--(v3-A2),Arrow(position=1/12));
  label(TRI,"\small $s_6s_5$",u3+A2/2,E);
  // 
  point u2=y3-A3, m2=y3+A3, v2=y1+A3;
  dot(TRI,u2); dot(TRI,m2); dot(TRI,v2);
  draw(TRI,(u2-A3*3/4)--(v2+A3/2),Arrow(position=1/13));
  label(TRI,"\small $s_4s_3$",u2-A3*2/5,S);

  // 
  label(TRI,"\footnotesize $m_{34}$",m2,N);
  label(TRI,"\scriptsize $y_2=y_3$",y3,SE);
  label(TRI,"\footnotesize $m_{12}$",m1,NW);
  label(TRI,"\scriptsize $y_1=y_6$",y2,E);
  label(TRI,"\footnotesize $m_{56}$",m3,NE);
  label(TRI,"\scriptsize $y_4=y_5$",y1,ESE);
  label(TRI,"\footnotesize $u_{12}$",u1,E);
  label(TRI,"\footnotesize $u_{34}$",u2,N);
  label(TRI,"\footnotesize $u_{56}$",u3,ENE);
  label(TRI,"\footnotesize $v_{12}$",v1,ENE);
  label(TRI,"\footnotesize $v_{34}$",v2,N);
  label(TRI,"\footnotesize $v_{56}$",v3,W);

  add(TRI,(0,0));

  picture PAR;
  real abs1=-50, abs2=0, abs3=40, haut=180, h3=30, h4=haut-50;
  point x3=(abs2,h3), x4=(abs2,h4);
  real a1=36, a2=h4-h3-a1;
  point y1=(abs1,h3+a1), y2=(abs1,h3), y5=(abs3,h4), y6=(abs3,h3+a1);

  draw (PAR,(abs1,haut)--(abs1,0),Arrow(Relative(0.47)));
  label (PAR,"\footnotesize $s_2s_1$",(abs1,0.55*haut),W);
  draw (PAR,(abs2,0)--(abs2,haut),Arrow(Relative(0.92)));
  label (PAR,"\footnotesize $s_4s_3$",(abs2,0.9*haut),W);
  draw (PAR,(abs3,haut)--(abs3,0),Arrow(Relative(0.85)));
  label (PAR,"\footnotesize $s_6s_5$",(abs3,0.17*haut),E);

  dot(PAR,x3); dot(PAR,x4); dot(PAR,y1);
  dot(PAR,y2); dot(PAR,y5); dot(PAR,y6);
  label(PAR,"\footnotesize $y_4=x_4$",x4,W);
  label(PAR,"\footnotesize $x_3=y_3$",x3-(13,0),SE);
  label(PAR,"\footnotesize $y_1$",y1,W);
  label(PAR,"\footnotesize $y_2$",y2,W);
  label(PAR,"\footnotesize $y_5$",y5,E);
  label(PAR,"\footnotesize $y_6$",y6,E);
  //draw (PAR,(abs1,h3)--(abs3,h3),dashed);
  draw (PAR,y2--x3,dashed);
  draw (PAR,(abs1,h3+a1)--(abs3,h3+a1),dashed);
  draw (PAR,(abs2,h4)--(abs3,h4),dashed);

  draw(PAR,"\footnotesize $a_{12}$", (y1+(3.4,-1))--(y2+(3.4,1)),0.5*E,Arrows(TeXHead));
  draw(PAR,"\footnotesize $a_{34}$", (x4+(3.4,-1))--(x3+(3.4,1)),0.5*E,Arrows(TeXHead));
  draw(PAR,"\footnotesize $a_{56}$", (y5+(-3.4,-1))--(y6+(-3.4,1)),0.5*W,Arrows(TeXHead));

  // 
  point origPar=(240,-40);
  add(PAR,origPar);
\end{asy}
\caption{The cases (TRI), to the left, and (PAR), to the right}
\label{FiguresTRIPAR}
\end{center}
\end{figure}

Figures \ref{FiguresTRIPAR} and \ref{FigureHexT}
illustrate the cases (TRI), (PAR) and (SKH) with
some extra notation that will be used in the next section.
In particular, in every case we will consider the relevant intersection
points $y_1$, \ldots, $y_6$; they satisfy the relation
$s_{x_{i+1}}s_{x_i}=s_{y_{i+1}}s_{y_i}$ for $i=1, 3, 5$.

\begin{figure}[ht]
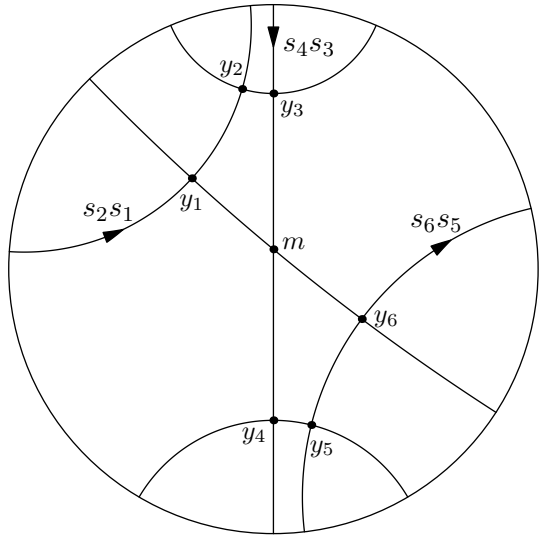

\begin{center}
\begin{asy}
  import math;
  import hyperbolic_geometry;
  real taille = 200; size(taille,taille);

  hyperbolic_point y3=hyperbolic_point(1.6,90), y4=hyperbolic_point(1.3,-90);
  hyperbolic_line droite34 = hyperbolic_line(y3,y4);
  hyperbolic_line droite23 = hyperbolic_normal(droite34,y3);
  hyperbolic_line droite45 = hyperbolic_normal(droite34,y4);
  hyperbolic_point m=hyperbolic_point(0.15,90);
  hyperbolic_point const = hyperbolic_point(8,134);
  hyperbolic_line droite16 = hyperbolic_line(m,const);
  hyperbolic_line droite12 = common_perpendicular(droite23,droite16);
  hyperbolic_line droite56 = common_perpendicular(droite45,droite16);
  hyperbolic_point y1 = intersection(droite12,droite16);
  hyperbolic_point y2 = intersection(droite12,droite23);
  hyperbolic_point y5 = intersection(droite45,droite56);
  hyperbolic_point y6 = intersection(droite16,droite56);

  draw(unitcircle);
  draw(droite34,Arrow(Relative(0.08)));
  draw(droite16); draw(droite23); draw(droite45);
  draw(reverse(droite12.to_path()),Arrow(Relative(0.3)));
  draw(droite56,Arrow(Relative(0.8)));
  label("\small $s_4s_3$",relpoint(droite34.to_path(),0.08),E);
  label("\small $s_2s_1$",relpoint(droite12.to_path(),0.7),NNW);
  label("\small $s_6s_5$",relpoint(droite56.to_path(),0.8),NNW);

  dot(m); label("\footnotesize $m$",m.get_euclidean(),ENE);
  dot(y1); label("\footnotesize $y_1$",y1.get_euclidean(),1.7*S);
  dot(y2); label("\footnotesize $y_2$",y2.get_euclidean(),1.4*NNW);
  dot(y3); label("\footnotesize $y_3$",y3.get_euclidean(),SE);
  dot(y4); label("\footnotesize $y_4$",y4.get_euclidean(),SW);
  dot(y5); label("\footnotesize $y_5$",y5.get_euclidean(),1.7*SSE);
  dot(y6); label("\footnotesize $y_6$",y6.get_euclidean(),1.3*E);
\end{asy}
\caption{The case (SKH)}
\label{FigureHexT}
\end{center}
\end{figure}

\begin{remark}\label{RmqTriangle}
  In Figure \ref{FiguresTRIPAR}
  we have drawn the triangular case as a Euclidean one for
  convenience. It is noteworthy however that
  if $\HH$ was replaced by $\R^2$ we would have an extra degree of freedom:
  the side lengths of the triangle formed by $D_{12}$, $D_{34}$ and $D_{56}$
  would only be proportional to $a_{12}$, $a_{34}$ and $a_{56}$ by a free
  positive constant. This phenomenon will have a significant role in
  Section~\ref{neighbourhood}.
\end{remark}


\section{Reduction of size}\label{SectionAlgo}

In this section we will first prove that the space $X_0$ is
contractible, then we will prove Theorem~\ref{SexMinimal};
we will end this section with miscellaneous considerations.
The strategy of both proofs consists in decreasing the relative
distances between the points $x_i$, but as the considerations of the
first proof are more differential and the second more discrete
in nature, we will use different functions to measure this size.
If $z=(x_1,\ldots,x_6)\in\Sex$, as in Section~\ref{SubsecTricho} let
$a_{i,i+1}=d(x_i,x_{i+1})$ for all $i$ with cyclic notation, and put
\[
F(z)=\prod_{i=1}^6 \cosh(a_{i,i+1}),
\quad
A(z)=a_{12}+a_{34}+a_{56} \quad \text{and} \quad
B(z)=a_{23}+a_{45}+a_{61}.
\]

\subsection{The space $X_0$ is contractible}\label{topsab1}

An elementary computation shows that the differentiable map
$(\HH)^6\rightarrow \PSL$ defined by
$(x_1,\ldots,x_6)\mapsto s_{x_6}\cdots s_{x_1}$ has
surjective differential at every point such that $x_i\neq x_j$ for
some $i,j$. Thus, the subspace $\Sex^*$
of non-singular sextuple configurations is a smooth submanifold of
$(\HH)^6$, and the map $F\colon\Sex^*\rightarrow\R$ is
differentiable.

\begin{lemma}
  The map $F$ has no critical points on $\Sex_0^*$.
\end{lemma}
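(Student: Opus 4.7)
The plan is to exhibit, at every non-singular configuration $z \in \Sex_0^*$, an explicit infinitesimal deformation tangent to $\Sex^*$ along which $dF \neq 0$.

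At $z = (x_1, \ldots, x_6)$ with $x_i \neq x_{i+1}$ for every cyclic $i$, for each $i$ let $T_i$ denote the infinitesimal motion that simultaneously translates both $x_i$ and $x_{i+1}$ at unit speed along the common geodesic $D_{i,i+1}$ in the direction from $x_i$ to $x_{i+1}$. Because translating both points along their common line preserves the product $s_{x_{i+1}}s_{x_i}$, the motion $T_i$ preserves the relation $s_{x_6}\cdots s_{x_1}=1$ and is tangent to $\Sex^*$. When the six lines $D_{i,i+1}$ are pairwise distinct, the vectors $T_1,\ldots,T_6$ together with the infinitesimal $\PSL$-action span the $9$-dimensional tangent space to $\Sex^*$; since $F$ is $\PSL$-invariant, $F$ is critical at $z$ if and only if $dF(T_i)=0$ for every $i$.

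Writing $\alpha_j$ for the angle at $x_j$ between the directions towards $x_{j-1}$ and $x_{j+1}$, the formula $d\log F = \sum_i \tanh(a_{i,i+1})\, da_{i,i+1}$ combined with the first variation of distance gives
\[
\frac{dF(T_i)}{F(z)} \;=\; \tanh(a_{i+1,i+2})\cos\alpha_{i+1} \;-\; \tanh(a_{i-1,i})\cos\alpha_i.
\]
A brief telescoping shows that the simultaneous vanishing of these six quantities is equivalent to
\[
\tanh(a_{j-1,j})\,\tanh(a_{j,j+1})\,\cos\alpha_j \;=\; C, \quad j = 1, \ldots, 6,
\]
for a common constant $C$, which by the hyperbolic law of cosines applied to the triangle $(x_{j-1}, x_j, x_{j+1})$ reads
\[
\cosh(a_{j-1,j+1}) \;=\; D\cdot\cosh(a_{j-1,j})\,\cosh(a_{j,j+1})
\]
for some $D = 1 - C$ and every $j$. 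I would then rule out such a rigid system on a non-singular $z$ in $X_0$ by splitting into the cases of Lemma~\ref{tricho}: in cases (TRI), (PAR), (SKH), and aligned, the closure $s_{x_6}\cdots s_{x_1}=1$ imposes additional rigidity so that the six scaling identities, combined with the triangular, parabolic, or skew-hexagonal closure, force all $a_{i,i+1}$ to degenerate.

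Non-singular pinched configurations $(p,p,q,q,r,r)$ need separate treatment, since the $T_i$ degenerate there; on this locus $F = \cosh d(p,q)\cosh d(q,r)\cosh d(r,p)$ depends only on the triangle $(p,q,r)$, the three-dimensional subspace of tangent vectors that move each coincident pair coherently lies in $T\Sex^*$, and a direct computation shows that $dF = 0$ on it would require, at each of $p,q,r$, the two unit vectors towards the other two points to be anti-parallel, which is impossible unless $p=q=r$. The main obstacle is then the case analysis for (TRI) and (SKH): combining the six multiplicative identities with the closure of the hyperbolic triangle or right-angled hexagon to obtain a contradiction requires careful bookkeeping of the signs of $\cos\alpha_j$ and the positions of each $x_i$ on its side, but is made tractable by the strong constraint that $\cosh(a_{j-1,j+1})/[\cosh(a_{j-1,j})\cosh(a_{j,j+1})]$ is independent of $j$.
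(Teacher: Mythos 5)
Your setup is sound and closely parallels the paper's own starting point: the translation flows $T_i$ along the lines $D_{i,i+1}$ are exactly the flows $\phi_{i,i+1}^t$ the paper uses, your first-variation formula for $d\log F(T_i)$ is correct, and the telescoping to $\tanh(a_{j-1,j})\tanh(a_{j,j+1})\cos\alpha_j=C$ does follow from the six vanishing conditions. But the proof stops exactly where the real work begins. The decisive step --- showing that the system $\cosh(a_{j-1,j+1})=D\,\cosh(a_{j-1,j})\cosh(a_{j,j+1})$ is incompatible with a non-singular configuration in each of the cases (TRI), (PAR), (SKH) and the aligned case --- is only announced (``I would then rule out\ldots''), and you yourself flag it as ``the main obstacle'' requiring ``careful bookkeeping'' that you do not perform. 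This is not a routine verification: nothing you have written excludes, say, a triply symmetric triangular configuration a priori, and the contradiction genuinely needs the geometric input the paper extracts (orthogonal projections onto $D_{i,i+1}$ decrease distances, giving $2x\le y+z$ cyclically, hence $x=y=z=0$, followed by a separate argument in each of the three cases). In the aligned case your angle formalism degenerates entirely, since all six lines coincide, so the rescaling argument must be supplied separately. As it stands, the argument is a reduction of the lemma to an unproved claim.

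A second, smaller gap: your ``separate treatment'' of degenerate configurations covers only the fully pinched case $(p,p,q,q,r,r)$. If exactly one pair coincides, say $x_1=x_2$, the relation forces $x_3,\ldots,x_6$ to lie on a common line without being pairwise equal; such configurations are non-singular, lie in $\Sex_0^*$, are not reached by your main argument (the flow $T_1$ is undefined and the six-fold telescoping breaks), and are not of the form $(p,p,q,q,r,r)$. The paper handles them by translating $x_3,\ldots,x_6$ simultaneously along their common line towards the projection of $x_1$, which decreases $F$ at first order.
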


\begin{proof}
  By contradiction, suppose $z=(x_1,\ldots,x_6)\in\Sex_0^*$
  is a critical point for $F$.

  If $z$ is an aligned configuration, we can push the points
  together by rescaling simultaneously $a_{i,i+1}$ for all $i$
  by the same factor, thus decreasing $F$ at first order, a
  contradiction.

  Suppose now that $x_i\neq x_{i+1}$ for all $i$.
  Consider the line $D_{12}$, oriented from $x_1$ to $x_2$ and
  consider a flow $\phi_{12}^t$ on a neighbourhood
  of $z$ in $\Sex$
  replacing $x_1$ and $x_2$ by their image by the translation of
  length $t$ along $D_{12}$. Let $x_6'$ and $x_3'$ be the orthogonal
  projections of $x_6$ and $x_3$ on $D_{12}$.
  By the hyperbolic Pythagorean theorem, we have
  $\cosh(a_{61})=\cosh(d(x_6,x'_6))\cosh(d(x'_6,x_1))$ and
  $\cosh(a_{23})=\cosh(d(x_3,x'_3))\cosh(d(x'_3,x_2))$,
  hence
  \[
  F(\phi_{12}^t(z))=C\cosh(d(x_6',x_1)\pm t)\cosh(d(x_2,x_3')\pm t),
  \]
  where
  $C=\cosh(a_{12})\cosh(a_{34})\cosh(a_{45})\cosh(a_{56})
  \cosh(d(x_6,x_6')\cosh(x_3,x_3')$
  does not depend on $t$, and where the signs before $t$ depend on the
  order of the points $x_1$, $x_2$, $x_6'$ and $x_3'$ on the line $D_{12}$.
  In either case, by deriving the above expression, we check that the
  criticality of $F$ at $z$ implies that the midpoints of the segments
  $[x_1,x_2]$ and $[x_3',x_6']$ coincide; similar conclusions hold if we
  cyclically permute the~$x_i$'s.

  Put $x=d(x_1,y_1)$, $y=d(x_3,y_3)$ and $z=d(x_5,y_5)$. Note that we
  also have $x=d(x_2,y_2)$, $y=d(x_4,y_4)$ and $z=d(x_6,y_6)$. So the
  midpoint of $[x_1,x_2]$ is at distance $x$ from $m_{12}$,
  the midpoint of $[y_1,y_2]$.

  If we are in case (TRI) or (SKH), the projections of $y_3$ and
  $y_6$ on $D_{12}$ are $y_2$ and $y_1$.
  Since orthogonal projections decrease
  distances, the midpoint of $[x_3',x_6']$ is at distance
  less than
  $\frac{y+z}{2}$ of $m_{12}$, with equality if and only if $y=z=0$.
  Thus we have the inequality $2x\leq y+z$, and its three cyclic
  companions. It follows that $x=y=z=0$.
  In case (TRI) this contradicts that $x_2\neq x_3$. In case (SKH)
  we can shorten $a_{34}$ while fixing $a_{12}$ and $a_{56}$, by
  classical hyperbolic formulas (see eg \cite{Buser}, page 454)
  this shortens $a_{23}$, $a_{45}$ and $a_{61}$, contradicting that
  $z$ is a critical point of~$F$.

  If we are in case (PAR), we again contradict that $z$ is a critical
  point of $F$ by pushing simultaneously all the points $x_i$ towards
  the common point at infinity of $D_{12}$, $D_{34}$ and $D_{56}$:
  this leaves $a_{12}$, $a_{34}$ and $a_{56}$ invariant while decreasing
  the other three distances.

  Finally, suppose, say, that $x_1=x_2$.
  Then $x_3$, \ldots, $x_6$ lie in a same line $D$, well-defined since
  the configuration is not aligned.
  Let $x$ be the
  orthogonal projection of $x_1$ on $D$. Then we may push simultaneously
  the points $x_3$, \ldots, $x_6$ towards $x$ along $D$, reducing all the
  distances $a_{i,i+1}$ at the first order, thus $z$ cannot be a
  critical point of~$F$.
\end{proof}
Every element of $\Sex^*$ has a closed $\PSL$-orbit;
it follows that $X^*$ is smooth: the only singularity of $X$
is the singular configuration. It is noteworthy that although the aligned,
non-singular configurations are smooth points of $X^*$, their
images in $X_0(\Gamma)$ are non-smooth points of the character variety
of the genus two surface group.

Now, the map $F\colon\Sex_0^*\rightarrow(1,+\infty)$ is $\PSL$-invariant,
hence it descends to a map
$f\colon X_0^*\rightarrow(1,+\infty)$ which still has surjective
differential at every point. Obviously $f$ is onto; it is also
proper.
Therefore, $X_0^*$ is diffeomorphic
to $(1,+\infty)\times f^{-1}(\{r\})$ for any $r>1$, and $X_0$ is homeomorphic to a cone
over $f^{-1}(\{r\})$. This proves that $X_0$ is contractible,
and homeomorphic to a neighbourhood of the singular configuration.
This neighbourhood will be described precisely in
Section~\ref{neighbourhood}.


\subsection{An effective method}\label{SectEffective}

\subsubsection{The set $U$}

In this section we consider the subset $U$ of non-pinched
isometry classes of sextuple configurations.
Before proving Theorem \ref{SexMinimal}, let us prove the following
statement, which will be needed in Section~\ref{SectionDyn}.
\begin{observation}\label{UConnexe}
  The set $U$ is connected, is dense and has full measure in $X_0$.
\end{observation}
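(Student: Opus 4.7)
The plan is to describe $X_0\setminus U$ as a countable union of closed subsets of codimension at least three in $X_0$, from which all three assertions will follow by standard arguments.

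First I would set notation: let $\mathcal{P}_0\subset X_0$ denote the image of the set of \emph{standard} pinched configurations $(x_1,x_1,x_3,x_3,x_5,x_5)$, $x_i\in\HH$. It is parameterised by $\HH^3$ modulo the diagonal action of $\PSL$, so it has dimension at most $3$ in the $6$-dimensional space $X_0$, and it is closed in $X_0$ (being cut out by the equations $x_1=x_2$, $x_3=x_4$, $x_5=x_6$). Since $\ModSo$ is countable (it is finitely generated by the leapfrog moves, as recalled in Section~\ref{SubsecMarkings}) and acts by homeomorphisms, the pinched locus is
\[
X_0\setminus U \;=\; \bigcup_{\gamma\in\ModSo}\gamma\cdot\mathcal{P}_0,
\]
a countable union of closed subsets of codimension at least~$3$.

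Full measure is then immediate, since each $\gamma\cdot\mathcal{P}_0$ is contained in a smooth codimension-$3$ submanifold of $X_0^*$ (plus the singular point, which is negligible), hence has Lebesgue measure zero. Density follows from the Baire category theorem: $X_0$ is locally compact Hausdorff, each $\gamma\cdot\mathcal{P}_0$ is closed with empty interior, so the union is meager and $U$ is dense.

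For connectedness I would first observe that the smooth part $X_0^*=X_0\setminus\{\text{singular}\}$ is path-connected. Indeed the non-aligned subset $\XX\cap X_0$ is identified, through Proposition~\ref{correspondence}, with $X_0^+(\Gamma)$, which is connected by Proposition~1.2 of \cite{Mod2}; being open and dense in the smooth, hence locally path-connected, manifold $X_0^*$, this forces $X_0^*$ itself to be path-connected. Then, given $p,q\in U$, I would take any smooth path from $p$ to $q$ in $X_0^*$: for each fixed $\gamma\in\ModSo$, a generic small smooth perturbation avoids $\gamma\cdot\mathcal{P}_0$ by a dimension count ($1+3<6$), and a Baire argument in the space of smooth paths with fixed endpoints produces a single perturbation that simultaneously avoids all countably many $\gamma\cdot\mathcal{P}_0$, hence lies entirely in $U$.

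The main technical delicacy I foresee is checking that the transversality estimate remains valid along the possibly singular strata of $\gamma\cdot\mathcal{P}_0$ (e.g., the singular configuration itself, where the stabiliser is positive-dimensional), but such strata are of strictly lower dimension and so the codimension is only larger there, keeping the argument intact.
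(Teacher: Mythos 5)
Your proposal is correct and follows essentially the same route as the paper: both identify $X_0\setminus U$ as the countable $\ModSo$-orbit of the closed, codimension-$3$ locus of standard pinched configurations and then invoke the standard measure, Baire, and transversality consequences (which the paper leaves implicit in the phrase ``the statement of the observation follows''). The only difference is that you spell out these routine arguments, including the path-perturbation step for connectedness, in full.
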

\begin{proof}
  Consider the following set
  $P=\{(x_1,\ldots,x_6)\in\Sex\,|\,x_1=x_2, x_3=x_4, x_5=x_6, x_2\neq x_3\}$
  of pinched, non-singular configurations. It is a submanifold of dimension
  $6$ of $\Sex^*$, and it descends to a submanifold of dimension $3$,
  hence of codimension $3$, of $X_0$. Now $U$ is the complement of the
  orbit of this codimension $3$ submanifold, under the countable group
  $\mcg(S_o)$; the statement of the observation follows.
\end{proof}
Now we turn to the proof of Theorem \ref{SexMinimal}, restated as follows.
\begin{theorem}
  For all $[z]\in U$, the adherence $\overline{\mcg(S_o)\cdot[z]}$
  contains the isometry class of singular configurations.
\end{theorem}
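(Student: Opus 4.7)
My plan is to prove the theorem by a descent argument on the $\PSL$-invariant size function
\[
\Phi(z) = A(z) + B(z) = \sum_{i=1}^{6} a_{i,i+1}(z),
\]
which descends to a continuous proper function on $X_0$ vanishing exactly at the singular class (properness follows from the analogous property of $F$ used in Section~\ref{topsab1}, since $\log F \asymp \Phi/2$ for $\Phi$ large). The plan has two ingredients: (1) at every non-pinched, non-singular class, some element of $\mcg(S_o)$ strictly decreases $\Phi$; (2) a compactness argument then forces $\inf_{\gamma \in \mcg(S_o)} \Phi(\gamma\cdot[z]) = 0$, which places the singular class in the orbit closure.

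The descent step (1) would be established case by case using the trichotomy of Lemma~\ref{tricho}. The key observation is that the leapfrog $L_i$ preserves both the line $D_{i,i+1}$ and the translation $s_{x_{i+1}}s_{x_i}$, so it fixes $a_{i,i+1}$ while altering only the two adjacent distances via $a_{i-1,i} \mapsto d(x_{i-1},x_{i+1})$ and $a_{i+1,i+2} \mapsto d(s_{x_{i+1}}(x_i),x_{i+2})$. In case (TRI), where $D_{12}$, $D_{34}$, $D_{56}$ form a true hyperbolic triangle with vertices $y_1,\dots,y_6$, an appropriately chosen $L_i$ pivots a point $x_j$ across a vertex and, by elementary hyperbolic trigonometry, strictly shortens $\Phi$. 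Case (SKH), where one of $a_{12}, a_{34}, a_{56}$ dominates the sum of the other two, admits an analogous move exploiting the common perpendiculars $H_{23}, H_{45}, H_{61}$. Case (PAR) is a limiting configuration handled by pushing everything toward the common ideal point, which shrinks $B$ while preserving $A$; aligned configurations reduce to a one-dimensional descent. In positions where no single leapfrog suffices, one precomposes with the cyclic permutation $\sigma_5\cdots\sigma_1$ or the half-twist $(\sigma_1\sigma_2\sigma_1)^2$ to relabel the configuration and bring the geometry of Lemma~\ref{tricho} into convenient position.

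For step (2), set $\Phi_* = \inf_{\gamma \in \mcg(S_o)} \Phi(\gamma\cdot[z])$ and pick a minimizing sequence $[z_n] = [\gamma_n\cdot z]$. Properness of $\Phi$ yields a subsequential limit $[w_\infty] \in X_0$ with $\Phi([w_\infty]) = \Phi_*$. If $\Phi_* > 0$ and $[w_\infty]$ is non-pinched, the descent step applied at $[w_\infty]$, combined with continuity of the $\mcg(S_o)$-action on $X_0$, produces a fixed $\gamma$ with $\Phi(\gamma\cdot[z_n]) < \Phi_*$ for large $n$, contradicting the definition of $\Phi_*$. Hence either $\Phi_* = 0$, which immediately gives the theorem, or $[w_\infty]$ is pinched.

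The main obstacle is ruling out a non-singular pinched limit $[w_\infty]$: although the orbit of $[z]$ lies entirely in the non-pinched stratum (as $\mcg(S_o)$ preserves pinched-ness), its closure might meet the pinched locus. In the pinched stratum, $A$ vanishes identically and $B$ equals the perimeter of the triangle $(x_1,x_3,x_5)$, on which the mapping class group acts by the half-turn compositions $x_i \mapsto s_{x_j}(x_i)$ that arise from half-twists, together with cyclic permutations. A secondary case analysis on this triangle would be needed — reflecting through an obtuse vertex shrinks the perimeter immediately, while acute triangles require a compound move combining two such reflections with a cyclic shift to avoid the fixed-point trap — to show $\inf\Phi = 0$ on the pinched stratum as well. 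Continuity of $\Phi$ and of the elementary moves then transfers this descent back to non-pinched orbits approaching $[w_\infty]$, yielding $\Phi_* = 0$ and completing the proof.
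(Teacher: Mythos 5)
Your skeleton---a pointwise descent step for $\Phi=A+B$ plus a minimizing-sequence/compactness argument---is legitimate in principle, but it breaks at exactly the point you flag as ``the main obstacle''. The claim that a secondary case analysis gives $\inf\Phi=0$ on the pinched stratum is false: the pinched stratum is precisely where the discrete representations of $\Gamma_o$ inside $X_0$ live, and for a pinched configuration $(x_1,x_1,x_3,x_3,x_5,x_5)$ generating a discrete group, Margulis' lemma keeps the whole orbit a definite distance from the singular class, so the perimeter of the triangle $(x_1,x_3,x_5)$ has a strictly positive infimum over the orbit. Your proposed reduction moves on that triangle are exactly the Gilman--Maskit trace-reduction procedure, whose whole point is that it terminates at a positive minimum precisely in the discrete case. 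So there is nothing to ``transfer back by continuity'', and you are left needing to rule out that a minimizing sequence issued from a non-pinched orbit converges to such a pinched point---which your argument does not address and which is not obvious a priori. The paper avoids this difficulty entirely by making the descent quantitative: Lemmas~\ref{LemmeTRIPAR}, \ref{LemmeSKH1} and \ref{LemmeSKH2} give explicit decrements (a factor $\tfrac{24}{23}$ on $A$ in cases (TRI)/(PAR), explicit additive drops of $\cosh(a_{34})$ and of $B$ in case (SKH)), so the iteration provably reaches $A+B\le 2\varepsilon$ in finitely many steps and no limit point of the orbit ever has to be analysed.

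Second, the descent step itself is understated. The paper's estimates show that in case (SKH) the natural move decreases $A$ but can \emph{increase} $B$ (the bound is $B(z')\le B(z)+4A(z)-\min(1,b_{23},b_{45})$, and even that only for $A$ small); this is why the paper runs a two-stage scheme---first drive $A$ below $\varepsilon$, then attack $B$---rather than monotonically decreasing $A+B$. Asserting that some word in $\ModSo$ strictly decreases $A+B$ at every non-pinched, non-singular class is therefore not a consequence of ``elementary hyperbolic trigonometry'' applied to one leapfrog; it is essentially the whole content of the theorem and would require the same quantitative work. Finally, the partially degenerate configurations with $x_i=x_{i+1}$ for a single $i$ (four points on a common line) are not covered by the trichotomy of Lemma~\ref{tricho} at all; the paper treats them with a Euclid's-algorithm/irrationality argument that uses $[z]\in U$ in an essential way, and your sketch has no substitute for it.
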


\subsubsection{The operations}\label{ParAlg}
To prove this statement we show how to construct an effective
``geometric algorithm'' (we will comment later on this terminology)
which reduces the size, measured by the functions
$A$ and $B$ introduced above, of sextuple configurations.
Suppose $z=(x_1,\ldots,x_6)$ is a non-aligned sextuple satisfying
$x_i\neq x_{i+1}$ for $i=1,3,5$. Consider the following operations on $z$,
depending on
the trichotomy of Lemma~\ref{tricho}.

\begin{itemize}
\item[(Rot)]Make a cyclic permutation, to obtain a sextuple
  $z'$ with $A(z')=B(z)$ and $B(z')=A(z)$.
\item[(Tri)] If we are in case (TRI), apply (Rot) an even number of times,
  so that $a_{12}\geq a_{34}$ and $a_{12}\geq a_{56}$. Then perform the
  leapfrog moves $L_1$ to a power minimizing the distance $d(x_1,y_1)$;
  apply similarly the moves $L_3$ and $L_5$. Then, if this further
  decreases $B$, apply again $L_1$ or its inverse, once. Then apply (Rot).
\item[(Par)] If we are in case (PAR), apply leapfrog moves $L_3$ to
  push $x_3$ and $x_4$ towards the common point at $\partial\HH$ of
  $D_{12}$, $D_{34}$ and $D_{56}$, until the distances along horocycles,
  from $x_3$ or $x_4$, to $D_{12}$ and $D_{56}$, become less than
  $\frac{A(z)}{12}$.
  Then consider the points $y_1$, $y_2$, $y_5$ and $y_6$ as in Figure
  \ref{FiguresTRIPAR}, and apply powers of $L_1$ and $L_5$ to minimize
  the distances $d(x_1,y_1)$ and $d(x_5,y_5)$. Then apply (Rot).
\item[(Skh0)] If we are in case (SKH), first apply (Rot) an even number of
  times so that $a_{34}>a_{12}+a_{56}$, as in Figure \ref{FigureHexT}.
  Apply powers of $L_1$ and $L_5$ to minimize
  $d(x_1,y_1)$ and $d(x_5,y_5)$. Apply
  a power of $L_3$ to put $x_3$ in the segment $[y_3,m]$ or $x_4$
  in the segment $[m,y_4]$.
\item[(Skh1)] If in case (SKH), apply (Skh0), and then apply
  the ``half-twist move'' $(L_1L_2L_1)^2$.
\end{itemize}
The effect of these operations is precised in the following lemmas.
\begin{lemma}\label{LemmeTRIPAR}
  Suppose $z$ is in case (TRI) or (PAR), and let $z'$ be the sextuple
  resulting from applying the relevant operation, (Tri) or (Par), to $z$. Then
  $\frac{24}{23}A(z')\leq B(z')=A(z)$.
\end{lemma}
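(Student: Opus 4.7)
The equality $B(z')=A(z)$ is the easy half: each leapfrog move $L_i$ slides the pair $\{x_i,x_{i+1}\}$ along the line $D_{i,i+1}$, so it preserves the distance $a_{i,i+1}$. The operations (Tri) and (Par) apply only $L_1$, $L_3$ and $L_5$ before the final (Rot), so the configuration $z''$ immediately before (Rot) satisfies $A(z'')=A(z)$; since (Rot) is a cyclic shift of labels that swaps $A$ and $B$, we get $B(z')=A(z)$.

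For $A(z')=B(z'')\leq\frac{23}{24}A(z)$ in case (TRI), I use the triangle formed by $D_{12},D_{34},D_{56}$, whose vertices $y_1=y_6$, $y_2=y_3$, $y_4=y_5$ satisfy $d(y_{2i-1},y_{2i})=a_{2i-1,2i}$ by Section \ref{SubsecTroiSym}. The identity $s_{x_2}s_{x_1}=s_{y_2}s_{y_1}$ expresses two equal hyperbolic translations along $D_{12}$, forcing the orientations of $(x_1,x_2)$ and $(y_1,y_2)$ to agree; thus after iterated $L_1$ achieves $d(x_1,y_1)\leq a_{12}/2$, one automatically gets $d(x_2,y_2)=d(x_1,y_1)$, and analogously for the other two sides. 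The triangle inequality along $x_2\to y_2=y_3\to x_3$ gives $d(x_2,x_3)\leq d(x_2,y_2)+d(x_3,y_3)$; summing the three cyclic copies already yields $B(z'')\leq a_{12}+a_{34}+a_{56}=A(z)$. To improve to the factor $\frac{23}{24}$, I apply the strict hyperbolic law of cosines at each vertex, with angles controlled by the (Rot)-step which ensures $a_{12}$ is the longest side and hence $a_{12}\geq A(z)/3$. The optional extra $L_1$ or $L_1^{-1}$ move in (Tri) serves precisely to select the more favorable relative orientation of $x_{2i}$ and $x_{2i+1}$ at each triangle vertex (e.g.\ when the interior angle there is acute).

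For case (PAR), I pass to the upper half-plane model with the common parabolic fixed point at $\infty$, so $D_{12},D_{34},D_{56}$ become vertical lines. After $L_3$-moves push $x_3$ and $x_4$ upward until the horocyclic distances from $x_3$ to $D_{12}$ and to $D_{56}$ are at most $A(z)/12$, I take $y_3=x_3$, $y_4=x_4$ and choose $y_1,y_2,y_5,y_6$ as in Figure \ref{FiguresTRIPAR}; then $y_1$ and $y_6$ lie at a common altitude (the altitude of $x_3$ raised by $a_{12}$). Iterated $L_1$ and $L_5$ give $d(x_1,y_1)\leq a_{12}/2$ and $d(x_5,y_5)\leq a_{56}/2$, with matching pair distances by the orientation argument above. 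Then
\[
d(x_2,x_3)\leq \tfrac{a_{12}}{2}+\tfrac{A(z)}{12},\quad d(x_4,x_5)\leq \tfrac{A(z)}{12}+\tfrac{a_{56}}{2},\quad d(x_6,x_1)\leq \tfrac{a_{12}+a_{56}}{2}+\tfrac{A(z)}{6},
\]
the last using that the horocyclic distance between $y_1$ and $y_6$ at their common altitude is bounded by $2\cdot A(z)/12=A(z)/6$ (sum of the two horocyclic slacks at the lower altitude of $x_3$, divided by $e^{a_{12}}\leq 1$). Since case (PAR) is characterized by $a_{34}=a_{12}+a_{56}$, one has $a_{12}+a_{56}=A(z)/2$, and summing gives $B(z'')\leq A(z)/2+A(z)/3=5A(z)/6\leq\frac{23}{24}A(z)$.

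The principal obstacle is the quantitative factor $\frac{23}{24}$ in case (TRI): the triangle inequality alone only yields $B(z'')\leq A(z)$, so one must exploit the genuine hyperbolicity at each triangle vertex together with the extra leapfrog correction. Case (PAR), by contrast, is handled by a single direct calculation because the horocyclic bound $A(z)/12$ built into the algorithm already produces a comfortable gap.
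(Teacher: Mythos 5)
Your treatment of the equality $B(z')=A(z)$ and of the case (PAR) is correct and essentially identical to the paper's: the moves $L_1,L_3,L_5$ preserve $A$, (Rot) swaps $A$ and $B$, and in (PAR) the bounds $d(y_2,x_3')\leq A(z)/12$, $d(x_4',y_5)\leq A(z)/12$, $d(y_1,y_6)\leq A(z)/6$ combined with $a_{12}+a_{56}=A(z)/2$ give $B\leq\frac{5}{6}A(z)$ exactly as in the paper. Your observation that $d(x_1,y_1)=d(x_2,y_2)$ because $s_{x_2}s_{x_1}=s_{y_2}s_{y_1}$ forces matching translation directions is also correct and is implicitly used by the paper.

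The gap is in the case (TRI), which is precisely where the factor $\frac{23}{24}$ has to be earned. Your plan --- ``apply the strict hyperbolic law of cosines at each vertex, with angles controlled by the fact that $a_{12}$ is the longest side'' --- does not go through. The ordering of the side lengths gives no lower bound on the defect $\pi-\angle(x_{2i}',y_{2i},x_{2i+1}')$: the relevant angle at a vertex is either the interior angle $\theta$ of the triangle $y_1y_2,y_3y_4,y_5y_6$ or its supplement $\pi-\theta$, depending on which side of the vertex each of the two points lies, and when $\theta$ is small and the points sit on the unfavorable sides the law of cosines degenerates to the triangle inequality with no quantitative gain. Moreover the single extra move $L_1^{\pm1}$ allowed by (Tri) only translates the pair $(x_1,x_2)$, so it cannot ``select the favorable orientation at each triangle vertex'' as you claim; it can repair the orientation at one vertex only. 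The paper's actual mechanism is different and worth noting: after relabelling so that $a_{56}$ is the \emph{shortest} side, the CAT(0) midline inequality gives $d(m_{12},m_{34})=d(v_{12},u_{34})\leq a_{56}/2\leq A(z)/6$; one then argues that if $B\geq\frac{23}{24}A(z)$ the six points must each lie within a total of $\frac{A(z)}{24}$ of the endpoints of their half-segments, distinguishes whether $x_2'$ and $x_3'$ are near a ``matched'' pair of endpoints ($m_{12},m_{34}$ or $v_{12},u_{34}$) or a mismatched one, and in the mismatched case uses the one extra $L_1^{\pm1}$ to realign them at the vertex $y_2=y_3$ at the controlled cost of $\frac{A(z)}{24}$ at the vertex $y_1=y_6$, yielding $B\leq\frac{22}{24}A(z)$. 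Your proof as written is missing this entire argument, and the heuristic you substitute for it is not correct.
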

In the next lemmas, we write $b_{i,i+1}=d(y_i,y_{i+1})$ for $i=2,4,6$, in the
case (SKH), so that the right-angled skew hexagon has edge lengths
$a_{i,i+1}$ with $i=1,3,5$ and $b_{i,i+1}$ with $i=2,4,6$.
\begin{lemma}\label{LemmeSKH1}
  Suppose $z$ is in case (SKH), and let $z'$ be the sextuple resulting from
  applying the operation (Skh1) to $z$. Then $A(z')<A(z)$. More precisely,
  if $a_{34}>a_{12}+a_{56}$, then the operation (Skh1) leaves
  $a_{12}$ and $a_{56}$ invariant and decreases $\cosh(a_{34})$ by
  a quantity larger than
  $2\frac{\sinh^2(\min(a_{12},a_{56}))}{\cosh^2(a_{34})}$.
  Moreover, provided $A(z)$ is small enough, we also have
  $B(z')\leq B(z)+4A(z)-\min(1,b_{23},b_{45})$.
\end{lemma}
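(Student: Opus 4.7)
The half-twist $(L_1L_2L_1)^2$ sends $(x_1,\ldots,x_6)$ to $(f(x_1),f(x_2),f(x_3),x_4,x_5,x_6)$, where $f=s_{x_3}s_{x_2}s_{x_1}=s_{x_4}s_{x_5}s_{x_6}$; the second expression follows from the defining relation $s_{x_6}\cdots s_{x_1}=1$. As $f$ is an isometry and $x_4,x_5,x_6$ are untouched, we get $a_{12}'=a_{12}$, $a_{23}'=a_{23}$, $a_{45}'=a_{45}$, $a_{56}'=a_{56}$; only $a_{34}'=d(f(x_3),x_4)$ and $a_{61}'=d(x_6,f(x_1))$ can change. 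This immediately yields the asserted invariance of $a_{12}$ and $a_{56}$.

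For the strict decrease of $\cosh(a_{34})$, the plan is to exploit $f=s_{x_4}s_{x_5}s_{x_6}$ and $s_{x_4}(x_4)=x_4$ to rewrite $a_{34}'=d(T(x_3),x_4)$, where $T=s_{x_5}s_{x_6}$ is the translation along $D_{56}$ by $2a_{56}$. Working in Fermi coordinates $(t,h)$ around $D_{56}$, $T$ becomes a pure shift in $t$. After (Skh0), the points $x_3,x_4\in D_{34}$ have controlled positions (common shift of modulus at most $a_{34}/2$ from $y_3,y_4$), and their Fermi coordinates are given by the right-triangle identities $\tanh(t)=\tanh(s)/\cosh(b_{45})$ and $\cosh(h)=\cosh(b_{45})\cosh(s)/\cosh(t)$ along $D_{34}$. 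Substituting into the hyperbolic distance formula
\[\cosh(d(P_1,P_2))=\cosh(h_1)\cosh(h_2)\cosh(t_2-t_1)-\sinh(h_1)\sinh(h_2)\]
gives a closed form for $\cosh(a_{34})-\cosh(a_{34}')$, which in the skew regime $a_{34}>a_{12}+a_{56}$ is strictly positive. A careful expansion should produce a lower bound of order $\sinh(a_{12})\sinh(a_{56})/\cosh^2(a_{34})$, from which the stated estimate $2\sinh^2(\min(a_{12},a_{56}))/\cosh^2(a_{34})$ follows via the inequality $\sinh(a_{12})\sinh(a_{56})\ge\sinh^2(\min(a_{12},a_{56}))$.

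For the bound on $B(z')$, since $a_{23}'=a_{23}$ and $a_{45}'=a_{45}$, the change reduces to $B(z')-B(z)=d(x_6,f(x_1))-d(x_6,x_1)$. The crucial observation is that in the singular limit $A(z)\to 0$, one has $x_1=x_2$, $x_3=x_4$, $x_5=x_6$ collinear and $f\to s_{x_3}$, so $f(x_1)\to s_{x_3}(x_1)$ is the reflection of $x_1$ through $x_3$. An elementary one-dimensional computation on the limiting line yields
\[d(x_6,s_{x_3}(x_1))-d(x_6,x_1)=-2\min\bigl(d(x_1,x_3),d(x_3,x_6)\bigr),\]
and after (Skh0) the two distances on the right are $b_{23}+O(A(z))$ and $b_{45}+O(A(z))$ respectively. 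A first-order perturbation analysis, absorbing all error terms into the positive summand $4A(z)$ and truncating the negative contribution at $1$ to uniformize across configurations with large $b_{23}$ or $b_{45}$, yields the claimed inequality.

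The main obstacle lies in the third step: the perturbation argument must be made quantitative, bounding every deviation from the singular limit by a uniform multiple of $A(z)$ small enough that the constant $4$ suffices, while tracking the direction of the displacement $f(x_1)-x_1$ carefully enough to extract the cancellation producing the negative term $-\min(1,b_{23},b_{45})$. This requires a hyperbolic-trigonometric expansion of the position of $f(x_1)$ in Fermi coordinates around the near-collinear limit configuration, coupled with the skew right-angled hexagon identities of Buser.
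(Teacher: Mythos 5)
Your first step (only $a_{34}$ and $a_{61}$ can change, via $f=s_{x_3}s_{x_2}s_{x_1}=s_{x_4}s_{x_5}s_{x_6}$) is correct and matches the paper. But the second part of your argument stops exactly where the lemma's content begins: the bound $2\sinh^2(\min(a_{12},a_{56}))/\cosh^2(a_{34})$ is asserted (``a careful expansion should produce a lower bound of order\ldots''), not proved. Two things are missing. First, the sign: writing $a_{34}'=d(T(x_3),x_4)$ does not by itself show the distance decreases; this depends on where $x_3$ and $x_4$ sit relative to the folding locus, which is precisely what the normalization in (Skh0) (putting $x_3\in[y_3,m]$ or $x_4\in[m,y_4]$) guarantees, and you never use it. Second, the quantitative estimate: the paper writes the decrement of $\cosh(a_{34})$ exactly as $2\sin^2\theta_3\sinh u\sinh v$ (with $\theta_3$ the angle between $D_{34}$ and the reflection line $\ell_3$ at their intersection $p_3$, and $u=d(x_3',p_3)$, $v=d(p_3,x_4')$) and then proves three separate geometric inequalities: $v\geq a_{56}$, $\sin\theta_3\geq 1/\cosh(a_{34})$ (using that $\ell_3$ and $(y_4y_5)$ are disjoint, both being perpendicular to $D_{56}$, plus $d(p_3,y_4)\leq a_{34}$), and $u\geq\min(a_{12},a_{56})$ (a variation argument as $x_3$ slides along $[y_3,m]$). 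Each of these needs an argument; none appears in your Fermi-coordinate sketch.

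The third part rests on a false picture. In case (SKH) with $A(z)\to 0$, the configuration does \emph{not} become collinear: the right-angled hexagon degenerates to a geodesic triangle with side lengths approximately $b_{23}$, $b_{45}$, $b_{61}$, and the pinched points sit at its three (non-collinear) vertices. Moreover your one-dimensional identity $d(x_6,s_{x_3}(x_1))-d(x_6,x_1)=-2\min(d(x_1,x_3),d(x_3,x_6))$ is false even on a line: depending on the order of the three points the left-hand side can equal $+2d(x_1,x_3)$. Finally, the truncation at $1$ is not a bookkeeping device you can ``absorb'': in the paper it arises because the gain on $b_{61}$ is extracted from a CAT(0) comparison with an equilateral triangle of side $1$, valid only when $\pi-2\theta_2$ is small enough, and this is exactly why the decrement is $\min(1,b_{23},b_{45})$ rather than $2\min(b_{23},b_{45})$. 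As written, the third step is an acknowledged to-do list rather than a proof, and the heuristic it would be built on does not survive contact with the actual (SKH) geometry.
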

The operations described above do not suffice yet to prove
Theorem~\ref{SexMinimal}; the next lemma introduces an additional operation.
\begin{lemma}[Operation (Skh2)]\label{LemmeSKH2}
  Provided $A(z)$ and $\min_{i=2,4,6}(b_{i,i+1})$ are small enough,
  after applying (Skh0) the isometry
  $s_{x_3'}s_{x_2'}s_{x_1'}$ is a rotation of angle close
  (but not equal) to $\pi$.
  Then there exists $N\geq 0$ such that
  $(s_{x_3'}s_{x_2'}s_{x_1'})^N$ is a rotation of angle close to
  $\pm \frac{\pi}{2}$ and such that the move
  $(L_1L_2L_1)^{2N}$ results in decreasing $A$, and
  such that, if the resulting configuration $z'$ is still in case
  (SKH),
  then after applying again (Skh0) we have $B(z')\leq B(z)-1$.
\end{lemma}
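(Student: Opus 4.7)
The plan is to exploit the near-degeneration of the configuration. When $A(z)$ is small enough and we are in case (SKH), the $\Delta$-invariant of the triangle $x_1', x_2', x_3'$ obtained after (Skh0), which by Section~\ref{SubsecTroiSym} equals $\sinh(d(x_3', D_{12}))\sinh(a_{12})$, is small and in particular lies in $(0,1)$: the isometry $f = s_{x_3'}s_{x_2'}s_{x_1'}$ is therefore elliptic with rotation angle $\theta = \pi - \alpha$ for a small positive $\alpha$ (strictly positive because we are in (SKH), so $x_3' \notin D_{12}$). This gives the first assertion.

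Since $\alpha > 0$ is small, one can choose $N \geq 1$ of order $1/\alpha$ so that $N\theta \equiv \pm \pi/2 \pmod{2\pi}$ up to arbitrary precision, with the sign at our disposal. By the discussion of the half-twist move in Section~\ref{SubsecMarkings}, $(L_1L_2L_1)^2$ applies $f$ to the triple $(x_1', x_2', x_3')$ while preserving $f$ itself (which is conjugation-invariant under its own action), and hence $(L_1L_2L_1)^{2N}$ rotates that triple by $f^N$, a rotation of angle near $\pm \pi/2$ about some centre $p$. The remaining three points $x_4, x_5, x_6$ are untouched, so $a_{12}$ and $a_{56}$ are unchanged and only $a_{34} = d(f^N(x_3'), x_4)$ may vary. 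The decrease of $a_{34}$ would be established via the hyperbolic law of cosines in the triangles $p, x_3', x_4$ and $p, f^N(x_3'), x_4$: the placement of $x_3'$ within $[y_3, m]$ prescribed by (Skh0) fixes the angle $\angle x_3' p x_4$ in a range such that, for at least one of the two possible signs of the rotation, the angle $\angle f^N(x_3') p x_4$ brings $f^N(x_3')$ closer to $x_4$.

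For the conditional claim on $B$, if $z'$ is still in case (SKH), applying (Skh0) again yields new perpendicular lengths $b_{i,i+1}'$; because $f^N$ has significantly reshuffled $(x_1', x_2', x_3')$ by an angle of order $\pi/2$, at least one of the three $b_{i,i+1}'$ should differ from the corresponding $b_{i,i+1}$ by a uniformly positive amount, giving $B(z') \leq B(z) - 1$ provided $A(z)$ is small enough. The main obstacle is precisely this last uniform estimate: a quantitative lower bound on how much a $\pm \pi/2$ rotation of a triple in a small right-angled skew hexagon forces the rebuilt hexagon to differ from the original one. I would expect this to follow by a compactness argument in the space of normalized hexagons with $A(z)$ and $\min_i b_{i,i+1}$ in the prescribed range, combined with the observation that an exact equality would force an unrealistic alignment of the centre $p$ with the hexagon's structure.
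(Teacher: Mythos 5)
Your first two structural observations are right and match the paper: the trace of $s_{x_3'}s_{x_2'}s_{x_1'}$ is twice the $\Delta$-invariant, so a small $\Delta$-invariant gives an elliptic element of angle near (but not equal to) $\pi$, and iterating the half-twist $(L_1L_2L_1)^2$ does apply $f^N$ to the first three points. But there are genuine gaps in all three quantitative steps. First, $A(z)$ small alone does \emph{not} make the $\Delta$-invariant $\sinh(h)\sinh(a_{12})$ small: here $h=d(x_3',D_{12})$ is of order $b_{23}$ plus terms controlled by $A$, and $b_{23}$ is a priori unbounded. You must invoke the hypothesis that $\min_i(b_{i,i+1})$ is small (and, after relabelling, that the small one is $b_{23}$); you never use this hypothesis. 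Second, and more seriously, your law-of-cosines argument for the decrease of $a_{34}$ is not closed: the claim that ``for at least one of the two signs of the rotation $f^N(x_3')$ gets closer to $x_4$'' is false in general --- if the angle $\angle x_3'\,p\,x_4$ at the centre $p$ were small (say less than $\pi/4$), \emph{both} rotations by $\pm\pi/2$ would increase the distance. The missing geometric input, which is the heart of the paper's proof, is the \emph{location of the centre} $p$: because $A$ and $b_{23}$ are both small, $p$ lies very close to the segment $[x_3',x_4']$, so the angle $\angle x_3'\,p\,x_4'$ is close to $\pi$ and rotating to an angle near $\pm\pi/2$ cannot increase (and generically decreases) $\cosh(a_{34})$.

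Third, the conditional claim $B(z')\leq B(z)-1$ cannot be obtained by the compactness argument you sketch: the space of configurations with $A$ and $\min_i b_{i,i+1}$ in the prescribed range is \emph{not} compact, since $b_{61}$ (equivalently $d(y_2,y_6)$) may be arbitrarily large --- and that is precisely the regime in which the configuration stays in case (SKH), the only case where the claim has content. Moreover ``differs by a uniformly positive amount'' is not the statement you need; you need a decrease. The paper's mechanism is concrete: the operation pivots the line $D_{12}$ by an angle close to $\pm\tfrac{\pi}{2}$ about a point essentially lying on $D_{34}$, so the rotated $D_{12}$ sweeps towards $D_{56}$ and either crosses it (exiting case (SKH), so the conditional claim is vacuous) or, if $D_{56}$ was far away, the distance between the two lines drops by far more than $1$, whence $b_{61}$, and then $B$ after re-running (Skh0), drops by at least $1$. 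Without locating the rotation centre you have no handle on either the $A$-estimate or the $B$-estimate.
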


\subsubsection{Proof of Theorem \ref{SexMinimal}}
Let $z=(x_1,\ldots,x_6)\in\Sex$ such that $[z]\in U$, and let
$\varepsilon>0$, small enough to apply Lemmas~\ref{LemmeSKH1}
and~\ref{LemmeSKH2}. We want to prove that there exists
$z'\in B_6(S^2)\cdot z$ such that $A(z')+B(z') \leq 2\varepsilon$.

Let us suppose that the configuration is not aligned; we postpone
the aligned case to the end of the proof.
The case in which $x_i=x_{i+1}$ for some $i$ is quite straightforward
and we will deal with it later.
Thus, let us suppose
now that our configuration $z$, as well as all the configurations we
deal with in the following process, satisfy the condition
$x_i\neq x_{i+1}$ for all~$i$.

As a first step, let us apply the operations (Tri) or (Par) or (Skh1),
depending on whether $z$ is in case (TRI), (PAR) or (SKH) of Lemma~\ref{tricho},
and iterate this procedure, until $A\leq \varepsilon$.
This first process stops in finite time. Indeed, in cases (TRI) and (PAR),
$A$ drops by a factor of at least $\frac{24}{23}$. The worst thing that
could happen is to encounter only the case (SKH) after some iteration.
Suppose it is the case. By Lemma~\ref{LemmeSKH1}, at each iteration,
the three quantities $a_{12}$, $a_{34}$ and $a_{56}$ all decrease, hence
the biggest of them stays smaller than the starting quantity $A(z)$.
Hence, by Lemma~\ref{LemmeSKH1}, at each iteration the biggest
of $\cosh(a_{12})$, $\cosh(a_{34})$ and $\cosh(a_{56})$ decreases,
by an additive amount depending only on the smallest.
Hence, should the process not stop in finite time,
the smallest of $a_{12}$, $a_{34}$ and $a_{56}$ would have to converge to $0$.
But each iteration changes only the biggest of $a_{12}$, $a_{34}$ and $a_{56}$.
Hence, these three quantities converge to $0$, and this first stage of
iterations does stop in finite time.

Now we have $A(z')\leq\varepsilon$. Of course, under this condition,
if we encounter again the case (TRI) or (PAR) then we are done, by
Lemma~\ref{LemmeTRIPAR}. Suppose we do not.
Proceed as in the first step until we have
$\min(b_{23},b_{45},b_{61})\leq \varepsilon$ in case (SKH).
This will happen in finite time, by the second assertion of
Lemma~\ref{LemmeSKH1}. Note that the operation
(Skh0) leaves $A$ invariant, and after this operation we have
$B\leq b_{23}+b_{45}+b_{61}+2A$.
So as a last step, we proceed by iterating either the operation
(Skh1), or ((Skh2) followed by (Skh0)), depending on which decreases
$B$ the most. The conclusions of
Lemmas~\ref{LemmeSKH1} and~\ref{LemmeSKH2} now imply that $B$ converges
to $0$, hence is lower than $\varepsilon$ after finitely many
iterations.

Let us deal now
with the case when $x_i=x_{i+1}$ for some~$i$. Up to applying (Rot),
suppose that $x_5=x_6$. Now $s_{x_4}s_{x_4}s_{x_2}s_{x_1}=1$, so
$x_1$, $x_2$, $x_3$ and $x_4$ have to lie on a same line. Since
$[z]\in U$, these four points need to be pairwise distinct,
otherwise we could easily produce leapfrog moves leading to a
configuration where $x_i=x_{i+1}$ for $i=1,3,5$.
Denote by $\Delta$ be the line containing $x_1,\ldots,x_4$ and orient this
line. Up to applying powers of $L_3$ we may suppose that
$x_1$ is to the left of $x_2$, $x_3$ and $x_4$ on $\Delta$; this
implies that $x_3$ is at the right side of $x_1$, $x_2$, $x_4$.
Now put $\delta_1=d(x_1,x_2)$ and $\delta_2=d(x_1,x_4)$.
If $\delta_2>\delta_1$ apply the move $L_3$, this changes
$(\delta_1,\delta_2)$ into $(\delta_1,\delta_2-\delta_1)$. If
$\delta_1<\delta_2$ apply the move $L_2^{-1}$; this changes
$(\delta_1,\delta_2)$ into $(\delta_1-\delta_2,\delta_2)$.
We recognise Euclid's algorithm. It follows from the condition
$[z]\in U$ that $\delta_1$ and $\delta_2$ have an irrational ratio,
so this process pushes the points $x_1$, $x_2$, $x_3$ and $x_4$ close
together. By applying then powers of $L_1$ and $L_3$ we may
now push these four points as close as we want to the projection of
$x_5=x_6$ on $\Delta$. If $x_5=x_6\in\Delta$ then we are done.
Otherwise, according to the construction of Section~\ref{SubsecTroiSym},
the isometry $s_{x_6}s_{x_1}s_{x_2}$ is a rotation, of center
as close as we want from $x_5$, and of angle close (but distinct) to $\pi$,
hence it has an $N$-th power with angle close to $\frac{\pi}{2}$.
Then apply the iterated half-twist, $(L_6L_1L_6)^{2N}$.
This results in a configuration with $A$ as small as we want
(hence $A\leq\varepsilon$), furthermore in this new configuration the
lines $(x_1,x_2)$ and $(x_3,x_4)$ now cross each other, hence this
configuration is in case (TRI). Thus, it remains to apply the
operation (Tri) in order to get $A\leq\varepsilon$ and $B\leq\varepsilon$.

The case of aligned configurations, finally, can be treated by
mixing the strategy of case (PAR) and that of the degenerate case
above, depending on whether $x_i=x_{i+1}$ for some~$i$.
\qed

\subsubsection{Proofs of the lemmas}
\begin{proof}[Proof of Lemma \ref{LemmeTRIPAR}]
  The leapfrog moves $L_1$, $L_3$ and $L_5$ obviously do not
  change the distances $a_{12}$, $a_{34}$ and $a_{56}$ summing up to $A(z)$.
  We need to prove that the moves as in Operations (Tri) and (Par),
  except the last rotation, lead to a configuration $z'$ with
  $B(z')<\frac{23}{24}A(z)$.
  Here we use the notation $z'$ even though we may not
  have reached yet the configuration as in the statement of the lemma; the
  notation $z'$ is subject to change in the course of the proof, accordingly
  to the appropriate moves. We make this abuse of notation here, and in the
  two subsequent proofs.

  Suppose first that $z$ is in the case (TRI). After applying the leapfrog
  moves minimizing the distances $d(x_i',y_i)$, we have
  $x_i'\in[u_{i,i+1},m_{i,i+1}]$ for $i=1,3,5$ and
  $x_i'\in[m_{i-1,i},v_{i-1,i}]$ for $i=2,4,6$, where the points
  $m_{i,i+1}$, $u_{i,i+1}$ and $v_{i,i+1}$, $i=1,3,5$ are as in
  Figure~\ref{FiguresTRIPAR}.
  Thus,
  \begin{align*}
  d(x_1',y_1)\leq\frac{a_{12}}{2},\quad d(x_2',y_2)\leq\frac{a_{12}}{2}, \quad
  d(x_3',y_3)\leq\frac{a_{34}}{2}, \\
  d(x_4',y_4)\leq\frac{a_{34}}{2},\quad d(x_5',y_5)\leq\frac{a_{56}}{2}, \quad
  d(x_6',y_6)\leq\frac{a_{56}}{2}.
  \end{align*}
  As $y_1=y_6$, $y_2=y_3$ and $y_4=y_5$ this already gives $B(z')\leq A(z)$,
  by triangular inequalities.

  If $B(z')\geq\frac{23}{24}A(z)$, then the distances of each $x_i'$ to
  the closest end of its segment sum up to less than $\frac{A(z)}{24}$.
  Suppose (without loss of generality) that $a_{56}\leq a_{34}\leq a_{12}$, so that
  $a_{56}\leq\frac{A(z)}{3}$.
  The symmetry
  around $y_2$ and the CAT(0) inequality (and the
  intercept theorem) imply
  $d(v_{12},u_{34})=d(m_{12},m_{34})\leq\frac{a_{56}}{2}$.
  If $d(x_2',m_{12})+d(x_3',m_{34})\leq\frac{A(z)}{24}$, or
  $d(x_2',v_{12})+d(x_3',u_{34})\leq\frac{A(z)}{24}$, then by
  triangle inequalities we get $B(z')\leq \frac{21}{24}A(z)$, a
  contradiction. Thus $d(x_2',v_{12})+d(x_3',m_{34})\leq\frac{A(z)}{24}$ or
  $d(x_2',m_{12})+d(x_3',u_{34})\leq\frac{A(z)}{24}$. After an extra
  leapfrog move $L_1^{\pm 1}$ we have again
  $d(x_2',x_3')\leq\frac{a_{56}}{2}+\frac{A(z)}{24}$, and now we
  only have $d(x_1',y_1)\leq\frac{a_{12}}{2}+\frac{A(z)}{24}$. Triangle
  inequalities this time give $B(z')\leq \frac{22}{24}A(z)$; this
  settles the triangle case.

  The parabolic case is much simpler: the inequalities
  $d(y_2,x_3')\leq\frac{A(z)}{12}$, $d(x_4',y_5)\leq\frac{A(z)}{12}$,
  $d(y_1,y_6)\leq\frac{A(z)}{6}$, $d(x_i',y_i)\leq\frac{a_{12}}{2}$
  for $i=1,2$ and $d(x_i',y_i)\leq\frac{a_{56}}{2}$ for $i=5,6$
  directly yield, by triangle inequalities,
  $B(z')\leq\frac{5}{6}A(z)$.
\end{proof}

\begin{proof}[Proof of Lemma \ref{LemmeSKH1}]
  Let $z=(x_1,\ldots,x_5)\in\Sex$ be in the (SKH) configuration.
  Perform the moves according to the operation (Skh1), except the last
  ``half-twist''.
  Without loss of generality, suppose that now $x_3'\in[y_3,m]$.
  The moves we made so far do not change the value of $A$, and
  clearly, the half-twist then does not change the value of $a_{12}$ or
  $a_{56}$. In order to prove the first statement of Lemma~\ref{LemmeSKH1}
  we study the effect of this half-twist on $a_{34}$.

  This half-twist amounts to replace $x_1'$, $x_2'$ and $x_3'$
  by their image by the isometry $s_{x_3'}s_{x_2'}s_{x_1'}$.
  Recall from Section~\ref{SubsecTroiSym} that this isometry is the
  product $r_{\ell_2}r_{\ell_1}$ of the reflections by the lines $\ell_1$
  and $\ell_2$, as constructed in Figure~\ref{FigureHexTDeux}.
  \begin{figure}[htb]
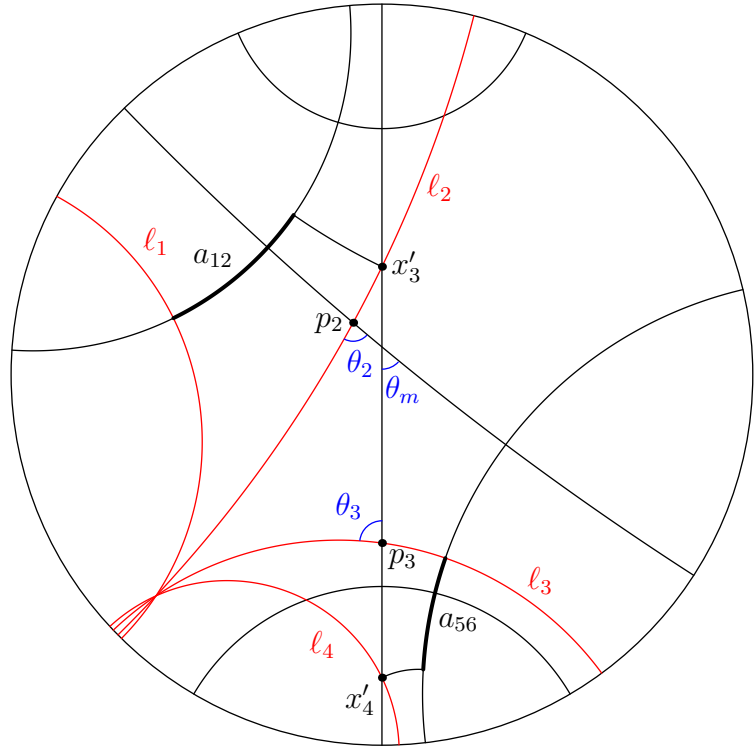

  \begin{center}
  \begin{asy}
  import math;
  import hyperbolic_geometry;
  import geometry;
  real taille = 280;
  size(taille,taille);

  hyperbolic_point y3=hyperbolic_point(1.6,90);
  hyperbolic_point y4=hyperbolic_point(1.3,-90);
  hyperbolic_line droite34 = hyperbolic_line(y3,y4);
  hyperbolic_line droite23 = hyperbolic_normal(droite34,y3);
  hyperbolic_line droite45 = hyperbolic_normal(droite34,y4);
  hyperbolic_point m=hyperbolic_point(0.15,90);
  hyperbolic_point const = hyperbolic_point(8,134);
  hyperbolic_line droite16 = hyperbolic_line(m,const);
  hyperbolic_line droite12 = common_perpendicular(droite23,droite16);
  hyperbolic_line droite56 = common_perpendicular(droite45,droite16);
  hyperbolic_point y1 = intersection(droite12,droite16);
  hyperbolic_point y2 = intersection(droite12,droite23);
  hyperbolic_point y5 = intersection(droite45,droite56);
  hyperbolic_point y6 = intersection(droite16,droite56);

  hyperbolic_point x3 = hyperbolic_point(0.6,90);
  hyperbolic_line segment1 = hyperbolic_normal(droite12,x3);
  hyperbolic_point cc2 = intersection(droite12,segment1);
  hyperbolic_line ell2 = hyperbolic_normal(segment1,x3);

  hyperbolic_point x4 = hyperbolic_point(2.3,-90);
  hyperbolic_line segment2 = hyperbolic_normal(droite56,x4);
  hyperbolic_point cc5 = intersection(droite56,segment2);
  hyperbolic_line ell4 = hyperbolic_normal(segment2,x4);

  hyperbolic_point COMM = intersection(ell2,ell4);
  point CommEucl = COMM.get_euclidean();
  hyperbolic_line ell1 = hyperbolic_normal(droite12,COMM);
  hyperbolic_line ell3 = hyperbolic_normal(droite56,COMM);

  hyperbolic_point p3 = intersection(ell3,droite34);
  hyperbolic_point p2 = intersection(ell2,droite16);

  hyperbolic_point ccc1 = intersection(ell1,droite12);
  hyperbolic_point ccc6 = intersection(ell3,droite56);

  draw(unitcircle);
  draw(droite34); draw(droite16); draw(droite23);
  draw(droite45); draw(droite12); draw(droite56);
  label("$\ell_1$",relpoint(ell1.to_path(),0.2),NE,red);
  label("$\ell_2$",relpoint(ell2.to_path(),0.25),E,red);
  label("$\ell_3$",relpoint(ell3.to_path(),0.82),NNE,red);
  label("$\ell_4$",relpoint(ell4.to_path(),0.65),SW,red);

  draw(ell2,red); draw(ell4,red); draw(ell1,red); draw(ell3,red);

  dot(p3); label("$p_3$",p3.get_euclidean(),SE);
  dot(p2); label("$p_2$",p2.get_euclidean(),W);

  dot(x3); label("$x_3'$",x3.get_euclidean(),E);
  dot(x4); label("$x_4'$",x4.get_euclidean(),SW);
  draw(hyperbolic_segment(cc2,x3)); draw(hyperbolic_segment(cc5,x4));

  draw(hyperbolic_segment(ccc1,cc2),black+1.5pt);
  draw(hyperbolic_segment(ccc6,cc5),black+1.5pt);

  label("$a_{12}$",relpoint(droite12.to_path(),0.55),NW);
  label("$a_{56}$",relpoint(droite56.to_path(),0.2),E);

  draw("$\theta_3$",arc(p3.get_euclidean(),0.06,90,173),blue);
  draw("$\theta_2$",arc(p2.get_euclidean(),0.05,-118,-43),blue);
  draw("$\theta_m$",arc(m.get_euclidean(),0.06,-89,-43),blue);
  \end{asy}
  \caption{The operation (Skh1)}
  \label{FigureHexTDeux}
  \end{center}
  \end{figure}
  Construct, similarly, the lines $\ell_3$ and $\ell_4$ such that
  $s_{x_6'}s_{x_5'}s_{x_4'}=r_{\ell_3}r_{\ell_4}$.
  As $z\in\Sex$, we have $r_{\ell_1}r_{\ell_2}=r_{\ell_3}r_{\ell_4}$.
  In particular these four lines either intersect in $\HH$, or in
  $\partial\HH$, or are all perpendicular to a common line, depending
  on whether $s_{x_3'}s_{x_2'}s_{x_1'}$ is elliptic, parabolic or
  hyperbolic: this is what we observe in Figure~\ref{FigureHexTDeux},
  but our reasoning will not depend on this trichotomy.

  We want to compare $a_{34}$ with the new distance,
  $d(r_{\ell_2}r_{\ell_1}\cdot x_3',x_4')$. This last distance is
  equal to $d(x_3',r_{\ell_3}r_{\ell_4}\cdot x_4')=d(x_3',r_{\ell_3}x_4')$.
  Let $p_3\in\HH$ and $\theta_3\in\R$ be as in Figure~\ref{FigureHexTDeux},
  the intersection point of $\ell_3$ and $D_{34}$ and their angle.
  Set $u=d(x_3',p_3)$ and $v=d(p_3,x_4')$.
  Then $\cosh(a_{34})=\cosh u\cosh v+\sinh u\sinh v$,
  whereas
  $\cosh(d(x_3',r_{\ell_3}x_4'))=\cosh u\cosh v+\cos(2\theta_3)\sinh u\sinh v$,
  hence the decrement in $\cosh(a_{34})$ is equal to
  $2\sin^2\theta_3\sinh u\sinh v$.
  We claim that $u\geq \min(a_{12},a_{56})$, $v\geq a_{56}$, and
  $\sin\theta_3\geq\frac{1}{\cosh(a_{34})}$; these claims imply the first part
  of Lemma \ref{LemmeSKH1}.

  The inequality $v\geq a_{56}$ is obvious.
  Also, we have $d(p_3,y_4)\leq a_{34}$. Thus, the lines
  $\ell_3$, $D_{34}$ and $(y_4,y_5)$ form the same picture as in
  Figure~\ref{FigureHexSimple}, where the angle and the distance involved
  are $\theta_3$ and $d(p_3,y_4)$. Indeed, $\ell_3$ and $(y_4,y_5)$ being
  both perpendicular to the same line $D_{56}$, do not cross each other.
  The claimed inequality follows.
  \begin{figure}[htb]
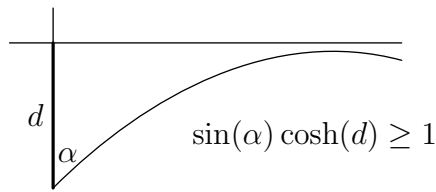

  \begin{center}
  \begin{asy}
  import geometry;
  unitsize(1.1pt);

  draw((-15,0)--(120,0)); draw((0,12)--(0,-50));
  draw((0,0)--(0,-50),black+1.2pt);
  draw((0,-50){dir(45)}..(120,-6){dir(-15)});
  label("$d$",(0,-25),W); label("$\alpha$",(0,-50),3*dir(68));
  label("$\sin(\alpha)\cosh(d)\geq 1$",(90,-32));
  \end{asy}
  \end{center}
  \caption{A classical inequality}
  \label{FigureHexSimple}
  \end{figure}
  The remaining inequality, $u=d(x_3,p_3)\geq\min(a_{12},a_{56})$, is
  obtained by a variation argument.
  If $x_3=y_3$ then $p_3=m$, so $u\geq a_{12}$.
  Starting there, as $x_3$ moves down, $x_4$ moves down at the same speed,
  and $v$ decreases until it reaches a minimum when $y_4$ is the middle
  of the segment $[p_3,x_4]$. It then increases in a symmetric way as $x_3$
  moves further down, until it reaches its initial value as $p_3=y_4$.
  Along the way we always have $u\geq a_{12}$. If, in this process, $x_3'$
  has already reached $m$, then we are done. Otherwise,
  after $p_3=y_4$
  we have $d(x_3,p_3)\geq d(m,y_4)\geq a_{56}$.
  This proves the first part of Lemma~\ref{LemmeSKH1}.

  Before applying (Skh0) we had $B(z)\geq b_{23}+b_{45}+b_{61}$.
  Indeed, $y_2$ and $y_3$ realize the smallest distance between a point
  in $D_{12}$ and a point in $D_{34}$, and so on. Let us write
  $(x_1,\ldots,x_6)$ the points after applying (Skh0), and
  $(x_1',\ldots,x_6')$ the points after applying $(L_1L_2L_1)^2$.
  Concerning $B$, this last move does not affect $d(x_2,x_3)$
  or $d(x_4,x_5)$; it changes only $d(x_6,x_1)$, by applying
  $r_{\ell_2}r_{\ell_1}$ to $x_1$. We have $d(x_6,y_6)\leq\frac{a_{56}}{2}$,
  and $d(r_{\ell_1}x_1,y_2)\leq 4a_{12}$. So
  $d(x_6',x_1')\leq\frac{a_{56}}{2}+4a_{12}+d(r_{\ell_2}y_2,y_6)$.
  Also, $d(x_2',x_3')\leq b_{23}+\frac{a_{12}}{2}+a_{34}$ and
  $d(x_4',x_5')\leq b_{45}+a_{34}+\frac{a_{56}}{2}$, so
  $B(z')\leq d(r_{\ell_2}y_2,y_6)+4A(z)+b_{23}+b_{45}$.
  Now, we obviously have $d(y_2,p_2)\geq b_{23}$, $d(p_2,y_6)\geq b_{45}$
  and $\frac{\pi}{2}\geq\theta_2\geq\theta_m$, with the notation of
  Figure~\ref{FigureHexTDeux}, and $\theta_m$ can be forced to be as close
  to $\frac{\pi}{2}$ as we want provided $A(z)$ is small; this is made
  precise by Figure~\ref{FigureHexSimple}.
  For $i=2,6$ let $q_i\in[p_2,y_i]$ the point at distance
  $\delta=\min(1,b_{23},b_{45})$ of $p_2$. Provided $\pi-2\theta_2$
  is smaller than the angle of an equilateral triangle of side $1$ in
  $\HH$, we have (eg, by CAT(0) inequality) $d(r_{\ell_2}q_2,q_6)\leq \delta$,
  hence $d(r_{\ell_2}y_2,y_6)\leq b_{61}-\delta$ by triangle inequalities;
  thus finally $B(z')\leq b_{23}+b_{45}+b_{61}+4A(z)-\min(1,b_{23},b_{45})$.
\end{proof}

\begin{proof}[Proof of Lemma \ref{LemmeSKH2}]
  If $A(z)$ is small, and, say, $b_{23}$ is small, then after doing
  (Skh0), as of the construction of Section~\ref{SubsecTroiSym},
  $s_{x_3}s_{x_2}s_{x_1}$ is a rotation of angle close to $\pi$,
  with center very close to the segment $[x_3,x_4]$. For some
  $N$, its $N$th power has angle close to $\frac{\pi}{2}$ and
  $d((x_{x_3}s_{x_2}s_{x_1})x_3,x_4)\leq d(x_3,x_4)$.
  Thus the operation (Skh2) does not increase $A$. Also, the lines
  $(x_1',x_2')$ and $(x_5',x_6')$ now cross each other
  (hence we end in case (TRI)), unless the distance
  $d(y_2,y_6)$ was very big. In that case, the distance between
  $D_{12}$ and $D_{56}$ decreases significantly by the operation
  (Skh2), and we see easily that $B(z')\leq B(z)-1$ after doing
  (Skh0) once again; this inequality is actually extremely far from
  sharp.
\end{proof}

\subsection{Side remarks}

We end this section with some remarks on which we chose not to
expand too much the exposition in this article.

First, as we said in Paragraph~\ref{ParAlg}, Theorem~\ref{SexMinimal}
is proved by iterating an explicit geometric procedure, by compass and
straightedge construction. We cannot properly speak of an
algorithm only because the data of six points in the plane is not a
finite information in terms of a finite alphabet:
it seems preferable to speak of a ``geometric algorithm'', or
``real number algorithm''.
In \cite{GilmanMaskit} and \cite{Gilman}, J.~Gilman and B.~Maskit gave
such an algorithm to decide whether a given
non-elementary
representation of the free
group of rank $2$ in $\PSL$ is discrete. Using this,
the ``algorithm'' given in the paragraph above decides whether a
non-elementary
representation of the group $\Gamma_o$ is discrete. Indeed, it follows
from Margulis' lemma, and Theorem~\ref{SexMinimal}, that a
representation in $\Hom'(\Gamma_o,\PSL)$
can be discrete only if it is pinched,
in which case we can run the Gilman-Maskit algorithm on the two generators
$s_{x_3}s_{x_1}$ and $s_{x_5}s_{x_1}$, which generate an index-two
(or index-one, accidentally)
subgroup of the image of our representation
(the case of non-elementary representations of $\Gamma_o$
which kill no $c_i$ is easier: all of these representations
are discrete).
In this regard, we can certainly replace $U$ by the set of
non-discrete, non-elementary representations in the statement of
Theorem~\ref{SexMinimal}. The following statement, more general and in the
spirit of \cite{PreviteXia} seems reasonable:
\begin{conj}
  For every non-elementary, non-discrete representation $\rho\colon\Gamma_o\rightarrow\PSL$,
  the orbit $B_6(S^2)\cdot[\rho]$ is dense in $X_0$.
\end{conj}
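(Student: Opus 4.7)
The plan is to combine Theorem \ref{SexMinimal} (accumulation of the $B_6(S^2)$-orbit at the singular class $[s]$) with the local dynamical analysis near $[s]$ carried out in Sections \ref{neighbourhood} and \ref{SectionDyn}, exploiting non-discreteness to upgrade accumulation at one point to density everywhere.

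First, I would reduce to the non-pinched case. A non-elementary representation cannot persistently sit inside the codimension-three pinched locus: either $[\rho]$ is already non-pinched, or a single leapfrog move moves it off this locus, using that pinched configurations impose rigid collinearity conditions incompatible with a non-elementary image. Theorem \ref{SexMinimal} then provides a sequence $\varphi_n\in B_6(S^2)$ such that $[\rho_n]:=[\rho\circ\varphi_n^{-1}]$ converges to $[s]$.

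Next, I would show that the orbit closure $K:=\overline{B_6(S^2)\cdot[\rho]}$ contains an open set. Here is where non-discreteness enters: because $\rho$ is non-elementary and non-discrete, its image has closure all of $\PSL$; in particular the image meets every neighborhood of every elliptic element with $\pi$-irrational angle. Given a separating simple closed curve $c$ on $S_o$ whose lift to $\Sigma$ is invariant under the hyperelliptic involution (as in the hourglass picture of Proposition \ref{Prop12Mod2}), one can extract a subsequence of $[\rho_n]\to[s]$ for which $\rho_n(c)$ is elliptic of angle $\theta_n$ with $\theta_n/2\pi$ irrational; the $T_c$-iterates $T_c^k\cdot[\rho_n]$ are then equidistributed on the continuous twist-flow circle $\{\Phi^c_t\cdot[\rho_n]:t\in\R\}$, which therefore lies in $K$. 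Repeating this for three independent separating curves and invoking the non-integrability of the resulting twist-flow distribution at $[s]$ established in Section \ref{SectionDyn}, a Chow–Rashevsky type argument fills an open neighborhood inside $K$.

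Finally, once $K$ has nonempty interior, the topological transitivity of the $B_6(S^2)$-action on $X_0$ (a consequence of ergodicity, Theorem \ref{SexErgodique}) forces $K=X_0$: a dense orbit must enter every nonempty open set, hence meets both $K$ and its complement unless the complement is empty.

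The main obstacle, and the technical heart of such a proof, is the simultaneous selection step: along the approximating sequence $[\rho_n]\to[s]$, one must produce three simple closed curves on $S_o$ whose $\rho_n$-images are simultaneously elliptic with $\pi$-irrational rotation angles, and whose twist directions at $[\rho_n]$ form a basis of the non-integrable distribution identified near $[s]$. This combines a diophantine condition (irrationality of angles) with two topological conditions (simplicity of curves and genericity of twist directions). For non-separating curves in Euler class $\pm 1$ or in $X_0^-$, an analogous argument was carried out in \cite{Mod2}; the separating/hourglass case is more delicate because one loses the flexibility provided by non-separating curves, and must instead lean on the completely non-integrable nature of the separating-twist distribution furnished by the cotangent-Lagrangian model of Section \ref{neighbourhood}.
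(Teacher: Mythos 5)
The statement you are proving is not a theorem of the paper: it is explicitly left as an open conjecture (introduced by ``the following statement \ldots{} seems reasonable'', in the spirit of \cite{PreviteXia}), so there is no proof in the paper to compare yours against. Your overall scheme (accumulate at the singular class by Theorem~\ref{SexMinimal}, fill an open set in the orbit closure using twist flows near $[\rho_0]$, then conclude by ergodicity) is the natural line of attack, and your last step is sound: the orbit closure $K$ is closed and invariant, its interior is invariant, and ergodicity plus full support of the Lebesgue class forces $K=X_0$ once $\operatorname{int}(K)\neq\emptyset$. But the middle of the argument has genuine gaps. First, a small one: the pinched locus is by definition $\operatorname{Mod}(S_o)$-invariant (pinched configurations are defined as an orbit), so ``a single leapfrog move moves it off this locus'' is false; the correct route here is the paper's remark that Theorem~\ref{SexMinimal} extends to all non-discrete non-elementary representations via the Euclid-algorithm step.

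The serious gap is the passage from accumulation at $[\rho_0]$ to an open subset of $K$. You ``extract a subsequence for which $\rho_n(c)$ is elliptic of $\pi$-irrational angle'', but the orbit is a specific countable set and nothing in non-discreteness of the global image controls the rotation number of one fixed curve class at these particular points; near $[\rho_0]$ the angles tend to $\pi$, which is exactly rational, and rationality collapses the $T_c$-orbit to finitely many points of the twist circle. Worse, the Chow--Rashevsky step requires iterating: at each intermediate point of a composed twist trajectory one again needs irrationality of the angle of the \emph{next} curve, and there are real degeneracies (for disjoint partition curves $\gamma,\gamma'$ the function $\Theta_{\gamma'}$ is constant along the $\gamma$-twist circle, so one can get stuck on a rational level set). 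The paper's ergodicity proof evades all of this with Fubini-type ``almost every $(z,t_1,\dots,t_n)$'' arguments, which give an almost-everywhere conclusion but say nothing about one prescribed orbit. Converting that measure-theoretic transitivity into topological density of \emph{every} non-discrete non-elementary orbit is precisely the content of the conjecture, and your proposal does not supply the missing mechanism for it.
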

If we replace $\PSL$ by
$\PSLC$, we have a natural identification between the
space of non-elementary morphisms of $\Gamma_o$ in $\PSLC$ which
kill no $c_i$, and all the non-elementary morphisms of $\Gamma$ in
$\PSLC$ of Stiefel-Whitney class $0$; this follows
for instance from the arguments of \cite{Mod2}, Section 3.
These representations of $\Gamma_o$ send each generator $c_i$ to
a rotation of angle $\pi$ around some line in $\HHtrois$.
The representations treated in this article correspond to six lines
orthogonal to a common plane. The representations in $X_0^-(\Gamma)$
correspond to configurations of six lines in a plane; the remaining
real characters of representations in $SO(3)$ correspond to six lines
through a common point.
The quickest proof of the connectedness of $X_0^-(\Gamma)$, to our mind,
is through this correspondence, by writing, in that setting, the analog
of Lemma~\ref{tricho}; this is all elementary and we leave the details
to the reader. We can extend the methods of Theorem~\ref{SexMinimal}
to these representations in $X_0^-(\Gamma)$, leading to a much nicer
proof of Theorem~1.4 of \cite{Mod2} in the case of Euler class $0$; we
chose not to elaborate on this point in this article.
It seems more interesting, but also quite
challenging, to find
a subset of representations in $\PSLC$, of positive measure,
on which Theorem~\ref{SexMinimal} could extend.


\section{Neighbourhood of the singular representation}\label{neighbourhood}

We saw in Theorem \ref{SexMinimal} that the orbit of
almost every representation in $X_0(\Gamma_o)$ accumulates
to the singular representation. The classification of ergodic
components of the character varieties therefore reduces to
a careful study of a neighbourhood of the singular
representation in $X_0(\Gamma_o)$. This neighbourhood
turns out to have a very rich structure; we devote this section 
to studying it. This will provide
all the material needed
to prove the ergodicity statements in Section~\ref{SectionDyn}.

\subsection{A Euclidean model}\label{euclidean}
Suppose that $z^n=(x_1^n,\ldots,x_6^n)$ is a sequence of
sextuples converging to the singular configuration. Then, up
to extraction and renormalization by a scalar, one can suppose
that the family $z^n$ converges in Gromov-Hausdorff sense to
a configuration $(p_1,\ldots,p_6)$ in the Euclidean plane
$\mathcal{E}$. Consider the set of all limiting configurations up
to affine isometry respecting the orientation: the condition
$s_6^n\cdots s_1^n=1$ implies the same condition for the
Euclidean $\pi$-rotations over the $p_i's$. This is equivalent to
the condition $\sum (-1)^i p_i=0$.
Almost every
Euclidean configuration is triangular,
and
these limits of hyperbolic configurations also have to satisfy an
extra condition reminiscent from Remark \ref{RmqTriangle}. We
leave it to the reader as a pleasant exercise in plane Euclidean
geometry that this condition is equivalent to the equality between
signed areas as appearing in the following definition:
\[X_{\mathcal E}=\{(p_1,\ldots,p_6)\in \mathcal{E}^6/ \sum (-1)^i
p_i=0, \operatorname{Area}(p_1,p_2,p_3)+ 
\operatorname{Area}(p_4,p_5,p_6)=0\}/\rm{Isom}^+(\mathcal{E}).\]
This space is the quotient by a circle action of a quadratic cone
inside some vector space $V$.
More precisely,
consider the space
$V=\{v=(z_1,\ldots,z_6)\in \C^6/\sum (-1)^i z_i=0\}/\C$
where $\C$ acts on $\C^6$ by diagonal translation,
and define on $V$ a Hermitian form $h$ as follows:
\begin{eqnarray*}
h(v,v)
&=&\operatorname{Area}(z_1,z_2,z_3)+\operatorname{Area}(z_4,z_5,z_6)\\
&=&\frac{1}{2}\det(z_2-z_1,z_3-z_1)+\frac{1}{2}\det(z_5-z_4,z_6-z_4)\\
&=&\frac{1}{2}\sum_{1\le i<j\le 6}(-1)^{i+j+1}\det(z_i,z_j)=
\frac{1}{2}\sum_{1\le i<j\le 6}(-1)^{i+j}\im(z_i\overline{z_j})\\
&=&\frac{1}{4i}\sum_{i<j} (-1)^{i+j}
(z_i\overline{z_j}-\overline{z_i}z_j).
\end{eqnarray*}
Also, put $q(v)=h(v,v)$ the underlying real quadratic form
and set $C=q^{-1}(0)$. The subset of non-aligned sextuples
in $C$ will be denoted by $C^\times$. With this notation,
$X_{\mathcal E}=C/S^1$ where $S^1$ acts diagonally
on $V$.
A simple computation shows that $h$ has signature $(2,2)$ on $V$;
in particular it is non-degenerate and its imaginary part gives a
symplectic form
on the real vector space
$V$, such that $q$ is a moment map for the diagonal action of $S^1$.
This implies that $X_{\mathcal E}$ has a natural symplectic form,
being a symplectic quotient; see \cite{McDuffSalamon}, Section 5.1 for
a reminder.

This symplectic structure has the property that the Hamiltonian flow of the length function
$d(z_1,z_2)$ (for instance) is the transformation fixing
$z_3,z_4,z_5,z_6$ and translating $z_1$ and $z_2$ along the
line joining them.  Moreover, the Hamiltonian flow of the function
$\operatorname{Area}(z_1,z_2,z_3)$ is given by
\[\Psi^t_{123}(z_1,\ldots,z_6)=(R_tz_1,R_tz_2,R_tz_3,z_4,z_5,z_6)\]
where $R_t$ is the rotation around the point $z_1-z_2+z_3$
(see Section \ref{SubsecTroiSym}).

\subsection{The Zariski tangent space}\label{zariski}

From now on in this section,
it will be more convenient to replace $\PSL$ by its isomorphic group
$ \PU(1,1)=
\left\lbrace\pm
\begin{pmatrix} a & b \\ \overline{b} & \overline{a}\end{pmatrix},
a,b\in\C, |a|^2-|b|^2=1\right\rbrace$
acting by homographies on the unit disc.
At a representation $\rho$, the Zariski tangent space to
$\Hom(\Gamma_o,\PU(1,1))$ may be described as the space of paths
$\rho_t\colon\gamma\mapsto\exp(tu(\gamma))\rho(\gamma)$
which, at first order, keep being representations of $\Gamma_o$.
This condition amounts to the relation
$u(\gamma_1\gamma_2)=u(\gamma_1)+\ad_\rho(\gamma_1)\cdot u(\gamma_2)$
for all $\gamma_1,\gamma_2$ where we have set $\ad_\rho(\gamma)\cdot \xi=\rho(\gamma)\xi\rho(\gamma)^{-1}$; the set of such maps $u$ is the
space $Z^1(\Gamma_o,\ad_\rho)$ of cocycles in group cohomology with
coefficients in $\pu(1,1)$ twisted by the adjoint action of $\rho$.
Coboundaries, of the form $\gamma\mapsto u_0-\ad_\rho(\gamma)u_0$,
correspond to (actual) deformations of $\rho$ by conjugation,
and the Zariski tangent space to $X(\Gamma_o)$ at a class $[\rho]$
is expected to be isomorphic to $H^1(\Gamma_o,\ad_\rho)$, see eg \cite{Weil64,Goldman84}.
This isomorphism holds at non-elementary representations, by
Proposition 5.2 of \cite{HeusenerPorti}.
In particular, if we denote by the same letter $\rho$ a
representation of $\Gamma_o$ and the corresponding representation of $\Gamma$,
then Proposition \ref{correspondence} implies that
$H^1(\Gamma_o,\ad_\rho)\simeq H^1(\Gamma,\ad_\rho)$ at every
non-elementary representation.

At singular representations it may happen however that the Zariski tangent
is not isomorphic to this cohomology group.
We do not investigate this question here, as we are not concerned with the
algebraic structure of $X(\Gamma_o)$.

At the singular representation, this cohomology group has a simpler description
as we explain here.
Define $\rho_0\colon\Gamma_o\rightarrow\PU(1,1)$ by
$\rho(c_i)=s_0=\pm\begin{pmatrix} i & 0\\ 0 & -i\end{pmatrix}$,
the half-turn around $0$, for $i=1,\ldots,6$.
In the natural decomposition
$\pu(1,1)=
\left\lbrace
\begin{pmatrix} ix & z \\ \overline{z} & -ix\end{pmatrix},
x\in\R,z\in\C\right\rbrace\simeq\R\oplus\C$, the element $\ad(s_0)$
acts trivialy on $\R$ and by multiplication by $-1$ on $\C$, so the
cocycle condition implies (with $\gamma_1=\gamma_2=c_i$ for all $i$)
that every cocycle has a trivial $\R$-part, and (with six terms) that
$\sum_i(-1)^i u(c_i)=0$, whereas a coboundary sends each $c_i$ to the same
complex number $2u_0$; this gives the natural identification
$V\simeq H^1(\Gamma_o,\ad_{\rho_0})\simeq H^1(\Gamma_o,\epsilon)$
where by $\epsilon$ we mean $\C$-coefficients twisted by the action
$\epsilon(c_i)z=-z$ for $i=1,\ldots,6$.
This is consistent in idea with the beginning of Paragraph \ref{euclidean}.
Indeed, the matrix
$\exp \begin{pmatrix} 0 & z\\ \overline{z} & 0 \end{pmatrix} s_0 =
\pm\begin{pmatrix} i\cosh(\rho) & -i\sinh(\rho)e^{i\theta}\\
i\sinh(\rho)e^{-i\theta}& -i\cosh(\rho) \end{pmatrix}$,
where $z=\rho e^{i\theta}$,
acts on the disc by a
half-turn around the point
$\tanh(\frac \rho 2)e^{i\theta}$.
Thus, if $u$ is an element of $H^1(\Gamma_o,\epsilon)$, the associated deformation
$\rho_t$ maps, at first order, $c_i$ to the half-turn around
$\frac{t}{2}u(c_i)$.

Now the inclusion map $\Gamma\rightarrow\Gamma_o$ yields a map
$Z^1(\Gamma_o,\epsilon)\rightarrow Z^1(\Gamma,\C)$ where the
$\C$-coefficients are not twisted any more since $\Gamma=\ker(\epsilon)$.
By checking on a basis, we will prove that this map
induces an isomorphism as follows.

\begin{proposition}
There is a natural isomorphism between $V$ and the cohomology
space $H^1(\Sigma,\C)$ such that
the Hermitian form $h$ corresponds
to the form $\frac{1}{4i} v\cdot \overline{w}$
where $\cdot$ denotes the cup product evaluated at the fundamental class. 
\end{proposition}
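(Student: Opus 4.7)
The plan is to construct the map $\Phi\colon V\simeq H^1(\Gamma_o,\epsilon)\to H^1(\Sigma,\C)$ by restricting cocycles along the index-two inclusion $\Gamma=\ker\epsilon\subset\Gamma_o$, to prove it is a linear isomorphism by an explicit dimension count on the standard basis, and finally to identify the two Hermitian forms by direct substitution. Since the proposition explicitly suggests ``checking on a basis'', I would bypass inflation-restriction arguments and keep everything explicit.

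I would start by fixing a cocycle $u\in Z^1(\Gamma_o,\epsilon)$ with $u(c_i)=z_i$, and note that, as $\epsilon$ is trivial on $\Gamma$, the restriction $\tilde u=u|_\Gamma$ is an honest morphism $\Gamma\to\C$. Any coboundary $c_i\mapsto 2u_0$ shifts every $z_i$ by the same constant and hence vanishes upon passing to the differences that describe $\tilde u$; thus $\Phi$ descends to cohomology. Using the twisted cocycle identity $u(\gamma_1\gamma_2)=u(\gamma_1)+\epsilon(\gamma_1)u(\gamma_2)$ together with the markings of Equation~(\ref{PiStar}), I obtain at once
\[
\tilde u(a_1)=z_1-z_2,\ \ \tilde u(b_1)=z_3-z_2,\ \ \tilde u(a_2)=z_4-z_5,\ \ \tilde u(b_2)=z_6-z_5.
\]
Both $V$ and $H^1(\Sigma,\C)\simeq\Hom(\Gamma^{\rm ab},\C)$ have complex dimension four, so to prove $\Phi$ is an isomorphism I would simply exhibit a preimage of an arbitrary $(w_1,w_2,w_3,w_4)\in\C^4$: the choice $z_2=0$, $z_1=w_1$, $z_3=w_2$, $z_5=w_1+w_2-w_3-w_4$, $z_4=z_5+w_3$, $z_6=z_5+w_4$ works, and a single substitution confirms both the constraint $\sum(-1)^iz_i=0$ and the prescribed values of $\tilde u$.

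For the forms, recall that in the dual symplectic basis $a_1^*,b_1^*,a_2^*,b_2^*$ the cup product evaluated on the fundamental class reads $v\cup v'\,[\Sigma]=w_1w'_2-w_2w'_1+w_3w'_4-w_4w'_3$, so
\[
\tfrac{1}{4i}v\cup\bar v=\tfrac{1}{2}\bigl(\im(w_1\bar w_2)+\im(w_3\bar w_4)\bigr).
\]
Substituting $w_1=z_1-z_2$, $w_2=z_3-z_2$ and expanding yields $\im(w_1\bar w_2)=\det(z_2-z_1,z_3-z_1)=2\operatorname{Area}(z_1,z_2,z_3)$, and similarly for the second summand; summing the two recovers exactly $h(v,v)=\operatorname{Area}(z_1,z_2,z_3)+\operatorname{Area}(z_4,z_5,z_6)$, as claimed.

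The main obstacle I anticipate is the careful bookkeeping of signs: the cocycle identity reflects the right-to-left convention for $\PSL$ declared in Section~\ref{SubsecMarkings}, the cup product formula depends on the orientation of $\Sigma$ (hence on the precise form of the intersection pairing read off from $[a_1,b_1][a_2,b_2]=1$), and the identity $\im(u\bar v)=-\det(u,v)$ versus $+\det(u,v)$ must be used consistently with the chosen convention for $\operatorname{Area}$. Once these conventions are coherently fixed, the two computations above assemble to give the proposition.
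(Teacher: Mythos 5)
Your proof is correct, and it computes the same map as the paper --- the restriction of twisted cocycles along $\Gamma=\ker\epsilon\subset\Gamma_o$ --- but verifies everything by a genuinely different computation. The paper works with the Poincar\'e duals $\gamma_i^\sharp$ of the six arcs joining consecutive marked points (Figure~\ref{FigureNoms}): it reads off the intersection numbers $\gamma_i^\sharp\cdot\gamma_{i\pm1}^\sharp=\mp1$, uses the fact that $\gamma_1+\gamma_3+\gamma_5$ and $\gamma_2+\gamma_4+\gamma_6$ bound subsurfaces to show that the two maps $f_1,f_2$ out of $\C^6$ (to $H^1(\Sigma,\C)$ and to $V$ respectively) have the same kernel, and lands on the ``circular'' expression $-\tfrac12\sum_i\im(\lambda_i\overline{\lambda_{i+1}})$ for both forms. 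You instead evaluate the restriction directly on the symplectic generators via Equation~(\ref{PiStar}), obtaining $\tilde u(a_1)=z_1-z_2$, $\tilde u(b_1)=z_3-z_2$, etc., prove bijectivity by exhibiting a preimage plus a dimension count, and match the forms through the standard genus-two cup-product formula in the basis $a_1^*,b_1^*,a_2^*,b_2^*$; the identity $\det(z_2-z_1,z_3-z_2)=\det(z_2-z_1,z_3-z_1)$ that you use implicitly is correct, and your numerology ($\tfrac12\cdot 2\operatorname{Area}$) checks out. Your route is more self-contained (no auxiliary curves or figure needed, and the coboundary/constraint bookkeeping is transparent), at the price of hiding the cyclic symmetry of $h$ that the paper's basis makes visible. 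The one point you must still pin down --- and which you correctly flag --- is the orientation convention fixing the sign of $a_i\cdot b_i$, hence the overall sign in $\tfrac{1}{4i}v\cdot\overline{w}$; the paper faces exactly the same issue when it reads $\gamma_i^\sharp\cdot\gamma_{i+1}^\sharp=-1$ off its figure, so this is not a gap specific to your argument.
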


\begin{proof}
  Let $\gamma_1, \ldots, \gamma_6$ be as in Figure \ref{FigureNoms}
  and let $\gamma_i^\sharp\in H^1(\Sigma,\C)$ be the Poincar\'e
  duals of the corresponding cycles. From the signed intersections
  of the $\gamma_i$, we can read
  $\gamma_i^\sharp\cdot\gamma_{i+1}^\sharp=-1$,
  $\gamma_i^\sharp\cdot\gamma_{i-1}^\sharp=1$ for all $i$
  (with cyclic notation), and $\gamma_i^\sharp\cdot\gamma_j^\sharp=0$
  if $j\not\in\{i-1,i+1\}$~mod~6.
  By definition $\gamma_i^\sharp$ is the morphism mapping $\gamma_j$
  to $\gamma_j^\sharp\cdot\gamma_i^\sharp$; it is the image of
  the cocycle in $Z^1(\Gamma_o,\epsilon)$ mapping $c_i$ and $c_{i+1}$
  to $1$ and $c_j$ to $0$ for $j\neq i,i+1$.
  As the oriented curves $\gamma_1$, $\gamma_3$ and $\gamma_5$
  (resp. $\gamma_2$, $\gamma_4$ and $\gamma_6$) bound a subsurface,
  we have $\gamma_1^\sharp+\gamma_3^\sharp+\gamma_5^\sharp=0$ and
  $\gamma_2^\sharp+\gamma_4^\sharp+\gamma_6^\sharp=0$; these
  identities yield the kernel of the surjective morphism
  $f_1\colon\C^6\rightarrow H^1(\Sigma,\C)$ defined by
  $f_1(\lambda_1,\ldots,\lambda_6)=\sum_i\lambda_i\gamma_i^\sharp$.
\begin{figure}[hbt]
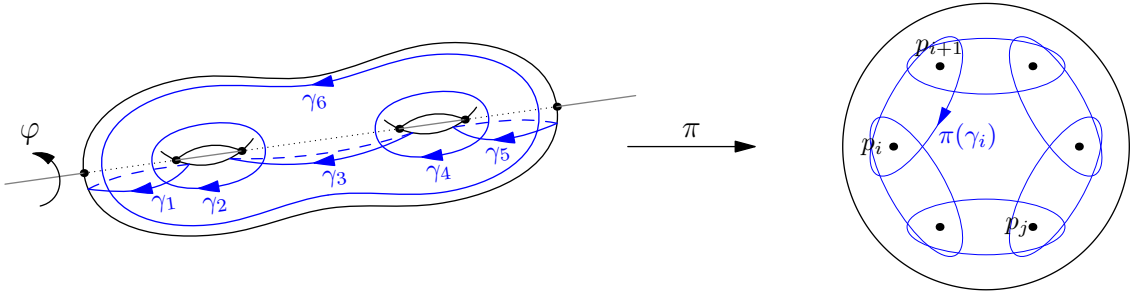

\begin{center}
\begin{asy}
  import geometry;

  picture Sigma;
  //
  //
  point p1=(0,0), p2=(35,0), p3=(60, 0), p4=(120, 0), p5=(145,0), p6=(180,0);

  path contourbas = p1{down}..(50,-30){right}..(90,-26){right}..(130,-30){right}..(180,0){up};
  path contourhaut= p1{up}..(45,30){right}..(90,24){right}..(135,30){right}..(180,0){down};

  dot (Sigma,p1); dot(Sigma,p2); dot(Sigma,p3);
  dot(Sigma,p4); dot(Sigma,p5); dot(Sigma,p6);
  draw(Sigma,contourbas); draw(Sigma,contourhaut);

  //
  path hautroug = p2{dir(33)}..p3{dir(-33)};
  path batroug = (p2+(-5,4))..p2..p3..(p3+(5,4));
  draw(Sigma,hautroug); draw(Sigma,batroug);
  path hautroud = p4{dir(33)}..p5{dir(-33)};
  path batroud = (p4+(-5,4))..p4..p5..(p5+(5,4));
  draw(Sigma,hautroud); draw(Sigma,batroud);

  //
  point m1=relpoint(contourbas,0.03), m2=relpoint(batroug,0.3);
  path gamma1=m1{dir(-15)}..m2{dir(45)}, gamma12=m2{left}..m1{dir(200)};
  draw(Sigma,reverse(gamma1),blue,Arrow(Relative(0.6)));
  label(Sigma,"\footnotesize $\gamma_1$",relpoint(gamma1,0.6),SSE,blue);
  draw(Sigma,gamma12,blue+dashed);
  point m3=p2+(-9,0), m4=0.5*(p2+p3)+(0,-12), m5=p3+(9,0), m6=m4+(0,24);
  path gamma2=m3{down}..m4{right}..m5{up}..m6{left}..cycle;
  draw(Sigma,reverse(gamma2),blue,Arrow(Relative(0.8)));
  label(Sigma,"\footnotesize $\gamma_2$",relpoint(gamma2,0.25),S,blue);
  point m7=relpoint(batroug,0.7), m8=relpoint(batroud,0.3);
  path gamma3=m7{dir(-20)}..m8{dir(20)}, gamma31=m7{dir(-10)}..m8{dir(10)};
  draw(Sigma,reverse(gamma3),blue,Arrow(Relative(0.55)));
  label(Sigma,"\footnotesize $\gamma_3$",relpoint(gamma3,0.55),S,blue);
  draw(Sigma,gamma31,blue+dashed);
  path gamma4=shift(p4-p2)*gamma2;
  draw(Sigma,reverse(gamma4),blue,Arrow(Relative(0.8)));
  label(Sigma,"\footnotesize $\gamma_4$",relpoint(gamma4,0.25),S,blue);
  point dd1=0.5*(p3+p4)+(0,10), dd2=dd1+(0,-10);
  path gamma51=reflect(dd1,dd2)*reverse(gamma1);
  draw(Sigma,reverse(gamma51),blue,Arrow(Relative(0.6)));
  label(Sigma,"\footnotesize $\gamma_5$",relpoint(gamma51,0.6),SW,blue);
  draw(Sigma,reflect(dd1,dd2)*gamma12,blue+dashed);
  point m9=p1+(7,0), m10=0.5*(p2+p3)+(0,-26), m11=0.5*(p3+p4)+(0,-21);
  path gamma6=m9{down}..m10{right}..m11{right};
  path gamma62=reflect(dd1,dd2)*gamma6, gamma63=reflect(p1,p2)*gamma62;
  path gamma64=reflect(p1,p2)*gamma6;
  draw(Sigma,gamma6,blue); draw(Sigma,gamma62,blue);
  draw(Sigma,gamma63,blue,Arrow); draw(Sigma,gamma64,blue);
  label(Sigma,"\footnotesize $\gamma_6$",reflect(p1,p2)*m11,S,blue);

  //
  draw(Sigma,(-30,0)--p1,grey); draw(Sigma,p1--p2,dotted);
  draw(Sigma,p2--p3,grey); draw(Sigma,p3--p4,dotted);
  draw(Sigma,p4--p5,grey); draw(Sigma,p5--p6,dotted);
  draw(Sigma,p6--(210,0),grey);
  //
  draw(Sigma,(-18,-10){dir(5)}..(-18,10){dir(170)},Arrow);
  label(Sigma,"$\varphi$",(-18,10),N);

  add(rotate(8)*Sigma,(-20,0));
  draw((185,10)--(233,10),Arrow); label("$\pi$",((185+233)/2,10),N);
  //
  picture sphere;
  point base = (0,0); real r=35; real R=54; point P1=r*dir(0), P2=r*dir(60);
  point P3=r*dir(120), P4=r*dir(180), P5=r*dir(240), P6=r*dir(300);
  path cercle=circle(base,R);
  path Cerc1=shift(0.5*(P2+P3))*yscale(0.6)*xscale(1.7)*circle((0,0),r/2);
  path Cerc2=rotate(60,(0,0))*Cerc1; path Cerc3=rotate(120,(0,0))*Cerc1;
  path Cerc4=rotate(180,(0,0))*Cerc1; path Cerc5=rotate(240,(0,0))*Cerc1;
  path Cerc6=rotate(300,(0,0))*Cerc1;

  draw(sphere,cercle);
  draw(sphere,Cerc1,0.2pt+blue); draw(sphere,Cerc2,0.2pt+blue);
  add(sphere,arrow(reverse(Cerc2),invisible,FillDraw(blue),Relative(0.27)));
  draw(sphere,Cerc3,0.2pt+blue); draw(sphere,Cerc4,0.2pt+blue);
  draw(sphere,Cerc5,0.2pt+blue); draw(sphere,Cerc6,0.2pt+blue);
  dot(sphere,P1); dot(sphere,P2); dot(sphere,P3);
  dot(sphere,P4); dot(sphere,P5); dot(sphere,P6);
  label(sphere,"\footnotesize $p_j$",P6,0.3*W);
  label(sphere,"\footnotesize $p_i$",P4,W);
  label(sphere,"\footnotesize $p_{i+1}$",P3,N);
  label(sphere,"\footnotesize $\pi(\gamma_i)$",0.13*(P3+P4),blue);
  add(sphere,(320,10));
\end{asy}
\end{center}
\caption{Cycles on the sphere and their lifts}
\label{FigureNoms}
\end{figure}
  The map $f_2\colon\C^6\rightarrow V$ defined by
  $f_2(\lambda_1,\ldots,\lambda_6)=
  [(\lambda_1+\lambda_2,\lambda_2+\lambda_3,\ldots,\lambda_6+\lambda_1)]$
  obviously vanishes on $\ker f_1$, hence $\ker f_1=\ker f_2$,
  so $f_1$ and $f_2$ induce an isomorphism
  $V\simeq H^1(\Sigma,\C)$.
  Straightforward computation gives, for
  $\lambda=(\lambda_1,\ldots,\lambda_6)$, that
  $f_1(\lambda)\cdot f_1(\overline{\lambda})=
  -2i\sum_i \im \lambda_i\overline{\lambda_{i+1}}$ and
  $q(f_2(\lambda))=
  -\frac{1}{2}\sum_i \im \lambda_i\overline{\lambda_{i+1}}$,
  so the two Hermitian forms coincide.
\end{proof}

\begin{remark}
\begin{itemize}
\item[-] Viewed as a subgroup of $\Aut(\Gamma_o)$ as we presented it in
Section \ref{SubsecMarkings}, our leapfrog group has an obvious action
on $V$. However, viewed as $B_6(S^2)\simeq\mcg(\Sigma)/[\varphi]$,
it acts on $X_0(\Gamma_o)$ but
not on the Zariski tangent space at $[\rho_0]$, as the hyperelliptic
involution acts by $-1$; this is an interesting subtlety.
\item[-] Let
$\xi\in H^1(\Gamma_o,\ad_{\rho_0})$ represent a tangent vector in $V$.
The cup-bracket
$[\xi\wedge\xi]\in H^2(\Gamma_o,\ad_{\rho_0})=
H^0(\Gamma_o,\ad_{\rho_0})^*=H^0(\Gamma_o,\R)^*=\R$
is an obstruction to $\xi$ being the tangent vector of a deformation of the
singular representation. This is a very natural description of the quadratic
form $q$ which will not be used in this article.
\end{itemize}
\end{remark}
For the simplicity of the exposition, let us explicit the symplectic form
mentioned above.
At a point $v\neq 0$ in $C$, the tangent space to $C$ is
$\lbrace w\in V\,|\,\re h(v,w)=0\rbrace$, so in $X_{\mathcal E}$,
the tangent space at $[v]=S^1 v$ is the quotient
\[T_{[v]}X_{\mathcal E}=\lbrace w\in V\,|\,\re h(v,w)=0\rbrace/\R iv;\]
now if $[w_1]$, $[w_2]\in T_{[v]}X_{\mathcal E}$ the formula
$\im h(w_1,w_2)$ is well-defined. In terms of the identification above
with $H^1(\Gamma_o,\ad_{\rho_0})$, the vector
$w_i=(\lambda_1^i,\ldots,\lambda_6^i)$ corresponds to the map
$\zeta_i\colon\Gamma\rightarrow\pu(1,1)$,
$\gamma_j\mapsto\left(\begin{array}{cc}0 & \lambda_j^i\\
\overline{\lambda_j^i} & 0\end{array}\right)$. Now the cup-product
of $\zeta_1$ and $\zeta_2$ composed with the Killing form can be computed
with the formula
$\tr \begin{pmatrix} 0 & z \\
\overline{z} & 0\end{pmatrix}\begin{pmatrix} 0 & w \\\overline{w} & 0
\end{pmatrix}=2\re z\overline{w}$,
giving $\tr(\zeta_1\cdot\zeta_2)=-8\im h(w_1,w_2)$.

\subsection{The Morse Lemma}
In this section, we prove that there is a neighbourhood of $[z_0]$ in
$X_0$ which is homeomorphic to $X_\mathcal{E}$ and diffeomorphic out of the singular configuration.
In the identification $V=H^1(\Sigma,\C)=
\Hom(H_1(\Sigma,\R),\C)$,
the subspace $C^\croix$ corresponds to
surjective maps $u:H_1(\Sigma,\R)\to \C$.
We have the
following proposition.
\begin{proposition}\label{morse}
There exist
neigbourhoods $U$ of $0$ in $C$, and $W$ of $\rho_0$ in $\hom(\Gamma_o,\PU(1,1))$
which are $S^1$-invariant,
and an $S^1$-equivariant homeomorphism
$f:U\to W$ which 
\begin{enumerate}
\item is a diffeomorphism on $U\setminus\{0\}$ to $W\setminus\{\rho_0\}$,
\item induces a diffeomorphism between $(U\setminus\{0\})/S^1$ and
$(W\setminus\{\rho_0\})/S^1$,
\item maps bijectively $C^\croix\cap U$ to $\hom^\croix(\Gamma_o,\PU(1,1))\cap W$.
\end{enumerate}
\end{proposition}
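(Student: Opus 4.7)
I would approach this as a Kuranishi/Morse-type local normal form theorem with $S^1$-symmetry for the defining equations of $\hom(\Gamma_o, \PU(1,1))$ at $\rho_0$.

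First, I would set up local coordinates: every involution near $s_0$ is uniquely of the form $\exp\xi(z)\,s_0$ for small $z \in \C$, where $\xi(z) = \begin{pmatrix} 0 & z \\ \overline{z} & 0 \end{pmatrix}$, so tuples of involutions near $\rho_0$ are parametrized by $\C^6$ near $0$, and $\hom(\Gamma_o, \PU(1,1))$ is cut out near $\rho_0$ by the single relation $F(z) := \rho_z(c_6)\cdots\rho_z(c_1) = 1$. Using Baker--Campbell--Hausdorff and $\ad(s_0)|_\C = -\mathrm{id}$, I would Taylor-expand $\log F(z) = L(z) + Q(z) + O(|z|^3)$: the linear part $L(z) = \sum_i (-1)^i z_i$ sits in the $\C$-part of $\pu(1,1)$ and recovers the cocycle description of $V$ from Section~\ref{zariski}; on $\ker L$, the quadratic part $Q$ must land in the $\ad(s_0)$-invariant $\R$-part, and a direct computation identifies this $\R$-valued quadratic form with the form $q$ of Section~\ref{euclidean}.

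Next I would reduce by the $\PU(1,1)$-action: the stabilizer of $\rho_0$ is $S^1$, acting as $z_i \mapsto e^{2i\theta} z_i$, while the two remaining $\pu(1,1)$-directions of conjugation coincide exactly with the coboundary directions $z_i \mapsto z_i + 2w$. Choosing a slice transverse to the $\PU(1,1)/S^1$-orbit and using the implicit function theorem to solve away the $\C$-valued equation $L = 0$ reduces $F = 0$ to a single $S^1$-invariant scalar equation $g(v) = 0$ on $V$ with $g(0) = 0$, $dg(0) = 0$, and Hessian $q$. Since $q$ is non-degenerate, I would apply the equivariant Morse Lemma---the classical Moser proof averaged over the compact group $S^1$---to produce an $S^1$-equivariant local diffeomorphism $\phi$ of $V$ fixing the origin with $g \circ \phi = q$; composed with the slice identification, this yields the desired $S^1$-equivariant homeomorphism $f : U \to W$, immediately giving (1), while (2) follows from standard slice arguments applied to the free $S^1$-action on punctured neighbourhoods. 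For (3), non-elementarity of $\rho_z$ for small $z$ is detected at first order by the $z_i$ not lying on a common real-affine line in $\C$, so $f$ carries $C^\times$ onto $\hom^\times$.

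The principal technical difficulty is the identification, in the first step, of the quadratic Taylor coefficient of $F$ with the form $q$: the BCH computation is elementary but requires careful bookkeeping, and the match with the intrinsic $q$ is most transparent via the cup-bracket interpretation mentioned at the end of Section~\ref{zariski}. A secondary subtlety lies in carrying out Morse's Lemma $S^1$-equivariantly: this is standard for compact group actions, but one must verify that the Moser flow is chosen $S^1$-invariantly so that $\phi$ remains smooth up to the singular orbit.
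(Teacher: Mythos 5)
Your construction is the same as the paper's: the same parametrization of involutions near $s_0$ by $\exp(\xi(z))s_0$, the same Baker--Campbell--Hausdorff identification of the linear part of the relation with $\sum_i(-1)^iz_i$ and of its real (rotational) quadratic part with a multiple of $q$, the same elimination of the $\C$-valued component of the relation by the inverse/implicit function theorem, and the same appeal to an $S^1$-equivariant Morse lemma applied to the remaining scalar invariant function $\varphi$. Your explicit choice of a slice for the conjugation action matches what the paper does implicitly by freezing $z_1=0$. Parts (1) and (2) go through exactly as in the paper.

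The one step that does not work as written is part (3). Alignment is a \emph{closed} condition, so it cannot be ``detected at first order'': the fact that non-collinearity of the $z_i$ approximates non-elementarity of $\rho_z$ for small $z$ does not show that $f$ carries $C^\times\cap U$ \emph{exactly} onto $\hom^\times(\Gamma_o,\PU(1,1))\cap W$. The mechanism in the paper is different: on both sides the aligned locus is precisely the $S^1$-orbit of the real locus $\R^6$ (in the Euclidean model a configuration is aligned iff, after translating and rotating, all $z_i$ are real; on the representation side the slice forces the first fixed point to be $0$, so a common geodesic must pass through $0$ and the same characterization holds). One then needs the Morse-lemma diffeomorphism $\Phi$ to preserve $\R^n$, which is the extra conclusion of Lemma~\ref{lemme_morse}; it is available because $\varphi$ vanishes \emph{identically} on real sextuples (a product of half-turns about collinear points translates along that geodesic, so has no rotation part), a hypothesis that must be verified and fed into the Morse lemma. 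Averaging Moser's flow over $S^1$, as you propose, yields equivariance but not preservation of $\R^n$; without that refinement your argument establishes (1) and (2) but not (3).
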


\begin{proof}
In this proof, we denote $s_0=\begin{pmatrix} i & 0 \\ 0 & -i\end{pmatrix}$ and for any $z_1,\ldots,z_6\in \C$ we set $\xi_j=\begin{pmatrix} 0 & z_j \\
\overline{z}_j & 0\end{pmatrix}$. We will look for a representation $\rho\in
\Hom(\Gamma_o,\PU(1,1))$ such that $\rho(c_j)=\pm\exp(\xi_j)s_0$.
For that reason,
we define in a neighbourhood of $0$ a map $F:\C^6\to \C$ and a map
$\varphi:\C^6\to \R$ by the formula
\[\exp(\xi_6)s_0\cdots \exp(\xi_1)s_0=
-\exp \begin{pmatrix} i\varphi & F\\\overline{F} & -i\varphi \end{pmatrix}.\]
We observe that conjugating the equation by the matrix $\begin{pmatrix}
e^{i\theta} & 0 \\ 0 & e^{-i\theta}\end{pmatrix}$ changes $z_j$ to
$e^{2i\theta}z_j$, $F$ to $e^{2i\theta}F$ and does not change $\varphi$.
Hence the map $\varphi$ is $S^1$-invariant and the map $F$ is $S^1$-equivariant. 

We have $-\exp(\xi_6)\cdots s_0\exp(\xi_1)s_0=\exp(\xi_6)\exp(-\xi_5)
\exp(\xi_4)\exp(-\xi_3)\exp(\xi_2)\exp(-\xi_1)$ and its logarithm is
$\sum_i (-1)^{i}\xi_i +\sum_{j>i}\frac{1}{2}[(-1)^j\xi_j,(-1)^i\xi_i]$ up
to order 2 terms thanks to the Baker-Campbell-Hausdorff formula. Hence, the Taylor
expansion gives $F(z_1,\ldots,z_6)
=\sum_{i=1}^6 (-1)^{i} z_i+o(|z|)$ and $\varphi(z_1,\ldots,z_6)=-2q(z)+o(|z|^2)$.
 
Consider the map $H:\C^6\to \C^6$ given by $H(z_1,\ldots,z_6)=
(z_1,\ldots,z_5,F(z_1,\ldots,z_6))$. By the inverse function theorem,
this is a local diffeomorphism. We observe that by construction, the
map $F$ is $S^1$-equivariant, hence the map $H$ and its inverse
are also $S^1$-equivariant. For small enough $w's$, write $\psi(w_2,\ldots,w_5)
=\varphi(H^{-1}(0,w_2,w_3,w_4,w_5,0))$. The $S^1$-invariant function
$\psi:\C^4\to \R$ has a non-degenerate Hessian at $0$ and vanish
identically if the $w_i's$ are real. We conclude by applying the Morse
Lemma \ref{lemme_morse}. Indeed, if we denote by $\Phi$ the diffeomorphism
provided by the lemma, we simply set $f(w_2,w_3,w_4,w_5)=H^{-1}(0,x_2,x_3,x_4,x_5,0)$
where $(x_2,x_3,x_4,x_5)=\Phi(w_2,w_3,w_4,w_5)$. 

The sextuples $(z_1,\ldots,z_6)\in \R^6$ correspond to linear maps
$v:H_1(\Sigma,\R)\to \C$ with values in $\R$. Hence, the $S^1$-orbit
of real configurations correspond precisely to linear maps of rank 0 or 1
and the diffeomorphism $f$ preserve aligned configurations as expected.  
\end{proof}
Observe that in the above computation, $q$ appears as a second
order obstruction for a
cocycle from being realized by deformations of representations; this is
yet another language for understanding this quadratic form.

\begin{lemma}[Equivariant Morse Lemma]\label{lemme_morse}
Let $\varphi:\C^n\to \R$ be a smooth $S^1$-invariant function, vanishing
on $\R^n$ and such that $\varphi(z)=Q(z)+o(|z|^2)$ for a non-degenerate
Hermitian form $Q$. Then there exist $S^1$-invariant neighbourhoods $U$ and $V$
of $0$ in $\C^n$ and an $S^1$-equivariant
diffeomorphism $\Phi:U\to V$ such that 
\begin{itemize}
\item[-] $D_0\Phi$ is the identity,
\item[-] $\Phi$ preserves $\R^n$,
\item[-] $\varphi\circ \Phi(z)=Q(z)$ for $z\in U$.
\end{itemize}
\end{lemma}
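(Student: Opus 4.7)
The plan is to apply the Moser homotopy trick in its equivariant form. I would set $\varphi_t = Q + t(\varphi - Q)$ for $t \in [0,1]$; each $\varphi_t$ is smooth, $S^1$-invariant, vanishes on $\R^n$, and has quadratic part $Q$ at the origin, hence non-degenerate Hessian. The goal is to construct a smooth isotopy $(\Phi_t)_{t \in [0,1]}$ of $S^1$-equivariant local diffeomorphisms of $(\C^n, 0)$, each preserving $\R^n$, with $\Phi_0 = \mathrm{id}$, $D_0\Phi_t = \mathrm{id}$, and $\varphi_t \circ \Phi_t = Q$. The desired diffeomorphism is then $\Phi := \Phi_1$.

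Realizing $\Phi_t$ as the flow of a time-dependent vector field $Y_t$ and differentiating $\varphi_t \circ \Phi_t = Q$ in $t$, the construction reduces to solving the Moser equation
\begin{equation*}
d\varphi_t(Y_t) = Q - \varphi
\end{equation*}
for $Y_t$ that is smooth, vanishes to order two at $0$, is $S^1$-equivariant, and is tangent to $\R^n$ along $\R^n$. Since $\varphi - Q$ is smooth with vanishing $0$-, $1$- and $2$-jets at the origin, Hadamard's lemma applied twice in real coordinates yields a factorization $\varphi(z) - Q(z) = z^T B(z) z$ with $B$ a smooth symmetric matrix satisfying $B(0) = 0$. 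Similarly, $\nabla \varphi_t(z) = L_t(z) z$ with $L_t(0) = 2\,\mathrm{Hess}_0 Q$ invertible, so $L_t(z)$ remains invertible on a neighbourhood of $0$ for all $t \in [0,1]$. A first candidate is then $Y_t(z) = -L_t(z)^{-T} B(z) z$, which is smooth, vanishes quadratically at the origin, and satisfies the Moser equation; its flow would automatically give $\Phi_t(0) = 0$ and $D_0 \Phi_t = \mathrm{id}$.

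To enforce $S^1$-equivariance I would replace this candidate by its average $\bar Y_t(z) = \int_{S^1} g^{-1} Y_t(gz)\,dg$; the averaged field still solves the Moser equation because $\varphi_t$ and $\varphi - Q$ are both $S^1$-invariant. To enforce tangency to $\R^n$, I exploit that both $\varphi$ and $Q$ vanish on $\R^n$, so $\varphi - Q$ does too, and the Moser equation becomes trivial along $\R^n$. It is therefore satisfied along $\R^n$ by any vector field whose component normal to $\R^n$ vanishes there, and one can modify $\bar Y_t$ by adding a smooth section of $\ker d\varphi_t$ (which has codimension one) that kills the normal-to-$\R^n$ component on $\R^n$, without disturbing the equation. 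Integrating the resulting vector field produces the required isotopy $\Phi_t$ on a small neighbourhood of $0$.

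The hard part will be reconciling the two symmetry corrections, since $S^1$-averaging does not manifestly preserve reality and kernel-subtraction does not manifestly preserve $S^1$-equivariance. The key structural input is that $Q$, being Hermitian and vanishing on $\R^n$, automatically satisfies $Q \circ \sigma = -Q$ where $\sigma$ denotes complex conjugation; this built-in compatibility between the $S^1$-action and the involution $\sigma$ propagates to the obstructions one needs to cancel, and a careful joint averaging — performed once on an explicit solution of the Moser equation rather than in two separate stages — yields a single $Y_t$ meeting all three conditions simultaneously. Once $Y_t$ is in hand, the remainder (integrating the flow, verifying that $\Phi_1$ preserves $\R^n$ and the $S^1$-action, and checking that $D_0\Phi = \mathrm{id}$ from the second-order vanishing of $Y_t$) is routine.
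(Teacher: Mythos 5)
Your overall route --- Moser's trick applied to $\varphi_t=Q+t(\varphi-Q)$, with a Hadamard/Taylor factorization of the data --- is exactly the one the paper invokes (its proof is a one-line reference to the standard Moser-trick proof of the Morse lemma, plus the remark that one only has to check that the solution it produces already has the extra properties). The gap in your write-up is in how you secure those extra properties. You build the canonical candidate $Y_t(z)=-L_t(z)^{-T}B(z)z$, then propose two separate corrections (an $S^1$-average for equivariance, then the addition of a section of $\ker d\varphi_t$ for tangency to $\R^n$), acknowledge that the two corrections interfere, and resolve the conflict only by asserting that ``a careful joint averaging'' works. That assertion is not an argument, and the structural input you invoke for it is not available: $Q\circ\sigma=-Q$ does follow from the hypotheses, but for it to ``propagate to the obstructions'' you would need $\varphi\circ\sigma=-\varphi$, whereas the lemma only assumes $\varphi|_{\R^n}=0$; an $S^1$-invariant function vanishing on $\R^n$ need not be $\sigma$-anti-invariant (e.g.\ $\varphi=Q+Q^2$). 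There is also a smoothness issue with the kernel correction at the origin, where $d\varphi_t=0$ and $\ker d\varphi_t$ is not a codimension-one distribution.

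The good news is that neither correction is needed: your first candidate already satisfies all three conditions, provided you take the integral Taylor formulas $L_t(z)=\int_0^1\mathrm{Hess}\,\varphi_t(sz)\,ds$ (so $L_t$ is symmetric and $L_t(0)=\mathrm{Hess}_0Q$ is invertible) and $B(z)=\int_0^1(1-s)\,\mathrm{Hess}(\varphi-Q)(sz)\,ds$. These formulas are natural under every \emph{linear} map $g$ preserving $\varphi$ and $Q$: one gets $B(gz)=g^{-T}B(z)g^{-1}$ and $L_t(gz)=g^{-T}L_t(z)g^{-1}$, hence $Y_t(gz)=gY_t(z)$, so $Y_t$ is automatically $S^1$-equivariant with no averaging. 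It vanishes to second order at $0$ because $B(0)=0$, which gives $\Phi_t(0)=0$ and $D_0\Phi_t=\mathrm{id}$. Finally, tangency to $\R^n$ is a direct block computation: since $\varphi_t$ and $\varphi-Q$ vanish identically on $\R^n$, the $(\R^n,\R^n)$ block of their Hessians vanishes at real points; hence for $p\in\R^n$ one has $B(p)p\in i\R^n$, while $L_t(p)^{-1}$ has vanishing $(i\R^n,i\R^n)$ block and therefore maps $i\R^n$ into $\R^n$, so $Y_t(p)\in\R^n$ and the flow preserves $\R^n$. With this observation in place of your two-stage correction, your proof closes and coincides with the paper's.
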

\begin{proof}
This is a variation of the standard Morse Lemma with the same proof,
using Moser's trick, see \cite{Lafontaine}, Theorem 3.44.
It is sufficient to check that the solution provided
by the proof has the properties required by the lemma.
\end{proof}

We observe that the space $X^\croix(\Gamma_o)$ as a subspace of
$X(\Sigma)$ is endowed with the Atiyah-Bott symplectic structure denoted
by $\omega_{AB}$. On the other hand, we explained that $X_{\mathcal E}$
is also symplectic with a symplectic structure denoted by $\omega$. We
do not know whether one can make the local diffeomorphism $f$ symplectic
but we will at least need the following weaker statement:

\begin{lemma}
The map $f$ constructed in Proposition \ref{morse} is a symplectomorphism
at first order by which we mean that the following holds:
\[(f^*\omega_{AB})_v=-8\omega+o(v).\]
\end{lemma}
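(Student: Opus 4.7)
The plan is to work before passing to the $S^1$-quotients. Recall that $\omega_{AB}$ on $X^\times(\Gamma_o)\subset X(\Sigma)$ descends from a smooth basic $2$-form on $\Hom^\times(\Gamma_o,\PU(1,1))$ which, at a representation $\rho$, is given on cocycles $u_1,u_2\in Z^1(\Gamma_o,\ad_\rho)$ by the Killing-cup-product pairing $\tr(u_1\cdot u_2)$ evaluated on the fundamental class of $\Sigma$; this is smooth (indeed algebraic) in $\rho$. Likewise $\omega$ descends from the smooth form $\im h$ on $C$. Hence it suffices to establish, as smooth $2$-forms on a neighbourhood of $0\in C$, the identity $f^*\omega_{AB}=-8\,\im h+o(1)$ as we approach $0$.

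First I would identify the derivative $df_0$ as a map $V\to Z^1(\Gamma_o,\ad_{\rho_0})$. From the construction of $f$ in Proposition~\ref{morse}, namely $f$ is obtained from $H^{-1}$ composed with the Morse diffeomorphism $\Phi$, both of which are tangent to the identity at the origin by construction. Thus $df_0$ sends $(z_1,\ldots,z_6)\in V$ to the cocycle $c_j\mapsto\xi_j=\bigl(\begin{smallmatrix}0&z_j\\\overline{z_j}&0\end{smallmatrix}\bigr)$. This is exactly the canonical identification $V\simeq H^1(\Gamma_o,\epsilon)$ set up at the beginning of Section~\ref{zariski}.

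Second, I would read off the value of $(f^*\omega_{AB})_0$ directly from the computation carried out at the end of Section~\ref{zariski}. For $w_1,w_2\in V$ with associated cocycles $\zeta_1,\zeta_2=df_0(w_1),df_0(w_2)$, the trace identity $\tr(\xi_1\xi_2)=2\re(z\overline{w})$ and evaluation on $[\Sigma]$ give
\[(f^*\omega_{AB})_0(w_1,w_2)=\tr(\zeta_1\cdot\zeta_2)=-8\,\im h(w_1,w_2),\]
so that $(f^*\omega_{AB})_0$ and $-8\,\im h$ coincide as bilinear forms on $V$. Third, I would extend to a neighbourhood by smoothness: since the Goldman $2$-form depends smoothly on $\rho$ and $f$ is smooth, $f^*\omega_{AB}$ is a smooth $2$-form on $U$, and a Taylor expansion at $0$ yields $(f^*\omega_{AB})_v=(f^*\omega_{AB})_0+O(|v|)=-8\,\im h+O(|v|)$. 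Descending to the $S^1$-quotient gives the announced $(f^*\omega_{AB})_v=-8\omega+o(v)$.

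The main obstacle is the first step: one must verify carefully that the chain of changes of variables produced by the inverse function theorem and the equivariant Morse lemma really have derivative the canonical identification $V\to Z^1(\Gamma_o,\ad_{\rho_0})$ at the origin, and that the Zariski-tangent viewpoint is compatible with the explicit Goldman cup-product formula through which $\omega_{AB}$ is pulled back to $\Hom$. Once this compatibility is nailed down, Step~2 is merely quoting the cup-product calculation already performed in Section~\ref{zariski}, and Step~3 is abstract smoothness.
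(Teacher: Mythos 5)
Your proposal is correct and follows essentially the same route as the paper, which simply cites Goldman's cup-product-plus-trace description of $\omega_{AB}$, Proposition \ref{morse}, and the computation ending Subsection \ref{zariski}; you have merely spelled out the details the paper leaves implicit (that $df_0$ is the canonical identification $V\simeq H^1(\Gamma_o,\ad_{\rho_0})$ because $H$ and $\Phi$ are tangent to the identity, and that smoothness of the cup-product pairing in $\rho$ upgrades the pointwise identity at $0$ to the stated first-order approximation).
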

\begin{proof}
Following Goldman (see \cite{Goldman84}), the Atiyah-Bott structure at $[\rho]\in X^\croix(\Sigma)$
is induced by the cup-product on $H^1(\Sigma,\ad_\rho)$ followed by the trace. 
The claimed approximation follows from Proposition \ref{morse}
and the computation ending Subsection \ref{zariski}.
\end{proof}

\subsection{The Grassmannian of Lagrangians}\label{lagrangian}

There is yet another description of the space $X_\mathcal{E}^\croix$ of
non-aligned Euclidean configurations which will be crucial in the last step
of the proof of Theorem \ref{SexErgodique}.
It uses the Grassmannian of Lagrangians in $H_1(\Sigma,\R)$
denoted by $\mathcal{L}$. 
It is a 3-dimensional manifold and the tangent space at $L\subset
H_1(\Sigma,\R)$ is canonically
isomorphic to the space of quadratic forms on $L$.
Indeed, the tangent space at $L$ to the Grassmannian of $2$-planes is
canonically isomorphic to the space
$\Hom(L,H_1(\Sigma,\R)/L)\simeq\Hom(L,L^*)$, where we identify
$H_1(\Sigma,\R)/L$ with $L^*$ by the symplectic pairing, and the Lagrangian
condition amounts to the symmetry of the corresponding bilinear maps.
Dually, the cotangent space of $\mathcal{L}$ at $L$ is isomorphic
to the space of quadratic forms on the dual space $L^*$. We denote by
$T^*_+\mathcal{L}\subset T^*\mathcal{L}$ the set of pairs $(L,\alpha)$
where $\alpha$ is a positive definite quadratic form on $L^*$.

\begin{proposition}
There is a diffeomorphism $\Lambda:X^\croix_{\mathcal{E}}\to
T^*_+\mathcal{L}$ which is equivariant with respect to the action of
$\Sp(H_1(\Sigma,\R))$. 
\end{proposition}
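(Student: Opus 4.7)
The plan is to define $\Lambda$ explicitly and check it is a diffeomorphism by producing a smooth inverse. Given a class $[v] \in X_{\mathcal E}^\croix$, we use the identification $V = H^1(\Sigma,\C) = \Hom_\R(H_1(\Sigma,\R),\C)$ from Subsection \ref{zariski} to represent $v$ by a surjective $\R$-linear map $u\colon H_1(\Sigma,\R) \to \C$. Set $L := \ker u$; via the isomorphism $H_1(\Sigma,\R)/L \cong L^*$ induced by the symplectic pairing, the map $u$ descends to an $\R$-linear isomorphism $\bar u\colon L^* \to \C$. Define $q$ to be the pull-back by $\bar u$ of the standard Euclidean quadratic form on $\C = \R^2$, and set $\Lambda([v]) := (L,q)$.

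The first key step is to check that $L$ is Lagrangian. Writing $u = \alpha + i\beta$ with $\alpha, \beta \in H^1(\Sigma,\R)$ and expanding the cup product (using $\alpha \cup \alpha = \beta \cup \beta = 0$ and antisymmetry) gives $v \cup \bar v = -2i\,\alpha \cup \beta$, so the identification $h(v,v) = \frac{1}{4i}\,v\cup\bar v$ from Subsection \ref{zariski} yields $h(v,v) = -\tfrac{1}{2}\,\alpha \cup \beta$. Hence the defining condition $v \in C$ is equivalent to $\alpha \cup \beta = 0$. Under Poincaré duality, $\alpha$ and $\beta$ correspond to independent classes $a,b \in H_1(\Sigma,\R)$ with $a \cdot b = 0$, so they span an isotropic (hence Lagrangian) plane. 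A direct verification using $\alpha(\gamma) = \omega(a,\gamma)$ and $\beta(\gamma) = \omega(b,\gamma)$ identifies this plane with $\ker u = L$.

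Next, $\Lambda$ descends to $X_{\mathcal E}^\croix = C^\croix/S^1$: the action $u \mapsto e^{i\theta}u$ leaves $\ker u$ unchanged, and since the Euclidean form on $\C$ is $S^1$-invariant, the pulled-back quadratic form $q$ is the same. Surjectivity of $u$ forces $q$ to be positive definite, so the image lies in $T_+^*\mathcal{L}$. The $\Sp(H_1(\Sigma,\R))$-equivariance is immediate: the symplectic group acts on $V$ by $u \mapsto u \circ \phi^{-1}$, sending $\ker u$ to $\phi(\ker u)$ (which is Lagrangian because $\phi$ is symplectic), and the quadratic form is transported accordingly via the natural identification $\phi(L)^* \cong H_1(\Sigma,\R)/\phi(L)$.

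To invert $\Lambda$, given $(L,q) \in T_+^*\mathcal{L}$ I choose any $\R$-linear isometry $\psi\colon (L^*, q) \to (\C, |\cdot|^2)$ and compose with the quotient $H_1(\Sigma,\R) \to H_1(\Sigma,\R)/L \cong L^*$ to obtain a surjective $u \in \Hom_\R(H_1(\Sigma,\R),\C)$ with $\ker u = L$ and pull-back form equal to $q$; the ambiguity in $\psi$ is absorbed in passing to the $S^1$-quotient. Both $\Lambda$ and its inverse depend smoothly on the data, and the dimension count $\dim X_{\mathcal E}^\croix = 6 = \dim T_+^*\mathcal{L}$ provides a consistency check. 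The main obstacle is the first step: the careful unpacking of $h(v,v)=0$ into the Lagrangian condition on $\ker u$, chaining together the identification of $h$ with the cup-product pairing from Subsection \ref{zariski}, Poincaré duality, and the isomorphism $H_1(\Sigma,\R)/L \cong L^*$. Everything else is essentially linear algebra on the fibres.
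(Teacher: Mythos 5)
Your construction of $\Lambda$ is the same as the paper's: send the $S^1$-class of a surjective $u\colon H_1(\Sigma,\R)\to\C$ with $q(u)=0$ to the pair $(\ker u,\,|u|^2)$. The only real variation is in the verification that $\ker u$ is Lagrangian: you write $u=\alpha+i\beta$ and reduce $q(u)=0$ to $\alpha\cup\beta=0$ via Poincar\'e duality, whereas the paper evaluates $q$ on a symplectic basis adapted to $\ker u$ and uses the dichotomy that a $2$-plane is either Lagrangian or symplectic. Both verifications are correct and equally short.

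There is, however, one step that does not hold as you state it: that ``the ambiguity in $\psi$ is absorbed in passing to the $S^1$-quotient.'' The set of linear isometries $(L^*,q)\to(\C,|\cdot|^2)$ is a torsor under $O(2)$, while the $S^1$-action on $C^\croix$ only accounts for $SO(2)$. Concretely, replacing $u=\alpha+i\beta$ by $\bar u=\alpha-i\beta$ (post-composition with a reflection of $\C$) gives an element which still lies in $C^\croix$ (by your own formula $q(\bar u)=-q(u)=0$), has the same kernel and induces the same quadratic form $|u|^2$ on $L^*$, yet is not in the $S^1$-orbit of $u$ whenever $u$ is surjective: for $u(a_1)=u(a_2)=0$, $u(b_1)=1$, $u(b_2)=i$, no rotation $e^{i\theta}$ fixes $1$ and sends $i$ to $-i$. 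So the map you (and the paper) define is a priori a $2$-to-$1$ covering of $T^*_+\mathcal{L}$, the fibre recording the two orientations of $H_1(\Sigma,\R)/L$ pulled back from $\C$; injectivity is exactly the point that needs an argument, and the paper's ``this construction can be easily reversed'' is equally brief there. The clean repair is to take $\mathcal{L}$ to be the Grassmannian of \emph{oriented} Lagrangians (equivalently, to record the orientation induced by $u$ as part of the data), which is harmless for the local non-integrability arguments that use this model later. Apart from this shared gap, your argument (equivariance, positive-definiteness from surjectivity, smoothness of both directions) matches the paper's.
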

\begin{proof}
Recall that an element of $X^\croix_{\mathcal{E}}$ is the $S^1$-orbit
of a surjective linear map $u:H_1(\Sigma,\R)\to\C$ satisfying $q(u)=0$. 
Set $L=\ker u$ and show that $q(u)=0$ if and only if $L$ is Lagrangian. 

The quantity $q(u)$ is computed from any symplectic basis
$a_1,b_1,a_2,b_2$ of $H_1(\Sigma,\R)$ by the formula
$q(u)=\frac{1}{4}\im(u(a_1)\overline{u(b_1)}+u(a_2)\overline{u(b_2)})$.
If $L$ is Lagrangian, we can ensure that $a_1,a_2$ form a basis of
$L$ and hence $u(a_1)=u(a_2)=0$ and $q(u)=0$. If $L$ is not
Lagrangian, it is symplectic and we can form a basis of $L$ with
$a_1$ and $b_1$, which implies that $u(a_2)$ and $u(b_2)$ are
linearly independent and hence $q(u)\ne 0$. 

Hence, given $u:H_1(\Sigma,\R)\to\C$ with $q(u)=0$ and $L$ its
(Lagrangian) kernel, the expression $\alpha(x)=|u(x)|^2$ is a positive
definite quadratic form on $H_1(\Sigma,\R)/L\simeq L^*$, hence
$\alpha$ belongs to $T^*_+ L$ and does not change if we multiply
$u$ by a phase. This construction can be easily reversed and is
symplectically invariant
hence the map $\Lambda:u\mapsto (L,\alpha)$
has the required properties.
\end{proof}

\begin{lemma} The map $\Lambda$ is a symplectomorphism up to a constant.
\end{lemma}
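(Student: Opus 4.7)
The plan is to combine $G := \Sp(H_1(\Sigma,\R))$-equivariance with a single pointwise computation.

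Both symplectic forms in question are $G$-invariant. The canonical symplectic form on $T^*\mathcal{L}$ is manifestly natural under diffeomorphisms of the base, and the form $\omega$ on $X^\times_\mathcal{E}$, by the identification of Section~\ref{zariski}, is (up to a universal scalar) the imaginary part of the cup-product pairing on $H^1(\Sigma,\C)$, which is visibly $G$-invariant. Since $\Lambda$ was shown to be $G$-equivariant in the previous proposition, $\Lambda^*\omega_{T^*\mathcal{L}}$ is again a $G$-invariant symplectic form on $X^\times_\mathcal{E}$.

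Next, $G$ acts transitively on $T^*_+\mathcal{L}$: transitivity on the Lagrangian Grassmannian $\mathcal{L}$ is classical, and the stabilizer of any Lagrangian $L_0$ contains a Levi factor $GL(L_0)$ that acts transitively on positive-definite quadratic forms on $L_0^*$. Consequently, if the two symplectic forms are proportional at a single point, $G$-equivariance propagates the proportionality with the same constant across all of $T^*_+\mathcal{L}$.

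To verify proportionality at one distinguished point $(L_0,\alpha_0)$, I would work in a Darboux chart. Pick a Lagrangian complement $L_0'$ of $L_0$ (yielding local coordinates on $\mathcal{L}$ near $L_0$ by parameterising nearby Lagrangians as graphs over $L_0$, hence Darboux coordinates on $T^*\mathcal{L}$ near $(L_0,\alpha_0)$) and a surjection $u_0 \colon H_1(\Sigma,\R) \to \C$ with $\ker u_0 = L_0$ and $|u_0|^2 = \alpha_0$ on $L_0^* = H_1(\Sigma,\R)/L_0$. The restrictions $\dot u|_{L_0}$ and $\dot u|_{L_0'}$ of an infinitesimal perturbation $\dot u$ of $u_0$ correspond, respectively, to the infinitesimal change of the kernel (an element of $\mathrm{Sym}^2(L_0^*) = T_{L_0}\mathcal{L}$) and the infinitesimal change of the Hermitian modulus on $L_0^*$ (an element of $\mathrm{Sym}^2(L_0) = T^*_{L_0}\mathcal{L}$). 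Tracing the cup-product formula for $\omega$ through this identification should recover the canonical pairing between $\mathrm{Sym}^2(L_0^*)$ and $\mathrm{Sym}^2(L_0)$ underlying $\omega_{T^*\mathcal{L}}$, up to a universal scalar.

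The main technical obstacle I anticipate is precisely this last bookkeeping step: checking carefully that the splitting of $\dot u$ into kernel-deforming and modulus-deforming components matches the Darboux decomposition of $T^*\mathcal{L}$, and that the restriction of the cup-product pairing on $H^1(\Sigma,\C)$ reproduces the canonical bilinear pairing rather than some unrelated combination. Once these identifications are set up correctly, the proportionality of the two forms falls out as a direct linear-algebra verification, and the arguments of the first two paragraphs then upgrade it to a global identity $\Lambda^*\omega_{T^*\mathcal{L}} = c\,\omega$ for a universal nonzero constant $c$.
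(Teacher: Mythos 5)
Your strategy is essentially the paper's: the paper also reduces to a single point of $T^*_+\mathcal{L}$ by choosing an adapted symplectic basis (i.e.\ by the same transitivity-plus-equivariance argument you describe) and then performs an explicit pointwise computation. So the framework is sound, and your observation that the Siegel parabolic's Levi factor $GL(L_0)$ acts transitively on positive definite forms on $L_0^*$ correctly justifies the reduction. The one caveat is that the step you flag as ``the main technical obstacle'' and leave unexecuted is in fact the entire substantive content of the lemma: the paper introduces coordinates $(p_i,q_i)$ on $T^*\mathcal{L}$ (parametrising Lagrangians as graphs and quadratic forms by their Gram matrices, with $\omega_{\mathcal L}=dp_1\wedge dq_1+2\,dp_2\wedge dq_2+dp_3\wedge dq_3$), computes the six partial derivatives of $\Lambda^{-1}$ at the base point, and evaluates $\omega_V$ on them via the cup-product formula to extract the constant $\tfrac12$. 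Your splitting of $\dot u$ into $\dot u|_{L_0}$ (kernel-deforming, after composing with $u_0^{-1}$ to land in $\Hom(L_0,L_0^*)$) and $\dot u|_{L_0'}$ (modulus-deforming) is the right bookkeeping, but you still need to quotient by the direction $iu_0$ and actually trace the pairing through; until that is done the proportionality constant, and indeed the nondegeneracy of the match, is asserted rather than proved.
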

\begin{proof}
Let $(L,g)$ be any point in $T^*_+\mathcal L$, one can find a symplectic
basis such that $L=\Span(a_1,a_2)$ and $a_1^*,a_2^*$ is an orthonormal
basis of $L^*$ with respect to $g$. The map $u=\Lambda^{-1}(L,g)$ is
given by $u(a_1)=u(a_2)=0, u(b_1)=1,u(b_2)=i$. 
There is a local coordinate system $(p_i,q_i)_{i=1,2,3}$ on $T^*\mathcal{L}$
given by setting $L_{p}=\R e_1\oplus\R e_2$ where $e_1=(1,0,p_1,p_2)$,
$e_2=(0,1,p_2,p_3)$ and $g_q$ has the matrix $\begin{pmatrix}
q_1 & q_2 \\ q_2 & q_3\end{pmatrix}$ in the basis $e_1^*,e_2^*$. In
that coordinate system, the symplectic form reads $\omega_{\mathcal L}
=\tr dg_p\wedge dg_q=dp_1\wedge dq_1+2 dp_2\wedge dq_2+dp_3\wedge dq_3$. 

The map $u_{p,q}=\Lambda^{-1}(L_p,g_q)$ is defined by sending
$e_1$ and $e_2$ to $0$ and $e_3,e_4$ to any basis $v_1^q,v_2^q$
of $\R^2$ whose Gram matrix is $g_q$. Explicitly one has 
\[u_{p,q}(a_1)=-p_1v_1^q-p_2v_2^q,u_{p,q}(a_2)=-p_2v_1^q-
p_3v_2^q,u_{p,q}(b_1)=v_1^q,u_{p,q}(b_2)=v_2^q.\]
Writing as a vector the values taken on the symplectic basis we
get at $(0,0,0,1,0,1)$ the following derivatives:
\[\frac{\partial\Lambda^{-1}}{\partial p_1}=(-1,0,0,0),\frac{\partial
\Lambda^{-1}}{\partial p_2}=(-i,-1,0,0),\frac{\partial\Lambda^{-1}}
{\partial p_3}=(0,-i,0,0).\]
Using the formulas $\frac{\partial v_1^q}{\partial q_1}=\frac{1}{2},
\frac{\partial v_1^q}{\partial q_2}=\frac{\partial v_1^q}{\partial q_3}=0$
and $\frac{\partial v_2^q}{\partial q_1}=0,\frac{\partial v_2^q}
{\partial q_2}=1,\frac{\partial v_2^q}{\partial q_3}=\frac{i}{2}$ we get
\[\frac{\partial\Lambda^{-1}}{\partial q_1}=(0,0,\frac{1}{2},0),
\frac{\partial\Lambda^{-1}}{\partial q_2}=(0,0,0,1),\frac{\partial\Lambda^{-1}}
{\partial q_3}=(0,0,0,\frac{i}{2}).\]
On the other hand, the symplectic structure on $V$ reads 
\begin{equation}\label{symp_expression}
\omega_V(v,w)=-\langle v(a_1),w(b_1)\rangle +\langle w(a_1),
v(b_1)\rangle-\langle v(a_2),w(b_2)\rangle+\langle w(a_2),v(b_2)\rangle.
\end{equation}
By checking in the basis, we find $\Lambda^*\omega_V=\frac{1}{2}
\omega_{\mathcal L}$ as asserted.
\end{proof}


\subsection{Topology of hourglasses}\label{topsab2}

\begin{proposition}
The space $X_0$ is homeomorphic to $X_\mathcal{E}$. The homeomorphism
maps the configuration $[\rho_0]$ to $0$, and $C^\croix/S^1$ to $X_0^\croix$. 
In particular we have:
\begin{itemize}
\item[-] $X_0^*$ is connected, simply connected and satisfies $\pi_2(X_0^*)=\Z$. 
\item[-] $X_0^\croix$ is homotopically equivalent to $\mathcal{L}$. Hence it is
connected and satisfies $\pi_1(X_0^\croix)=\pi_2(X_0^\croix)=\Z$.
\end{itemize}
\end{proposition}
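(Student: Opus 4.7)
The plan is to first construct the homeomorphism $X_0 \cong X_{\mathcal{E}}$ by bootstrapping the local chart of Proposition~\ref{morse}, then to extract the homotopy types of $X_0^*$ and $X_0^\times$ from the Euclidean model, where the computations reduce to familiar manifolds.

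For the homeomorphism, I would exploit that both spaces carry a natural cone structure. On the $X_0$-side, Section~\ref{topsab1} established that the proper critical-point-free invariant $f\colon X_0^*\to(1,+\infty)$ trivializes $X_0^*$ as $(1,+\infty)\times K$ with $K=f^{-1}(\{r\})$, and $X_0$ as the mapping cone to the apex $[\rho_0]$. On the $X_{\mathcal{E}}$-side, the $\mathbb{R}_{>0}$-scaling on $V$ commutes with $S^1$, preserves $C$, and fixes only the origin, so $X_{\mathcal{E}}$ is the open cone over $C_1/S^1$ with $C_1=\{v\in C : |v|=1\}$. Proposition~\ref{morse} descends to an $S^1$-equivariant homeomorphism $\bar{\Phi}$ between open neighbourhoods of $[0]$ and $[\rho_0]$. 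Choosing $\varepsilon>0$ small enough that $\{|v|=\varepsilon\}/S^1$ lies in the source of $\bar{\Phi}$, its image lies in some level $f^{-1}(\{r_\varepsilon\})$, which is identified with $K$ via the trivialization. Extending radially using the two cone structures produces the desired global homeomorphism sending $[0]$ to $[\rho_0]$; the fact that $C^\times/S^1$ lands in $X_0^\times$ is built into the chart, as the last sentence of the proof of Proposition~\ref{morse} records.

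For the homotopy type of $X_0^*\simeq (C\setminus\{0\})/S^1$, decompose $V=V_+\oplus V_-$ orthogonally for $h$ into two complex planes $V_\pm\cong\mathbb{C}^2$ on which $h$ is of opposite definite signs, possible because $h$ has signature $(2,2)$. Then $C\setminus\{0\}=\{(v_+,v_-) : |v_+|=|v_-|>0\}$, and the $S^1$-equivariant normalization $(v_+,v_-)\mapsto(v_+/|v_+|,\,v_-/|v_-|)$ is a deformation retraction onto $S^3\times S^3$, on which $S^1$ acts freely and diagonally. The long exact sequence of the $S^1$-bundle $S^3\times S^3\to(S^3\times S^3)/S^1$,
\begin{equation*}
0=\pi_2(S^3\times S^3)\longrightarrow\pi_2(X_0^*)\longrightarrow\pi_1(S^1)=\mathbb{Z}\longrightarrow\pi_1(S^3\times S^3)=0,
\end{equation*}
immediately yields $\pi_1(X_0^*)=0$ and $\pi_2(X_0^*)=\mathbb{Z}$; connectedness is clear. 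For $X_0^\times$, Section~\ref{lagrangian} furnishes a diffeomorphism $X_{\mathcal{E}}^\times\simeq T_+^*\mathcal{L}$, and the convex cone structure of the fibres of $T_+^*\mathcal{L}\to\mathcal{L}$ allows a fibrewise deformation retraction onto the zero section, so $X_0^\times\simeq\mathcal{L}=U(2)/O(2)$. The map $\det{}^2\colon\mathcal{L}\to U(1)$ (well defined since $\det(o)^2=1$ for $o\in O(2)$) is a submersion with fibre $SU(2)/SO(2)=S^2$, so $\mathcal{L}$ is an $S^2$-bundle over $S^1$; its long exact sequence then gives $\pi_1(\mathcal{L})=\pi_1(S^1)=\mathbb{Z}$ (fibre simply connected) and $\pi_2(\mathcal{L})=\pi_2(S^2)=\mathbb{Z}$ (since $\pi_2(S^1)=0$ and $\pi_1(S^2)=0$), completing the claim.

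The main obstacle is the first step: the cone extension is not quite a formality, since Proposition~\ref{morse} only produces a homeomorphism on an unspecified neighbourhood, so one must check that the radial extension is a genuine homeomorphism, in particular that it remains continuous at the apex. This reduces to a standard cone-matching argument, relying on the properness of $f$ and on $\bar{\Phi}$ being $S^1$-equivariant on an honest neighbourhood of $0\in C$.
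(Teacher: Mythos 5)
Your proof is correct and follows essentially the same route as the paper: the cone-matching of $X_0$ and $X_{\mathcal E}$ via Proposition~\ref{morse} and the level sets of $f$, the signature-$(2,2)$ retraction of $C\setminus\{0\}$ onto $S^3\times S^3$ followed by the long exact sequence of the $S^1$-bundle, and the contractible-fibre retraction of $T^*_+\mathcal{L}$ onto $\mathcal{L}$. The only (harmless) divergences are that you compute $\pi_1(\mathcal{L})$ and $\pi_2(\mathcal{L})$ directly from the fibration $\det^2\colon U(2)/O(2)\to U(1)$ with fibre $SU(2)/SO(2)\cong S^2$, where the paper passes to the oriented double cover $U(2)/SO_2$, and that the paper makes explicit the one point you leave implicit: the gradient flow of $f$ preserves aligned configurations, which is what globalizes the identification of $C^\times/S^1$ with $X_0^\times$ beyond the local chart of Proposition~\ref{morse}.
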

\begin{proof}
By Proposition \ref{morse}, a punctured neighbourhood of $\rho_0$ is diffeomorphic
to $X_\mathcal{E}^*$.
As we saw in Section \ref{topsab1}, $X_0^*$ is diffeomorphic
to the set $f^{-1}(1,1+\varepsilon)$ for any $\varepsilon$ and the result follows. 
The Hermitian form $h$ has signature $(2,2)$ hence in some coordinates one has
$q(z_1,w_1,z_2,w_2)=|z_1|^2+|z_2|^2-|w_1|^2-|w_2|^2$. Normalizing the non-zero vectors
$(z_1,z_2)$ and $(w_1,w_2)$ we get the homeomorphism  
 $C^*\simeq S^3\times S^3\times \R$ and hence the homotopy equivalence $X_0^*\simeq S^3\times
S^3/S^1$. The long exact sequence of the fibration
$S^3\times S^3\rightarrow (S^3\times S^3)/S^1$ gives the fundamental
groups of $X_0^*$ claimed in the proposition.

Using the canonical Riemannian metric on $X_0^*$, the gradient flow of
$f$ preserves the set of sextuples which are on the same line. Hence the
same argument as above works for the space $X_0^\croix$ which is
homeomorphic to $C^\croix/S^1$. This latter space is homeomorphic to
$T_+^*\mathcal{L}$ which is a fiber bundle over $\mathcal{L}$ with convex
fiber. Finally we have the homotopy equivalence $X_0^\croix\simeq \mathcal{L}$. 
Considering the two-fold covering of $\mathcal{L}$ consisting of oriented Lagrangians in
$H_1(\Sigma,\R)$, we get a space $\tilde{\mathcal{L}}=U(2)/SO_2$, see \cite{McDuffSalamon}, Section 2.3.
The exact sequence of this fibration gives $\pi_1(\tilde{\mathcal{L}})
\simeq\pi_2(\tilde{\mathcal{L}})\simeq\Z$ and the same is true for $\mathcal{L}$. 
\end{proof}

\section{Dynamics}\label{SectionDyn}
\subsection{The strategy}
As in all proofs of ergodicity of mapping group actions on representation
spaces, we will use the periodicity properties of Goldman twist flows.
These flows are particularly simple to describe in the framework of sextuples. 
Let $z=(x_1,\ldots,x_6)\in\Sex_0$ be a sextuple and suppose that
$s_3s_2s_1$ is an elliptic element, that is a rotation over a point $y$.
Denote by $R_t$ the rotation of angle $t$ over $y$ and pick $\theta$
such that $s_3s_2s_1=R_\theta$. Then the formula 
\[\Phi_{123}^t z=(R_t x_1,R_t x_2,R_t x_3,x_4,x_5,x_6)\]
 defines a $2\pi$-periodic flow on $X_0$ such that 
$\Phi_{123}^\theta$ is the half-twist around the first three points.
A $\mcg(S_o)$-invariant function on $X_0$ is almost
 everywhere constant along the flow as the angle $\theta$ is almost
 everywhere irrational. However this argument works only when
 $s_3s_2s_1$ is elliptic. 

All such flows are indexed by {\it partition curves}, that is simple
curves $\gamma$ in $S_p$, the sphere minus the six marked points,
which divide the set of points into two
subsets of cardinality $3$. Indeed, for any $[\rho]\in X_0(\Gamma_o)$
we set $\Theta_\gamma([\rho])\in\mathbb{R}/2\pi\mathbb{Z}$ to be the
rotation number of $\rho(\gamma)$ (that is its angle if it is a rotation and
0 otherwise). Where $\Theta_\gamma$ is smooth, we define $X_\gamma$
to be the symplectic gradient of $\Theta_\gamma$ and extend it by $0$
where it is not defined. We denote by $\Phi_\gamma^t$ the flow of
$X_\gamma$. This definition is coherent in the sense that $\Phi_{123}
=\Phi_\gamma$ for a standard partition curve enclosing the 3 first points. 

\begin{remark} Let $\tilde{\gamma}$ be the preimage of $\gamma$ in
the surface $\Sigma$: then $\Phi_\gamma^t$ is the Goldman flow on
$X(\Sigma)$ associated to the separating curve $\tilde{\gamma}$ provided
that $\rho(\tilde{\gamma})$ is elliptic.
\end{remark}

\begin{definition}
For any $z\in X_0$ we set 
\[ \mathcal{D}_z=\Span\{X_\gamma(z), \gamma\text{ partition curve}\}
\subset T_{z}X_0.\]
\end{definition}
The aim of this section is to prove the following proposition:
 
\begin{proposition}\label{prop_nonintegrable}
The distribution $\mathcal{D}$ is completely non-integrable on the subset
$U\subset X_0$ of non-pinched configurations. 
\end{proposition}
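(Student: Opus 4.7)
The plan is to reduce to a punctured neighbourhood of the singular point $[\rho_0]$, transport the problem to the Lagrangian model of Section \ref{lagrangian}, and then verify the bracket-generating condition by an explicit computation.

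First, $\mathcal{D}$ is $\mcg(S_o)$-invariant, since the action permutes partition curves and hence the collection $\{X_\gamma\}$; and $U$ is itself $\mcg(S_o)$-invariant by definition of pinched configurations. Complete non-integrability is an open condition (it amounts to the non-vanishing of a determinant built from finitely many iterated brackets). Hence if I can establish non-integrability on an open punctured neighbourhood $W$ of $[\rho_0]$, then for any $[z]\in U$ Theorem \ref{SexMinimal} provides a $\gamma_n\in\mcg(S_o)$ with $\gamma_n\cdot[z]\in W$, and equivariance of $\mathcal{D}$ transports non-integrability back to $[z]$.

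Second, I would use Proposition \ref{morse} and the subsequent lemma to identify the punctured neighbourhood with $C^\croix/S^1\simeq X_\mathcal{E}^\croix$, equipped with a symplectic structure matching the Atiyah--Bott form to first order. From Section \ref{euclidean}, the Euclidean limit of the twist flow $\Phi_\gamma^t$ associated with a partition curve $\gamma=\{i,j,k\}\,|\,\{l,m,n\}$ is precisely the Hamiltonian flow of the signed area $\operatorname{Area}(p_i,p_j,p_k)$, which rotates these three points around $p_i-p_j+p_k$. Hence, in the Euclidean model, $X_\gamma$ becomes (up to lower-order corrections at the origin) the Hamiltonian vector field of this area function, and the ten partitions of $\{1,\ldots,6\}$ into two triples give ten such functions, well-defined up to sign on $C$. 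Passing to the Lagrangian model $X_\mathcal{E}^\croix\simeq T^*_+\mathcal{L}$, with the Darboux coordinates $(p_i,q_i)_{i=1,2,3}$ of Section \ref{lagrangian}, each signed area becomes an explicit polynomial function whose Hamiltonian vector field has a manageable expression. I would then exhibit three partition curves whose vector fields span the vertical cotangent directions at a generic point $(L,\alpha)$, and three further partition curves whose Poisson brackets with the first three recover the horizontal directions tangent to $\mathcal{L}$, giving six independent directions in iterated brackets of the $X_\gamma$.

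The main obstacle will be to identify the degeneracy locus of this natural construction and show that it coincides, through the identifications of Section \ref{neighbourhood}, with the projection of the pinched configurations; only then does non-integrability hold throughout $U$ near $[\rho_0]$, rather than merely on an open dense subset. The pinched locus has codimension three in $X_0$ (by Observation \ref{UConnexe}), so this matching of loci is a reasonable expectation, but it requires unpacking carefully how the Euclidean limit of a pinched hyperbolic sextuple manifests in the $(L,\alpha)$ description of $T^*_+\mathcal{L}$. Once this is controlled, the reduction of the first paragraph yields the proposition.
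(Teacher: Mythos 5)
Your reduction step and choice of models are exactly those of the paper: invariance of $\mathcal{D}$ plus Theorem \ref{SexMinimal} localizes the problem near $[\rho_0]$, openness of complete non-integrability allows replacing the $X_\gamma$ by the Hamiltonian fields $Y_\gamma$ of the quadratic approximations $q_\gamma$ (the paper justifies this by the Taylor expansion $F_\gamma=\pm 8q_\gamma+o(|\xi|^2)$ of the trace functions, which is the rigorous form of your ``Euclidean limit of the twist flow is the area Hamiltonian flow''), and the final verification is indeed carried out in the model $X_\mathcal{E}^\croix\simeq T^*_+\mathcal{L}$. The gap is that the heart of the argument --- showing that the $Y_\gamma$ and their brackets span the tangent space --- is left as ``an explicit computation'' that you have not performed, and your stated plan for it rests on a misconception. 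The span $E_v=\Span\{Y_\gamma(v)\}$ at a point $v\in C^\croix$ is only \emph{four}-dimensional: in $T^*_+\mathcal{L}$ it is $d\pi^{-1}(\R g)$, i.e.\ all three vertical directions plus the single horizontal direction $\R g$. So no choice of partition curves can produce three independent horizontal vector fields; the two missing horizontal directions must come from brackets, and they do because definite quadratic forms linearly generate all quadratic forms on a Lagrangian. The paper obtains the description of $E_v$ not by testing six chosen curves at a generic point, but by computing the symplectic orthogonal $E_v^\omega=\{w: Dq_\gamma(v)(w)=0\ \forall\gamma\}$; the key trick is that the conditions for the standard curve and all its images under the mapping class group force the identity $\det(v(x),w(y))=\det(v(y),w(x))$ for all $x,y\in H_1(\Sigma,\R)$, via Zariski density of $\Sp(4,\Z)$ in $\Sp(4,\R)$. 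This yields $E_v$ at \emph{every} point of $C^\croix$, with no genericity assumption.

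Consequently your ``main obstacle'' is a red herring: the degeneracy locus of the linearized distribution is the aligned locus $C\setminus C^\croix$ (rank-one maps $v\colon H_1(\Sigma,\R)\to\C$, i.e.\ non-Lagrangian kernels where the $T^*_+\mathcal{L}$ picture breaks down), not the image of the pinched configurations --- Euclidean limits of pinched sextuples are perfectly regular points of $C^\croix$ where the distribution is still bracket-generating. Pinched configurations enter only through the reduction of your first paragraph: they are excluded from $U$ so that Theorem \ref{SexMinimal} applies and the orbit actually reaches the neighbourhood of $[\rho_0]$. If you replace the generic-point computation by the orthogonal-complement characterization (or verify your six curves work on all of $C^\croix$, not just generically), the proof closes along the lines you set up.
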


By construction, the distribution is $\mcg(\Gamma_o)$-invariant, and by
Theorem \ref{SexMinimal}, the $\mcg(\Gamma_o)$-orbit of any point
in $U$ is adherent to the singular
representation $[\rho_0]$.
Hence it is sufficient to prove Proposition \ref{prop_nonintegrable} by
replacing $U$ with any neighbourhood of $[\rho_0]$. This will be a significant
simplification for two reasons:
\begin{enumerate}
\item For any partition curve $\gamma$, $\rho(\gamma)$ is elliptic for
$\rho$ close enough to $\rho_0$,
hence the vector fields $X_\gamma$ will not vanish close to the
singular representation.
\item Being completely non-integrable is an open condition, hence, we can
replace the functions $\Theta_\gamma$ by their Taylor expansion around
$[\rho_0]$ and reduce the problem to (symplectic) linear algebra. 
\end{enumerate}

The proof of Proposition \ref{prop_nonintegrable} is decomposed into two
subsections: in Subsection \ref{taylor} we compute the Taylor expansion of
the trace function associated to a partition curve and in Subsection
\ref{generating}, we show that the derivatives of these trace functions
generate the cotangent space around $[\rho_0]$.

To conclude this subsection, we recall the argument showing that
Proposition \ref{prop_nonintegrable} implies the ergodicity of $\mcg(\Gamma_o)$
on $X_0$. Let $f:X_0\to \R$ be a measurable invariant function.
Using standard ergodicity arguments (see Proposition 5.4 in
\cite{GoldmanXia11}), for any partition curve $\gamma$, there is a
measure $0$ subset $\mathcal{N}_\gamma\subset X_0\times \mathbb{R}$
such that $f(\Phi_\gamma^t(z))=f(z)$ for all $(z,t)\notin\mathcal{N}_\gamma$. 
Using Fubini theorem and the fact that the flows preserve nullsets,
for any partition curves $\gamma_1,\ldots,\gamma_n$, we will have
$f\circ\Phi_{\gamma_1}^{t_1}\circ\cdots\Phi_{\gamma_n}^{t_n}(z)=f(z)$
for almost all $(z,t_1,\ldots,t_n)\in X_0\times \mathbb{R}^n$.

Using cutoff functions one can smoothen the vector fields $X_\gamma$ without changing the
distribution $\mathcal{D}$ - hence we suppose that the vector fields are smooth from now.
Let $z\in X_0$ be a point in $U$. By the orbit theorem (see \cite{Jurdjevic}, Theorem 1 p.33), 
the orbit $N$ of $z$ through the action of the flows of $X_\gamma$ is a submanifold. By Proposition \ref{prop_nonintegrable}, 
the vector fields $X_\gamma$ and their brackets evaluated at $z$ generate $T_zX_0$. Hence $N$ is an 
open subset of $X_0$. Moreover the proof of the orbit theorem in \cite{Jurdjevic} shows that for any $z'\in N$ there 
exist $n\in\N$, curves $\gamma_1,\ldots,\gamma_n$ and $(t_1^0,\ldots,t^0_n)\in\R^n$ such that the map $F:\R^n\to N$ defined by 
\[F(t_1,\ldots,t_n)=\Phi_{\gamma_1}^{t_1}\circ\cdots \circ\Phi_{\gamma_n}^{t_n}(z)\]
satisfies $F(t^0_1,\ldots,t^0_n)=z'$ and $\operatorname{rank} DF(t^0_1,\ldots,t^0_n)=\dim N = 6$. 
From the fact that $f$ is almost constant in the image of $F$, we get that $f$ is almost everywhere constant in a neighborhood of $z'$, hence in $N$, a neighborhood of $z$. 

By Proposition \ref{prop_nonintegrable}, this
argument works for any point in the connected set $U$ which has full
measure, showing that $f$ is almost everywhere constant.

\begin{remark}
In the spirit of Section \ref{SectionAlgo} we can imagine a proof
by hand of the transitivity of the flows $\Phi_\gamma^t$. This proof
of Theorem \ref{SexErgodique} would be slightly more direct but less
informative about the structure of these hourglass representations,
and we chose not to develop it here.
\end{remark}

\subsection{Taylor expansion of trace functions}\label{taylor}
The set 
\[\widetilde{\Sex}=\{\rho:\pi_1(S_p)\to \SLdeuxR\textrm{ such that }
\tr\rho(c_i)=0\,\textrm{ for } i=1,\ldots,6\}.\]
yields a regular covering of $X_0$, which is contractible as we proved
in Section \ref{topsab1}. Thus, we may choose once for all a lift
$\tilde{\rho_0}$ of the singular representation, and lift every
representation $\rho$ accordingly.
With this setting, for any partition curve $\gamma$, we set $F_\gamma([\rho])=
\tr \tilde{\rho}(\gamma)$. This is a continuous function on $X_0$, smooth
on $X^\croix_0$ and which vanishes at $[\rho_0]$. Our purpose is to
compute its Taylor expansion at~$[\rho_0]$.

\begin{proposition}\label{prop_taylor}
For any partition curve $\gamma\subset S_p$, let
$\tilde{\gamma}\subset \Sigma$
be its (separating) pre-image in
$\Sigma$. Then $\Sigma$ can be written as $\Sigma'
\cup_{\tilde{\gamma}}\Sigma''$.
Write $\xi\in C$ as $\xi=\xi'+\xi''$ using the decomposition $H^1(\Sigma,\C)
=H^1(\Sigma',\C)\oplus H^1(\Sigma'',\C)$ and set $q_\gamma(\xi)=q(\xi')$,
then in the chart given by Proposition~\ref{morse} we have 
\[F_{\gamma}(\xi)=\pm 8 q_\gamma(\xi)+o(|\xi |^2).\]
\end{proposition}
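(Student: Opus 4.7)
The plan is to compute the Taylor expansion of $F_\gamma([\rho])=\tr\tilde\rho(\gamma)$ at the singular representation directly, using the Morse chart from Proposition~\ref{morse}. After relabeling the punctures if necessary, we may assume that $\gamma$ encloses $\{p_1,p_2,p_3\}$, so $\gamma$ is conjugate in $\pi_1(S_p)$ to $c_1c_2c_3$, and by the right-to-left multiplication convention for $\PSL$ we have $F_\gamma([\rho])=\tr\bigl(\tilde\rho(c_3)\tilde\rho(c_2)\tilde\rho(c_1)\bigr)$. In the chart $f$, at first order we have $\rho(c_i)=\exp(\xi_i)s_0$ with $\xi_i=\begin{pmatrix}0 & z_i\\ \bar z_i & 0\end{pmatrix}$ and $s_0=\begin{pmatrix}i & 0\\0 & -i\end{pmatrix}$; since the second-order Taylor coefficient of $F_\gamma$ at $0$ only depends on this first-order identification (because $d_0 f$ is the identity on tangent spaces), the higher-order corrections to the chart contribute only at order $O(|\xi|^3)$.

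First I would simplify the product of matrices using two elementary identities: $s_0\xi s_0^{-1}=-\xi$ for anti-diagonal $\xi$, and $s_0^2=-I$ in $\SLdeuxR$. Pulling every $s_0$ through to the right yields
\[
\tilde\rho(c_3)\tilde\rho(c_2)\tilde\rho(c_1)
= \exp(\xi_3)s_0\exp(\xi_2)s_0\exp(\xi_1)s_0
=-\exp(\xi_3)\exp(-\xi_2)\exp(\xi_1)\,s_0.
\]
I would then expand the product of the three exponentials by BCH to order two and multiply by $s_0$. The zeroth-order term $\tr s_0=0$ recovers $F_\gamma([\rho_0])=0$. The first-order term is $\tr\bigl((\xi_1-\xi_2+\xi_3)s_0\bigr)$, which vanishes since $\xi_i$ is off-diagonal while $s_0$ is diagonal. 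At second order the diagonal contributions $\tr(\xi_i^2 s_0)$ also vanish since $\xi_i^2=|z_i|^2 I$ and $\tr s_0=0$; only the three cross terms survive, and a direct computation gives $\tr(\xi_i\xi_j s_0)=-2\im(z_i\bar z_j)$.

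Collecting these contributions, the second-order coefficient is a linear combination of $\im(z_i\bar z_j)$ for $i,j\in\{1,2,3\}$, which in terms of the Euclidean symplectic form $\omega(z,w)=\im(\bar z w)$ is a multiple of $\omega(z_1,z_2)+\omega(z_2,z_3)+\omega(z_3,z_1)=2\operatorname{Area}(z_1,z_2,z_3)$. To conclude, I would match this signed area with $q_\gamma(\xi)$. Using that $V'\subset V$ corresponding to $H^1(\Sigma',\C)$ is the two-dimensional subspace spanned by $(1,1,0,0,0,0)$ and $(0,1,1,0,0,0)$, and that the decomposition $V=V'\oplus V''$ is orthogonal with respect to $h$ (since $\tilde\gamma$ separates $\Sigma$), a direct calculation gives $q(v')=\operatorname{Area}(z_1,z_2,z_3)$ for any representative $(z_1,\ldots,z_6)\in V$ satisfying $\sum(-1)^iz_i=0$. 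This yields a formula $F_\gamma(\xi)=c\cdot q_\gamma(\xi)+o(|\xi|^2)$ with an explicit constant $c$.

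The main obstacle is the careful bookkeeping of the numerical factor~$\pm 8$: factors of two arise from several sources that must be reconciled, namely the factor relating the cocycle parametrisation $\xi_i$ to the half-turn centres $\tanh(|z_i|/2)e^{i\arg z_i}$ used in the Euclidean model, the factor~$\frac{1}{2}$ built into the Area form, and the passage between $V\simeq H^1(\Gamma_o,\epsilon)$ and $H^1(\Sigma,\C)$ via the formula $z_i=\lambda_i+\lambda_{i+1}$. The sign $\pm$ is a genuine dichotomy: reversing the orientation of $\gamma$ amounts to replacing $\gamma$ by the curve enclosing $\{p_4,p_5,p_6\}$, which by the relation $c_1\cdots c_6=1$ in $\Gamma_o$ changes both $F_\gamma$ and $q_\gamma$ by an overall sign, and the sign also depends on which side one calls $\Sigma'$ versus $\Sigma''$.
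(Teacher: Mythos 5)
Your proposal follows essentially the same route as the paper: reduce to $\gamma=c_1c_2c_3$ by the mapping class group action, use $s_0\xi s_0^{-1}=-\xi$ and $s_0^2=-1$ to rewrite the product as $-\exp(\xi_3)\exp(-\xi_2)\exp(\xi_1)s_0$, expand by Baker--Campbell--Hausdorff to second order, observe that only the cross terms $\tr(\xi_i\xi_j s_0)=-2\im(z_i\overline{z_j})$ survive, and identify the resulting alternating sum with the area form, hence with $q_\gamma$ via $H^1(\Sigma',\C)$. The only point you leave open is the final evaluation of the constant as $\pm 8$, but you correctly locate all the sources of the numerical factors, and your explicit justification that the higher-order corrections of the Morse chart do not affect the second-order Taylor coefficient is a point the paper passes over in silence.
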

\begin{proof}
Remark that as $\xi$ is in $C$, we have $q(\xi)=q(\xi')+q(\xi'')=0$, hence we can
replace $\xi'$ with $\xi''$ in this formula. 
Up to the action of $\mcg(\Sigma)$ we can suppose that $\gamma=c_1c_2c_3$.
The same direct computation as in the proof of
Proposition~\ref{morse} then gives
\begin{eqnarray*}
\tr \tilde{\rho}(c_1)\tilde{\rho}(c_2)\tilde{\rho}(c_3)
&=&\pm\tr(e^{\xi_1}e^{-\xi_2}e^{\xi_3}s_0)
=\pm\frac{1}{2}\tr s_0([\xi_1,\xi_3]-[\xi_1,\xi_2]-[\xi_2,\xi_3])+o(|\xi|^2)\\
&=&\pm\!\!\!\sum_{1\le j<k\le 3}\!\!(-1)^{j+k}(z_j\overline{z}_k-z_k\overline{z}_j)
+o(|\xi|^2)
=\pm 8 q(z_1,z_2,z_3,0,0,0)+o(|\xi|^2),
\end{eqnarray*}
with the same notation.
\end{proof}

The computation here relates the splitting of $q$ into two
terms with the orthogonal decomposition of the cohomology space. We already saw 
this splitting in the first definition of $X_{\mathcal E}$ where the quadratic constraint appeared 
as a sum of two areas.

\subsection{Generating the cotangent space}\label{generating}

Let us go back to the proof of Proposition \ref{prop_nonintegrable}. We
recall that it amounts to proving that the Hamiltonian vector fields
$X_\gamma$ of the functions $F_\gamma$ generate a completely
non-integrable distribution close enough to the singular configuration.
Using Proposition \ref{prop_taylor} and the fact that being completely
non-integrable is an open condition, it reduces to proving the following
proposition:

\begin{proposition}\label{prop_generating}
For any partition curve $\gamma$, let  $Y_\gamma$ be the Hamiltonian
vector field of the function $q_\gamma$. For any $z\in X^\croix_\mathcal{E}$,
set 
\[E_z=\Span\{Y_\gamma(z),\gamma\textrm{ partition curve}\}\subset T_z
X^\croix_\mathcal{E}.\]
Then, $E$ is a completely non-integrable distribution on
$X^\croix_\mathcal{E}$.
\end{proposition}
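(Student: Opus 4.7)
Since each $q_\gamma$ is a real quadratic form on $V$, its Hamiltonian vector field $Y_\gamma$ is linear, and the Poisson bracket of two quadratic forms is again a quadratic form. It follows that every iterated Lie bracket of the $Y_\gamma$'s is the Hamiltonian vector field of an iterated Poisson bracket of the $q_\gamma$'s. Let $\mathfrak{h}$ denote the Lie subalgebra generated, under the Poisson bracket, by the ten forms $\{q_\gamma\}$. Through the natural isomorphism between the Poisson algebra of real quadratic forms on $V$ and $\mathfrak{sp}(V,\omega)\simeq\mathfrak{sp}(8,\R)$, $\mathfrak{h}$ embeds as a Lie subalgebra, and Proposition \ref{prop_generating} reduces to showing that for every $z\in X^\times_{\mathcal E}$ the evaluation map $\mathrm{ev}_z:\mathfrak{h}\to T_z X^\times_{\mathcal E}$, $Q\mapsto X_Q(z)$, is surjective.

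\textbf{Symmetry reduction to a single point.} The group $\Sp(H_1(\Sigma,\R))=\Sp(4,\R)$ acts on $V=H^1(\Sigma,\C)$ $\C$-linearly through its action on $H_1(\Sigma,\R)$, preserves $\omega$, and through $\Lambda$ acts on $X^\times_{\mathcal E}\simeq T^*_+\mathcal L$; this action is transitive, because it is transitive on $\mathcal L$ and the stabiliser $GL(L)$ is transitive on positive definite quadratic forms on $L^*$. Now $\mcg(\Sigma)$ acts on $V$ via $\Sp(4,\Z)\hookrightarrow\Sp(4,\R)$, whose image is Zariski dense, and permutes the ten partition curves (all separating simple closed curves of a genus-two surface lying in a single $\mcg$-orbit), hence permutes the $q_\gamma$'s, hence stabilises $\mathfrak{h}$ under conjugation. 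By Zariski density, $\mathfrak h$ is $\Sp(4,\R)$-invariant. A short diagram chase then shows that surjectivity of $\mathrm{ev}_z$ is an $\Sp(4,\R)$-invariant condition in $z$, so by transitivity it suffices to establish it at one well-chosen base point $z_0$.

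\textbf{Verification at a base point.} I would choose a symplectic basis $a_1,b_1,a_2,b_2$ of $H_1(\Sigma,\R)$ adapted to the partition curve $\gamma_0$ bounding $\{p_1,p_2,p_3\}$, so that $H_1(\Sigma')=\Span(a_1,b_1)$ and $H_1(\Sigma'')=\Span(a_2,b_2)$, and take $z_0$ to correspond under $\Lambda$ to the Lagrangian $\Span(a_1,a_2)$ equipped with the canonical positive form. In an adapted symplectic basis, $q_\gamma(u)=\tfrac14\im(u(a)\overline{u(b)})$ with $a,b$ a symplectic basis of $H'_\gamma$, so that the ten vectors $Y_\gamma(z_0)\in V$ and the Hamiltonian vector fields at $z_0$ of a handful of Poisson brackets $\{q_{\gamma_1},q_{\gamma_2}\}$ can be listed explicitly. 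A direct linear-algebra computation then shows that six linearly independent vectors in $T_{z_0}X^\times_{\mathcal E}$ arise, establishing non-integrability at $z_0$ and hence, by Step 2, on all of $X^\times_{\mathcal E}$.

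\textbf{Main obstacle.} The crux is the base-point computation: there is no a priori structural reason why the first layer of Poisson brackets of only ten quadratic forms should suffice to fill a six-dimensional tangent space, so one may need to iterate the bracket operation two or three times and keep careful track of which new directions appear. A conceptually lighter but heavier route would be to decompose $\mathfrak{sp}(V,\omega)$ as an $\Sp(H_1,\R)$-module, identify the irreducible components in which $\Span\{q_\gamma\}$ sits, and show that $\mathfrak h$ contains the image of $\mathfrak{sp}(H_1,\R)\hookrightarrow\mathfrak{sp}(V,\omega)$ (or of $\mathfrak u(V,h)=\mathfrak u(2,2)$), making transitivity automatic.
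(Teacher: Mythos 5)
Your reduction to linear algebra is sound as far as it goes: the $q_\gamma$ are $S^1$-invariant quadratic forms, their Hamiltonian fields are linear and descend to the symplectic quotient, iterated brackets correspond to iterated Poisson brackets inside $\mathfrak{sp}(V,\omega)$, and the $\Sp(4,\R)$-homogeneity of $T^*_+\mathcal{L}$ together with Zariski density of $\Sp(4,\Z)$ legitimately reduces the problem to surjectivity of the evaluation map at a single base point. But the proposition is precisely the assertion that this evaluation map is surjective, and you do not establish it: the ``direct linear-algebra computation'' is never performed, and you yourself flag it as the main obstacle, conceding that you do not know how many bracket layers are needed or which directions appear. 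As written, the argument proves nothing beyond a reformulation. A secondary but consequential error: there are not ten forms $q_\gamma$. A partition curve is an isotopy class of curve on the six-punctured sphere, and distinct isotopy classes inducing the same partition of the punctures generally induce distinct splittings $H^1(\Sigma,\C)=H^1(\Sigma')\oplus H^1(\Sigma'')$, hence distinct quadratic forms; the set $\{q_\gamma\}$ is infinite (it contains a full $\Sp(4,\Z)$-orbit of splittings). Working with only ten forms would change both the distribution $E_z$ and the difficulty of your base-point computation.

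The paper's route avoids the generation problem entirely by exploiting this infinitude at first order. It computes the symplectic orthogonal $E_v^\omega=\bigcap_\gamma\ker D_vq_\gamma$: the vanishing of $D_vq_\gamma(w)$ for one splitting gives one determinantal identity, and pushing it around by $\Sp(4,\Z)$ and invoking Zariski density yields the identity $\det(v(x),w(y))=\det(v(y),w(x))$ for all $x,y\in H_1(\Sigma,\R)$ (Lemma~\ref{identity}). This pins down $E_v^\omega$ as an explicit $2$-dimensional space and hence $E_v$ as the explicit $4$-dimensional distribution $d\pi^{-1}(\R g)$ on $T^*_+\mathcal{L}$; complete non-integrability then follows from a soft geometric fact (positive definite quadratic forms linearly span all quadratic forms on $L^*$), with no need to compute any brackets of the $q_\gamma$ at all. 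If you want to salvage your approach, the honest path is to either carry out the bracket computation explicitly at your base point, or prove your proposed structural claim that $\mathfrak{h}$ contains the image of $\mathfrak{sp}(H_1(\Sigma,\R),\R)$ in $\mathfrak{sp}(V,\omega)$ — but either of these is the entire content of the proposition, not a detail to be deferred.
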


\begin{proof}
First, we recall that through the symplectic isomorphism
$TX^\croix_\mathcal{E}\simeq T^*X^\croix_\mathcal{E}$,
the vector field $Y_\gamma$ corresponds to the
covector $dq_\gamma$. Hence, the symplectic orthogonal $E^\omega$
to the distribution $E$ has the following description: 
\[E^\omega_z=\{w\in T_z X_\mathcal{E}^\croix, D_zq_\gamma(w)=0, \forall \gamma
\text{ partition curve in }S_p\}.\]

We will use the model $X^\croix_\mathcal{E}=C^\croix/S^1$
where $C^\croix$ is the set of
surjective maps $v:H_1(\Sigma,\R)\to\C$ such that $q(v)=h(v,v)=0$,
with $h(v,w)=-\frac{i}{4}v\cdot\overline{w}$.
Recall that a vector $w$ is tangent to $C$ at $v\neq 0$ if
$\re h(v,w)=0$.

\begin{lemma}\label{identity}
Let $v\in C^\times$ and let $w\in V$ be tangent to $C$ at $v$ and satisfy
$Dq_\gamma(v)(w)=0$ for any partition curve $\gamma$. Then $v$ and $w$,
as linear maps from $H_1(\Sigma,\R)$ to $\R^2$, satisfy
the following equation:
\begin{equation}\label{relation}
\forall x,y\in H_1(\Sigma,\R),\quad \det(v(x),w(y))=\det(v(y),w(x)).
\end{equation}
\end{lemma}
\begin{proof}
First, observe that for any symplectic basis $a_1,b_1,a_2,b_2$ of
$H_1(\Sigma,\R)$ we have
\[-4\re h(v,w)=\det(v(a_1),w(b_1))-
\det(v(b_1),w(a_1))+\det(v(a_2),w(b_2))-\det(v(b_2),w(a_2)).\]

Now, for any partition curve $\gamma$, let
$p_\gamma \in\textrm{End}(H^1(\Sigma,\C))$ be the ($h$-orthogonal)
projection on $H^1(\Sigma',\C)$ parallel to $H^1(\Sigma'',\C)$;
note that $\re h(p_\gamma(\cdot),\cdot)$ is a symmetric bilinear form,
associated to~$q_\gamma$. Hence the condition
$Dq_\gamma(v)(w)$ is equivalent to $\re h(p_\gamma(v),w)=0$. If we
choose $x=a_1$ and $y=b_1$ this condition is equivalent to Equation
\eqref{relation}.

The same holds if we replace $(x,y)$ with its image by any transformation
in Sp$(4,\Z)$. So the map Sp$(4,\R)\to \R$ sending $g$ to
$\det(v(gx),w(gy))-\det(v(gy),w(gx))$ vanishes on Sp$(4,\Z)$.  By the
Zariski-density of $\Sp(4,\Z)$ in $\Sp(4,\R)$, this forces
Equation \eqref{relation} to hold for any
$x,y\in H_1(\Sigma,\R)$ such that $x\cdot y=1$. By scaling $x$ or $y$, this
holds finally for any $x$ and $y$ and the lemma is proved. 
\end{proof}

Fix $v:H_1(\Sigma,\R)\to\C$ surjective and satisfying $q(v)=0$. Recall from
Subsection \ref{lagrangian} that its kernel $L$ has to be Lagrangian. Let $w$
be in $E_v^\omega$ : Lemma \ref{identity} implies that $w$ vanishes on $L$.
Hence, $f=w\circ v^{-1}$ is a well defined endomorphism of $\R^2$ and writing
$x=v^{-1}(x')$ and $y=v^{-1}(y')$ we get $\det(f(x'),y')+\det(x',f(y'))=0$.
Hence, $f$ preserves infinitesimally the form $\det$, or equivalently has
vanishing trace. On the other hand, the equation $\re h(v,w)=0$ is automatically
satisfied in the preceding conditions. To sum up we have shown that the
orthogonal distribution $E_v^\omega$ is the following 2-dimensional space:
\[E_v^{\omega}=\{w\in \Hom(H_1(\Sigma,\R),\C), w|_{\ker v}=0,
\tr(w\circ v^{-1})=0\}/\R iv.\]

If $u\in \Hom(H_1(\Sigma,\R),\C)$ satisfies $u|_L=0$ then $\re h(u,v)=0$.
All such maps form a Lagrangian containing $E_v^\omega$. It follows that
$E_v$ contains this 3-dimensional space. This implies that one can express
that $u$ belongs to $E_v$ by looking at the restriction of $u$ to $L$. 

Let us show that one has the following description:
\begin{equation}\label{eq_distrib}
E_v=\{u\in \Hom(H_1(\Sigma,\R),\C)\text{ s.t. }\exists \lambda\in \R,\,
u|_L=\lambda (v^*)^{-1}\}.
\end{equation}
In this formula, we see $v$ as a map $H_1(\Sigma,\R)/L\to\C$ and identify
$H_1(\Sigma,\R)/L$ with $L^*$ via the intersection form. 
Consider an adapted symplectic basis $a_1,b_1,a_2,b_2$ of
$H_1(\Sigma,\R)$.
An element $w$ in $E_v^\omega$ vanishes on $a_1,a_2$
and its  matrix $M=(w(b_1),w(b_2))$ has trace $0$. Let $u$ be in $E_v$
and set $N=(u(a_1),u(a_2))$. From the expression of symplectic structure
given by equation \eqref{symp_expression}, we derive that the equation
$\tr MN=0$ should be satisfied for all $M$ with 0 trace. Hence $N$ is a
scalar matrix and the formula is proved. 

This defines a 4-dimensional distribution on $C^\croix/S^1$. It remains to show
that it is completely non-integrable which is the content of the last lemma. 
\end{proof}
\begin{lemma}
Let $\mathcal{L}$ be the Grassmanian of all Lagrangian subspaces of
$H_1(\Sigma,\R)$. Recall the identification $X_\mathcal{E}^\croix:C^\croix/S^1=
T^*_+\mathcal{L}$ and denote by $\pi:T^*\mathcal{L}\to\mathcal{L}$ the
standard projection.
 
Consider the exact sequence 
\[0\to T^*_L \mathcal{L}\to T_{(L,g^{-1})} T^*\mathcal{L}\overset{d\pi}\to
T_L\mathcal{L}\to 0.\]
The distribution $E$ corresponds to the distribution
$F_{(L,g^{-1})}=d\pi^{-1}(\R g)$  and is completely non-integrable.
\end{lemma}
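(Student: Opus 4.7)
\medskip

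\noindent\textbf{Proof plan.} The lemma has two parts: (i) identifying $E$ with $F_{(L,g^{-1})}=d\pi^{-1}(\R g)$ under the symplectomorphism $\Lambda$, and (ii) checking that the distribution $F$ is completely non-integrable on $T^*_+\mathcal{L}$. Part (i) is essentially unpacking the description already obtained: if $v\in C^\times$ has kernel $L$ and $u \in T_v V$ satisfies $u|_L = \lambda (v^*)^{-1}$, then differentiating $\Lambda(v)=(L,|v|^2)$ in the direction $u$ gives a tangent vector whose projection to $T_L \mathcal{L}$ is the quadratic form $x \mapsto 2\,\re\,v^{-1}(u(x))\cdot x = \lambda\, g(x)$, where $g$ is the metric on $L$ dual to $|v|^2$. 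Thus tangent vectors satisfying $u|_L = \lambda(v^*)^{-1}$ are precisely those lying over $\R g \subset T_L \mathcal{L}$, so $d\Lambda(E)=F$.

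For part (ii), I would work in the local coordinates $(p_1,p_2,p_3,q_1,q_2,q_3)$ introduced in the proof of the symplectomorphism property of $\Lambda$. Recall that at $L_p = \Span(e_1(p),e_2(p))$ the cotangent coordinate $q$ parametrises $g^{-1}$ by the matrix $M_q = \begin{pmatrix} q_1 & q_2 \\ q_2 & q_3 \end{pmatrix}$ in the basis $e_1^*,e_2^*$, while a tangent vector in $T_{L_p}\mathcal{L}$ with matrix $\begin{pmatrix}A&B\\B&C\end{pmatrix}$ in the basis $e_1,e_2$ corresponds to $A\partial_{p_1}+B\partial_{p_2}+C\partial_{p_3}$. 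Since $g = (g^{-1})^{-1}$ has matrix $M_q^{-1}=\frac{1}{\det M_q}\begin{pmatrix}q_3 & -q_2\\-q_2 & q_1\end{pmatrix}$ and $\det M_q \neq 0$ on $T^*_+\mathcal{L}$, the direction $\R g$ is spanned by the vector field
\[
\tilde H \;=\; q_3\,\partial_{p_1} - q_2\,\partial_{p_2} + q_1\,\partial_{p_3}.
\]
Thus $F$ is locally spanned by the four sections $\partial_{q_1},\partial_{q_2},\partial_{q_3}$ and $\tilde H$.

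The non-integrability then reduces to a one-line bracket computation. Directly,
\[
[\partial_{q_1},\tilde H]=\partial_{p_3},\qquad [\partial_{q_2},\tilde H]=-\partial_{p_2},\qquad [\partial_{q_3},\tilde H]=\partial_{p_1},
\]
so $F$ together with its first-order brackets already generates the whole tangent bundle $T(T^*\mathcal{L})$. Transporting back through $\Lambda$, the distribution $E$ satisfies the same bracket-generating condition on $X^\times_\mathcal{E}$, proving complete non-integrability everywhere on $T^*_+\mathcal{L}$ (equivalently on the open dense subset $U$ of $X_0$ once combined with Proposition~\ref{morse}).

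The only step that requires real care is part (i): the identification of the 4-dimensional subspace $E_v \subset T_{[v]}X^\times_\mathcal{E}$ coming from formula \eqref{eq_distrib} with the subspace $F$ of $T_{(L,g^{-1})}T^*\mathcal{L}$. Once the coordinate dictionary is in place, the rest is a transparent computation; the bracket step is immediate and in fact shows that $F$ is Hörmander of step~$2$.
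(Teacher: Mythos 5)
Your proposal is correct, and for the non-integrability part it takes a genuinely different route from the paper. The paper argues transitivity of the flows directly and coordinate-freely: since $F$ contains the whole fibre of $\pi$, one can first change $g$ within $T^*_L\mathcal{L}$, thereby changing the admissible horizontal direction $\R g$, and the problem reduces to the complete non-integrability of the cone $T_+\mathcal{L}\subset T\mathcal{L}$, which holds because definite quadratic forms linearly span all quadratic forms; transitivity is then converted back into non-integrability via the Frobenius/orbit theorem on the homogeneous space $T^*_+\mathcal{L}$. You instead reuse the explicit chart $(p_i,q_i)$ from the preceding lemma, identify $F$ as the span of $\partial_{q_1},\partial_{q_2},\partial_{q_3}$ and $\tilde H=q_3\partial_{p_1}-q_2\partial_{p_2}+q_1\partial_{p_3}$ (correct, since $g=g_q^{-1}$ is proportional to $\bigl(\begin{smallmatrix}q_3&-q_2\\-q_2&q_1\end{smallmatrix}\bigr)$ and $\det M_q>0$ on $T^*_+\mathcal{L}$), and verify the Hörmander condition at step~$2$ by a one-line bracket computation. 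Your version is more computational but arguably sharper: it directly produces the bracket-generating statement that Section~5 actually feeds into the orbit theorem, bypassing the equivalence between transitivity and non-integrability that the paper invokes. One small caveat: in part (i) your formula for the $T_L\mathcal{L}$-component of $d\Lambda(u)$, namely $x\mapsto 2\,\re\,v^{-1}(u(x))\cdot x$, mixes up the fibre derivative of $|v|^2$ (which is where the $2\re$ belongs) with the derivative of $L=\ker v$ (which is $x\mapsto -\omega\bigl(x,v^{-1}(u(x))\bigr)$ and is what is actually proportional to $g$ when $u|_L=\lambda(v^*)^{-1}$); the conclusion $d\Lambda(E)=F$ is nonetheless correct, and the paper itself dismisses this step as a ``simple transcription'' of its Equation for $E_v$.
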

\begin{proof}
We will show that the flows of the vector fields generated by elements of $E$
act transitively on $T^*_+\mathcal{L}$. By Frobenius theorem (see Theorem 4 
p. 45 in \cite{Jurdjevic}) and using the fact that $T^*_+\mathcal{L}$ is an homogeneous
space, this is equivalent to the complete non-integrability of $E$.

The identification between $E$ and $F$ is a simple transcription of
Equation \eqref{eq_distrib}: it remains to show that $F$ is completely
non-integrable.
Let $T_+\mathcal{L}\subset T\mathcal{L}$ be the cone of positive
directions. As $F$ contains all the directions of the fiber of $\pi$ one can
move from $(L,g^{-1})$ to all elements of the form $(L,(g')^{-1})$ for $g'$
close to $g$. Following the corresponding direction $g'$ in $\mathcal{L}$,
one can move from $L$ in $\mathcal{L}$ towards the direction of positive
quadratic forms, moving again along the fiber of $\pi$, the problem reduces
to show that the conical distribution $T_+\mathcal{L}$ is completely
non-integrable in $\mathcal{L}$ which is clear from the fact that definite
(i.e. positive or negative) quadratic forms linearly generate the space of
all quadratic forms. 
\end{proof}


\bibliographystyle{plain}

\bibliography{Biblio}

\begin{thebibliography}{10}

\bibitem{Bowditch}
B.~H. Bowditch.
\newblock Markoff triples and quasi-{F}uchsian groups.
\newblock {\em Proc. London Math. Soc. (3)}, 77(3):697--736, 1998.

\bibitem{Buser}
Peter Buser.
\newblock {\em Geometry and spectra of compact {R}iemann surfaces}.
\newblock Modern Birkh\"auser Classics. Birkh\"auser Boston Inc., Boston, MA,
  2010.
\newblock Reprint of the 1992 edition.

\bibitem{FunarMarche}
Louis Funar and Julien March{\'e}.
\newblock The first {J}ohnson subgroups act ergodically on {$\rm
  SU_2$}-character varieties.
\newblock {\em J. Differential Geom.}, 95(3):407--418, 2013.

\bibitem{GalloKapovichMarden}
Daniel Gallo, Michael Kapovich, and Albert Marden.
\newblock The monodromy groups of {S}chwarzian equations on closed {R}iemann
  surfaces.
\newblock {\em Ann. of Math. (2)}, 151(2):625--704, 2000.

\bibitem{GilmanMaskit}
J.~Gilman and B.~Maskit.
\newblock An algorithm for {$2$}-generator {F}uchsian groups.
\newblock {\em Michigan Math. J.}, 38(1):13--32, 1991.

\bibitem{Gilman}
Jane Gilman.
\newblock Two-generator discrete subgroups of {${\rm PSL}(2,{\bf R})$}.
\newblock {\em Mem. Amer. Math. Soc.}, 117(561):x+204, 1995.

\bibitem{Goldman84}
William~M. Goldman.
\newblock The symplectic nature of fundamental groups of surfaces.
\newblock {\em Adv. in Math.}, 54(2):200--225, 1984.

\bibitem{Goldman88}
William~M. Goldman.
\newblock Topological components of spaces of representations.
\newblock {\em Invent. Math.}, 93(3):557--607., 1988.

\bibitem{GoldmanXia11}
William~M. Goldman and Eugene~Z. Xia.
\newblock Ergodicity of mapping class group actions on {${\rm
  SU}(2)$}-character varieties.
\newblock In {\em Geometry, rigidity, and group actions}, Chicago Lectures in
  Math., pages 591--608. Univ. Chicago Press, Chicago, IL, 2011.

\bibitem{HeusenerPorti}
Michael Heusener and Joan Porti.
\newblock The variety of characters in {${\rm PSL}_2(\mathbb{C})$}.
\newblock {\em Bol. Soc. Mat. Mexicana (3)}, 10(Special Issue):221--237, 2004.

\bibitem{Jurdjevic}
Velimir Jurdjevic.
\newblock {\em Geometric control theory}, volume~52 of {\em Cambridge Studies
  in Advanced Mathematics}.
\newblock Cambridge University Press, Cambridge, 1997.

\bibitem{Lafontaine}
Jacques Lafontaine.
\newblock {\em An Introduction to Differentiable Manifolds}.
\newblock Springer, second edition, 2010.

\bibitem{Mod2}
Julien March\'e and Maxime Wolff.
\newblock The modular action on {${\rm PSL}(2,\mathbb{R})$}-characters in genus
  two.
\newblock {\em Duke Math. Journal}, to appear.

\bibitem{McDuffSalamon}
Dusa McDuff and Dietmar Salamon.
\newblock {\em Introduction to symplectic topology}.
\newblock Oxford Mathematical Monographs. The Clarendon Press, Oxford
  University Press, New York, second edition, 1998.

\bibitem{PreviteXia}
Joseph~P. Previte and Eugene~Z. Xia.
\newblock Topological dynamics on moduli spaces. {II}.
\newblock {\em Trans. Amer. Math. Soc.}, 354(6):2475--2494 (electronic), 2002.

\bibitem{Weil64}
Andr{\'e} Weil.
\newblock Remarks on the cohomology of groups.
\newblock {\em Ann. of Math. (2)}, 80:149--157, 1964.

\end{thebibliography}

\end{document}